\DeclareMathAlphabet{\mathbbm}{U}{bbm}{m}{n}
\begin{document}
\newtheoremstyle{all}
  {11pt}
  {11pt}
  {\slshape}
  {}
  {\bfseries}
  {}
  {.5em}
  {}


\theoremstyle{all}
\newtheorem{itheorem}{Theorem}
\newtheorem{theorem}{Theorem}[section]
\newtheorem{proposition}[theorem]{Proposition}
\newtheorem{corollary}[theorem]{Corollary}
\newtheorem{lemma}[theorem]{Lemma}
\newtheorem{assumption}[theorem]{Assumption}
\newtheorem{definition}[theorem]{Definition}
\newtheorem{ques}[theorem]{Question}
\newtheorem{conjecture}[theorem]{Conjecture}
\newtheorem{physics}[theorem]{Physics Motivation}

\theoremstyle{remark}
\newtheorem{remark}[theorem]{Remark}
\newtheorem{examplex}{Example}
\newenvironment{example}
  {\pushQED{\qed}\renewcommand{\qedsymbol}{$\clubsuit$}\examplex}
  {\popQED\endexamplex }
\renewcommand{\theexamplex}{{\arabic{section}.\roman{examplex}}}
\newcommand{\nc}{\newcommand}
\newcommand{\renc}{\renewcommand}
\numberwithin{equation}{section}
\renc{\theequation}{\arabic{section}.\arabic{equation}}

\newcounter{subeqn}
\renewcommand{\thesubeqn}{\theequation\alph{subeqn}}
\newcommand{\subeqn}{%
  \refstepcounter{subeqn}
  \tag{\thesubeqn}
}\makeatletter
\@addtoreset{subeqn}{equation}
\newcommand{\newseq}{%
  \refstepcounter{equation}
}
\nc{\excise}[1]{}

\nc{\op}{\operatorname{op}}
\nc{\Sym}{\operatorname{Sym}}
\nc{\Symt}{S}
\nc{\tr}{\operatorname{tr}}
\newcommand{\Mirkovic}{Mirkovi\'c\xspace}
\nc{\tla}{\mathsf{t}_\la}
\nc{\llrr}{\langle\la,\rho\rangle}
\nc{\lllr}{\langle\la,\la\rangle}
\nc{\K}{\mathbbm{k}}
\nc{\Stosic}{Sto{\v{s}}i{\'c}\xspace}
\nc{\cd}{\mathcal{D}}
\nc{\cT}{\mathcal{T}}
\nc{\vd}{\mathbb{D}}
\nc{\Fp}{{\mathbb{F}_p}}
\nc{\lift}{\gamma}
\nc{\cox}{h}
\nc{\Aut}{\operatorname{Aut}}
\nc{\R}{\mathbb{R}}
\renc{\wr}{\operatorname{wr}}
  \nc{\Lam}[3]{\La^{#1}_{#2,#3}}
  \nc{\Lab}[2]{\La^{#1}_{#2}}
  \nc{\Lamvwy}{\Lam\Bv\Bw\By}
  \nc{\Labwv}{\Lab\Bw\Bv}
  \nc{\nak}[3]{\mathcal{N}(#1,#2,#3)}
  \nc{\hw}{highest weight\xspace}
  \nc{\al}{\alpha}
  \nc{\gK}{K}
  \nc{\gk}{\mathfrak{k}}
  
\newcommand{\LLoc}{\mathbb{L}\!\operatorname{Loc}}
\newcommand{\Rsecs}{\mathbb{R}\Gamma_\bS}

\newlength{\dhatheight}
\newcommand{\doublehat}[1]{%
    \settoheight{\dhatheight}{\ensuremath{\hat{#1}}}%
    \addtolength{\dhatheight}{-0.35ex}%
    \hat{\vphantom{\rule{1pt}{\dhatheight}}%
    \smash{\hat{#1}}}}

\newcommand{\dgmod}{\operatorname{-dg-mod}}
  \nc{\be}{\beta}
  \nc{\bM}{\mathbf{m}}
  \nc{\Bu}{\mathbf{u}}

  \nc{\bkh}{\backslash}
  \nc{\Bi}{\mathbf{i}}
  \nc{\Bm}{\mathbf{m}}
  \nc{\Bj}{\mathbf{j}}
 \nc{\Bk}{\mathbf{k}}
  \nc{\Bs}{\mathbf{s}}
\newcommand{\bS}{\mathbb{S}}
\newcommand{\bT}{\mathbb{T}}
\newcommand{\bt}{\mathbbm{t}}

\nc{\hatD}{\widehat{\Delta}}
\nc{\bd}{\mathbf{d}}
\nc{\D}{\mathcal{D}}
\nc{\mmod}{\operatorname{-mod}}  
\nc{\AS}{\operatorname{AS}}
\newcommand{\red}{\mathfrak{r}}

\nc{\RAA}{R^\A_A}
  \nc{\Bv}{\mathbf{v}}
  \nc{\Bw}{\mathbf{w}}
\newcommand{\arxiv}[1]{\href{http://arxiv.org/abs/#1}{\tt arXiv:\nolinkurl{#1}}}
  \nc{\Hom}{\mathrm{Hom}}
  \nc{\im}{\mathrm{im}\,}
  \nc{\La}{\Lambda}
  \nc{\la}{\lambda}
  \nc{\mult}{b^{\mu}_{\la_0}\!}
  \nc{\mc}[1]{\mathcal{#1}}
  \nc{\om}{\omega}
\nc{\gl}{\mathfrak{gl}}
  \nc{\cF}{\mathcal{F}}
 \nc{\cC}{\mathcal{C}}
  \nc{\Mor}{\mathsf{Mor}}
  \nc{\HOM}{\operatorname{HOM}}

  \nc{\sHom}{\mathscr{H}\text{\kern -3pt {\calligra\large om}}\,}
  \nc{\Ob}{\mathsf{Ob}}
  \nc{\Vect}{\operatorname{-Vect}}
  \nc{\slhat}[1]{\mathfrak{\widehat{sl}}_{#1}}
  \nc{\sllhat}{\slhat{\ell}}
  \nc{\slnhat}{\slhat{n}}
    \nc{\slehat}{\slhat{e}}
    \nc{\slphat}{\slhat{p}}
   
 \nc{\sle}{\mathfrak{sl}_e}
    \nc{\Th}{\theta}
  \nc{\vp}{\varphi}
  \nc{\wt}{\mathrm{wt}}
\nc{\te}{\tilde{e}}
\nc{\tf}{\tilde{f}}
\nc{\hwo}{\mathbb{V}}
\nc{\soc}{\operatorname{soc}}
\nc{\cosoc}{\operatorname{cosoc}}
 \nc{\Q}{\mathbb{Q}}
\nc{\LPC}{\mathsf{LPC}}
  \nc{\Z}{\mathbb{Z}}
  \nc{\Znn}{\Z_{\geq 0}}
  \nc{\ver}{\EuScript{V}}
  \nc{\Irr}{\operatorname{Irr}}
  \nc{\Res}[2]{\operatorname{Res}^{#1}_{#2}}
  \nc{\edge}{\EuScript{E}}
  \nc{\Spec}{\operatorname{Spec}}
  \nc{\tie}{\EuScript{T}}
  \nc{\ml}[1]{\mathbb{D}^{#1}}
  \nc{\fQ}{\mathfrak{Q}}
        \nc{\fg}{\mathfrak{g}}
        
        \nc{\fh}{\mathfrak{h}}
        
        \nc{\ft}{\mathfrak{t}}
        \nc{\fm}{\mathfrak{m}}
  \nc{\Uq}{U_q(\fg)}
        \nc{\bom}{\boldsymbol{\omega}}
\nc{\bla}{{\underline{\boldsymbol{\la}}}}
\nc{\bmu}{{\underline{\boldsymbol{\mu}}}}
\nc{\bal}{{\boldsymbol{\al}}}
\nc{\bet}{{\boldsymbol{\eta}}}
\nc{\rola}{X}
\nc{\wela}{Y}
\nc{\fM}{\mathfrak{M}}
\nc{\tfM}{\mathfrak{\tilde M}}
\nc{\fX}{\mathfrak{X}}
\nc{\fH}{\mathfrak{H}}
\nc{\fE}{\mathfrak{E}}
\nc{\fF}{\mathfrak{F}}
\nc{\fI}{\mathfrak{I}}
\nc{\qui}[2]{\fM_{#1}^{#2}}
\nc{\cL}{\mathcal{L}}
\nc{\ca}[2]{\fQ_{#1}^{#2}}
\nc{\cat}{\mathcal{V}}
\nc{\cata}{\mathfrak{V}}
\nc{\catf}{\mathscr{V}}
\nc{\hl}{\mathcal{X}}
\nc{\hld}{\EuScript{X}}
\nc{\hldbK}{\EuScript{X}^{\bla}_{\bar{\mathbb{K}}}}
\nc{\Iwahori}{\mathrm{Iwa}}
\nc{\hE}{\mathfrak{E}^{(1)}}
\nc{\Eh}{\mathfrak{E}^{(2)}}
\nc{\hF}{\mathfrak{F}^{(1)}}
\nc{\Fh}{\mathfrak{F}^{(2)}}
\

\nc{\pil}{{\boldsymbol{\pi}}^L}
\nc{\pir}{{\boldsymbol{\pi}}^R}
\nc{\cO}{\mathcal{O}}
\nc{\Ko}{\text{\Denarius}}
\nc{\Ei}{\fE_i}
\nc{\Fi}{\fF_i}
\nc{\fil}{\mathcal{H}}
\nc{\brr}[2]{\beta^R_{#1,#2}}
\nc{\brl}[2]{\beta^L_{#1,#2}}
\nc{\so}[2]{\EuScript{Q}^{#1}_{#2}}
\nc{\EW}{\mathbf{W}}
\nc{\rma}[2]{\mathbf{R}_{#1,#2}}
\nc{\Dif}{\EuScript{D}}
\nc{\MDif}{\EuScript{E}}
\renc{\mod}{\mathsf{mod}}
\nc{\modg}{\mathsf{mod}^g}
\nc{\fmod}{\mathsf{mod}^{fd}}
\nc{\id}{\operatorname{id}}
\nc{\compat}{\EuScript{K}}
\nc{\DR}{\mathbf{DR}}
\nc{\End}{\operatorname{End}}
\nc{\Fun}{\operatorname{Fun}}
\nc{\Ext}{\operatorname{Ext}}
\nc{\Coh}{\operatorname{Coh}}
\nc{\tw}{\tau}
\nc{\second}{\tau}
\nc{\A}{\EuScript{A}}
\nc{\Loc}{\mathsf{Loc}}
\nc{\eF}{\EuScript{F}}
\nc{\eE}{\EuScript{E}}
\nc{\LAA}{\Loc^{\A}_{A}}
\nc{\perv}{\mathsf{Perv}}
\nc{\gfq}[2]{B_{#1}^{#2}}
\nc{\qgf}[1]{A_{#1}}
\nc{\qgr}{\qgf\rho}
\nc{\tqgf}{\tilde A}
\nc{\Tr}{\operatorname{Tr}}
\nc{\Tor}{\operatorname{Tor}}
\nc{\cQ}{\mathcal{Q}}
\nc{\st}[1]{\Delta(#1)}
\nc{\cst}[1]{\nabla(#1)}
\nc{\ei}{\mathbf{e}_i}
\nc{\Be}{\mathbf{e}}
\nc{\Hck}{\mathfrak{H}}
\renc{\P}{\mathbb{P}}
\nc{\bbB}{\mathbb{B}}
\nc{\ssy}{\mathsf{y}}
\nc{\cI}{\mathcal{I}}
\nc{\cG}{\mathcal{G}}
\nc{\cH}{\mathcal{H}}
\nc{\coe}{\mathfrak{K}}
\nc{\pr}{\operatorname{pr}}
\nc{\bra}{\mathfrak{B}}
\nc{\rcl}{\rho^\vee(\la)}
\nc{\tU}{\mathcal{U}}
\nc{\dU}{{\stackon[8pt]{\tU}{\cdot}}}
\nc{\dT}{{\stackon[8pt]{\cT}{\cdot}}}
\nc{\BFN}{\EuScript{R}}

\nc{\RHom}{\mathrm{RHom}}
\nc{\tcO}{\tilde{\cO}}
\nc{\Yon}{\mathscr{Y}}
\nc{\sI}{{\mathsf{I}}}
\nc{\sptc}{X_*(T)_1}
\nc{\spt}{\ft_1}
\nc{\Bpsi}{u}
\nc{\acham}{\eta}
\nc{\hyper}{\mathsf{H}}
\nc{\AF}{\EuScript{Fl}}
\nc{\VB}{\EuScript{X}}
\nc{\OHiggs}{\cO_{\operatorname{Higgs}}}
\nc{\OCoulomb}{\cO_{\operatorname{Coulomb}}}
\nc{\tOHiggs}{\tilde\cO_{\operatorname{Higgs}}}
\nc{\tOCoulomb}{\tilde\cO_{\operatorname{Coulomb}}}
\nc{\indx}{\mathcal{I}}
\nc{\redu}{K}
\nc{\Ba}{\mathbf{a}}
\nc{\Bb}{\mathbf{b}}
\nc{\Bc}{\mathbf{c}}
\nc{\Lotimes}{\overset{L}{\otimes}}
\nc{\AC}{C}
\nc{\rAC}{rC}\nc{\defr}{\operatorname{def}}

\nc{\rACp}{\mathsf{C}}
\nc{\ideal}{\mathscr{I}}
\nc{\ACs}{\mathscr{C}}
\nc{\Stein}{\mathscr{X}}
\nc{\pStein}{p\mathscr{X}}
\nc{\pSteinK}{\overline{\mathscr{X}}}
\nc{\No}{H}
\nc{\To}{Q}
\nc{\tNo}{\tilde{H}}
\nc{\tTo}{\tilde{Q}}
\nc{\flav}{\phi}
\nc{\tF}{\tilde{F}}
\newcommand{\cOg}{\mathcal{O}_{\!\operatorname{g}}}
\newcommand{\tcOg}{\mathcal{\tilde O}_{\!\operatorname{g}}}
\newcommand{\dOg}{D_{\cOg}}
\newcommand{\preO}{p\cOg}
\newcommand{\dpreO}{D_{p\cOg}}
\nc{\vertex}{\EuScript{V}(\Gamma)}
\nc{\Wei}{\EuScript{W}}
\setcounter{tocdepth}{2}
\newcommand{\thetitle}{RoCK blocks for affine categorical representations}
\newcommand{\theshorttitle}{RoCK blocks for affine categorical representations}
\renc{\theitheorem}{\Alph{itheorem}}

\excise{
\newenvironment{block}
\newenvironment{frame}
\newenvironment{tikzpicture}
\newenvironment{equation*}
}

\baselineskip=1.1\baselineskip
\definecolor{bead}{gray}{0.2}

 \usetikzlibrary{decorations.pathreplacing,backgrounds,decorations.markings,shapes.geometric,decorations.pathmorphing,calc}
\tikzset{wei/.style={draw=red,double=red!40!white,double distance=1.5pt,thin}}
\tikzset{awei/.style={draw=blue,double=blue!40!white,double distance=1.5pt,thin}}
\tikzset{bdot/.style={fill,circle,color=blue,inner sep=3pt,outer
    sep=0}}
    \tikzset{
    weyl/.style={decorate, decoration={snake}, draw=black!50!green,
      very thick}}
\tikzset{fringe/.style={gray,postaction={decoration=border,decorate,draw,gray, segment length=4pt,thick}}}
\tikzset{old/.style={gray,thin}}
\tikzset{dir/.style={postaction={decorate,decoration={markings,
    mark=at position .8 with {\arrow[scale=1.3]{>}}}}}}
\tikzset{rdir/.style={postaction={decorate,decoration={markings,
    mark=at position .8 with {\arrow[scale=1.3]{<}}}}}}
\tikzset{edir/.style={postaction={decorate,decoration={markings,
    mark=at position .2 with {\arrow[scale=1.3]{<}}}}}}\begin{center}
\noindent {\large  \bf \thetitle}
\medskip

\noindent {\sc Ben Webster}\footnote{Supported by NSERC through a Discovery Grant.  This research was supported in part by Perimeter Institute for Theoretical Physics. Research at Perimeter Institute is supported in part by the Government of Canada through the Department of Innovation, Science and Economic Development Canada and by the Province of Ontario through the Ministry of Colleges and Universities.}\\  
Department of Pure Mathematics, University of Waterloo \& \\
 Perimeter Institute for Theoretical Physics\\
Waterloo, ON\\
Email: {\tt ben.webster@uwaterloo.ca}
\end{center}
\bigskip
{\small
\begin{quote}
\noindent {\em Abstract.}
Given a categorical action of a Lie algebra, a celebrated theorem of Chuang and Rouquier proves that the blocks corresponding to weight spaces in the same orbit of the Weyl group are derived equivalent, proving an even more celebrated conjecture of Brou\'e for the case of the symmetric group.   

In many cases, these derived equivalences are $t$-exact and thus induce equivalences of abelian categories between different blocks.  We call two such blocks ``Scopes equivalent.''  In this paper, we describe how Scopes equivalence classes for any affine categorification can be classified by the chambers of a finite hyperplane arrangement,  which can be found through simple Lie theoretic calculations.  We pay special attention to the largest equivalence classes, which we call RoCK, and show how this matches with recent work of Lyle on Rouquier blocks for Ariki-Koike algebras. We also provide Sage code that tests whether blocks are RoCK and finds RoCK blocks for Ariki-Koike algebras. 
\end{quote}
}

\section{Introduction}
A remarkable theorem of Chuang and Rouquier \cite{chuangDerivedEquivalences2008}, proving a conjecture of Brou\'e, shows that any two blocks of of modules over $\mathbb{F}_pS_n$ and $\mathbb{F}_pS_m$ with the same defect group are derived equivalent.  The proof of this theorem runs through a remarkable fact about the theory of symmetric groups: that it is best understood in terms of the representation theory of the affine Lie algebra $\slphat$, as pointed out in the title of \cite{grojnowskiAffineMathfraksl1999}.  In fact,  Chuang and Rouquier prove a much more general theorem, of which Brou\'e's conjecture is a special case.

They use the notion of a categorical representation of $\mathfrak{sl}_2$ (a strong $\mathfrak{sl}_2$-categorification, in their terminology) on a category $\cC$.  This consists of the data:
\begin{enumerate}
\item a decomposition $\cC\cong \bigoplus_{n\in \Z}\cC_n$;
\item functors $\eE\colon \cC_{n}\to \cC_{n+2}$ and $\eF\colon \cC_{n}\to \cC_{n-2}$;
\item certain special natural transformations between compositions of these functors that force $[\eE],[\eF]$ to satisfy the relations of $\mathfrak{sl}_2$ at the level of the Grothendieck group, with $\cC_n$ corresponding to the weight $n$ space.   
\end{enumerate}
Chuang and Rouquier then prove that for any categorical $\mathfrak{sl}_2$-representation, we have an equivalence of derived categories $D^b(\cC_n)\cong D^b(\cC_{-n})$ categorifying the action of the unique element of the Weyl group. Their proof of the Brou\'e conjecture for the symmetric group proceeds by successively applying different such equivalences for the $\mathfrak{sl}_2$-actions given by $i$-induction and $i$-restriction for different $i\in \mathbb{F}_p$.

These equivalences of derived categories are sometimes $t$-exact, and sometimes not.  If the Chuang-Rouquier equivalence is $t$-exact, then it induces an equivalence of categories $\cC_{n}\cong \cC_{-n}$.  We call these {\bf Scopes equivalences} since in the case of the symmetric group, they recover the Morita equivalences described by Scopes \cite{scopesCartanMatrices1991}.  Using these equivalences, Scopes showed that only finitely many different abelian categories up to equivalence appear amongst the blocks of a given defect.  In fact, ``most'' blocks are equivalent as abelian categories.  The blocks in this class are called {\bf RoCK} (for Rouquier-Chuang-Kessar) or simply {\bf Rouquier}. 

Our purpose in this paper is to describe how, in direct analogy with Chuang and Rouquier's approach to the Brou\'e conjecture, the theory of Scopes equivalences and RoCK blocks extend immediately to all categorical modules over an affine Lie algebra $\fg$.  For any categorical module $\cC$ over an affine Lie algebra $\fg$, we let its {\bf support} be the set of weights $\mu$ such that $\cC_{\mu}\neq 0$; similarly, for a $\fg$-module, we let its support be the set of weights with nonzero weight spaces.

Let $\bar{\fg}$ denote the corresponding finite-dimensional Lie algebra, $\bar{\fh}$ its Cartan, and $W,\bar{W}$ the corresponding affine and finite Weyl groups.  For each choice of $\cC$ and dominant weight $\la$ in the support of $\cC$ with stabilizer $W_{\la}$, we define a finite hyperplane arrangement in $\bar{\fh}$ given by certain translates of coroot hyperplanes; the arrangement only depends on the support of $\cC$ as a set (and thus will be the same for categorical actions with the same support).  We will call the chambers of this arrangement {\bf Scopes chambers}.   For each Weyl chamber of $\bar{W}$, there is a unique Scopes chamber which contains a translate of this Weyl chamber, which we call its {\bf RoCK} chamber.

We call two weights $\la$ and $\la'$ in the support of $\cC$ {\bf Scopes equivalent} if there is a $t$-exact Chuang-Rouquier equivalence between them.   Note that this is stronger than requiring that the blocks are equivalent as abelian categories; there are examples where the corresponding blocks are equivalent as categories but not via a Chuang-Rouquier equivalence.  In particular, a $t$-exact Chuang-Rouquier equivalence between blocks of Ariki-Koike algebras will send Specht modules to Specht modules and thus preserve decomposition matrices, while as noted in \cite[\S 1]{dellarcipreteEquivalenceDecomposition2023}, there are examples of Morita equivalences between such blocks which do not preserve Specht modules.

In analogy with Scopes' results, we show:
\begin{itheorem}\label{th:main}
  Consider any categorical module $\cC$ over an affine Lie algebra $\fg$ whose support is equal to that of a simple highest weight module
  and any dominant weight $\mu$ in its support:
  \begin{enumerate}
    \item The orbit $\{w\mu\}_{w\in W}$ of $\mu$ under the affine Weyl group is the union of finitely many Scopes equivalence classes.
    \item These classes are in bijection with orbits of the stabilizer $W_{\mu}$ of $\mu$ in $W$ on the Scopes chambers.  
   In particular,  the Scopes equivalence classes of a categorical module only depend on its support. 
    \end{enumerate}  
\end{itheorem}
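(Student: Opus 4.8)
The plan is to reduce the theorem to a statement purely about the combinatorics of the affine Weyl group action on weights, and then to identify the Scopes equivalences among consecutive weights with the hyperplane-crossing structure. First I would recall the basic mechanism behind Chuang-Rouquier: a $t$-exact equivalence $\cC_\mu \cong \cC_{s_i\mu}$ arises whenever the relevant $\mathfrak{sl}_2$-string through $\mu$ in the $i$-direction has length at most one on one side, i.e.\ one of the two weights is an end of a short string; this is the categorical incarnation of Scopes' original ``$[w:k]$ small'' condition. So the combinatorial heart of the matter is: starting from a dominant $\mu$, which simple reflections $s_i$ (in the affine Weyl group $W$, acting on the weight lattice via the dot-shifted or level-shifted action appropriate to the support) connect $w\mu$ to a Scopes-equivalent weight, and how do chains of such reflections partition the orbit.

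Next I would set up the geometry. The orbit $W\mu$ is in bijection with $W/W_\mu$, and choosing a fundamental domain we may identify each $w\mu$ with an alcove (or with the image of the fundamental alcove) in $\bar{\fh}$ after quotienting by the lattice of translations that fixes $\mu$'s level — this is why only finitely many classes appear and why the picture lives in the finite-dimensional $\bar{\fh}$. The key step is then to show that the wall-crossing $w\mu \rightsquigarrow s w\mu$ is a Scopes equivalence precisely when the corresponding wall of the alcove is \emph{not} one of the distinguished translated coroot hyperplanes defining the Scopes arrangement; equivalently, crossing a non-Scopes wall keeps you in the same Scopes chamber and is always $t$-exact, while crossing a Scopes hyperplane is where strings get long and $t$-exactness can fail. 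This requires computing, for a weight $\mu$ whose support equals that of a simple highest weight module, exactly when the $i$-string has length $\le 1$; the support hypothesis is what makes this computation depend only on the support — the multiplicities in $\cC$ are irrelevant because $t$-exactness of the Chuang-Rouquier complex only sees the string lengths, which are read off from which weights lie in the support. I would prove this by a direct case analysis on the affine $\mathfrak{sl}_2$ inside $\fg$, using that the support of a simple highest weight module is saturated/convex in the appropriate sense.

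Having established that, part (1) follows: the Scopes arrangement has finitely many chambers, hence $W\mu$ meets finitely many equivalence classes. For part (2), I would show that two weights in the orbit are Scopes equivalent if and only if the alcoves representing them lie in the same Scopes chamber \emph{up to the residual $W_\mu$-action}: one direction is the wall-crossing statement just proved (a path between alcoves in the same chamber crosses only non-Scopes walls, giving a composite $t$-exact equivalence), and the other direction is an invariant that distinguishes chambers and is preserved by Scopes equivalences — here the natural candidate is that a Scopes equivalence, being built from short-string reflections, cannot change which side of a Scopes hyperplane you are on, so the Scopes chamber (mod $W_\mu$) is a genuine invariant. The final sentence of (2) is then immediate since the arrangement was defined using only the support.

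The main obstacle I anticipate is the reverse implication in part (2): showing that weights in \emph{different} $W_\mu$-orbits of Scopes chambers are genuinely \emph{not} Scopes equivalent. Producing the $t$-exact equivalences (one direction) is essentially bookkeeping on top of Chuang-Rouquier, but ruling them out requires a real invariant. I expect the right tool is to track a numerical invariant of the block — something like the ``weight'' or defect refined by residues, or the graded dimension / the shape of the $\mathfrak{sl}_2$-strings emanating from $\mu$ — and argue that a $t$-exact Chuang-Rouquier equivalence preserves it while it genuinely changes when one crosses a Scopes hyperplane in an essential (non-$W_\mu$) way. Getting a clean, checkable such invariant, and confirming it is insensitive to the $W_\mu$-action but sensitive to the chamber, is the delicate point; everything else is a fairly mechanical transfer of Scopes' and Chuang-Rouquier's symmetric-group arguments to the general affine categorical setting.
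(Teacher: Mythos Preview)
Your outline matches the paper's: define the arrangement via $N_\mu = \{\alpha \in \Delta^+ : \cC_{\mu+\alpha} \neq 0\}$, prove the key iff that a simple reflection gives a $t$-exact Chuang--Rouquier equivalence precisely when it does not cross one of these walls (Lemma~\ref{lem:Chuang-Rouquier}), and combine with the gallery argument that alcoves in the same chamber are connected by a path of simple reflections staying in the chamber (Lemma~\ref{lem:same-scopes}). Finiteness of $N_\mu$, and hence of the arrangement, is where the irreducible-support hypothesis is actually used.

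Two corrections. First, your $t$-exactness criterion is misstated: the condition is not that the $i$-string is ``short'' or of length at most one, but that $\cC_{\nu+\alpha_i} = 0$, i.e.\ $\nu$ is at the \emph{top} of the $i$-string (Lemma~\ref{lem:Chuang-Rouquier1}). The string from $\nu$ down to $s_i\nu$ can have any length $k = \alpha_i^\vee(\nu)$. The paper then transports this to the dominant weight via the Chuang--Rouquier derived equivalence $\cC_{\nu+\alpha_i} \simeq \cC_{\mu+w\alpha_i}$, which is cleaner than a case analysis on an embedded affine $\mathfrak{sl}_2$.

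Second, the ``main obstacle'' you anticipate is not one. You already gave the correct argument in your penultimate paragraph: Scopes equivalence is \emph{by definition} the relation generated by single-step $t$-exact $\Theta_i$'s, and your key iff says each such step keeps the alcove in its Scopes chamber, so the chamber (mod $W_\mu$) is tautologically a Scopes invariant. No auxiliary numerical invariant is needed. Your final paragraph seems to conflate ``Scopes equivalent'' with ``equivalent as abelian categories''; the paper explicitly does \emph{not} claim the latter is detected by chambers, and discusses situations where it fails (see the remarks following Definition~\ref{def:our-RoCK}).
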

   Under this bijection, certain equivalence classes (as many as $\# \bar{W}$) correspond to RoCK chambers. We call the weight spaces in these equivalence classes {\bf RoCK.} Part (1) of this theorem is proven in the case of Ariki-Koike algebras by Amara-Omari and Schaps \cite{amara-omariExternalVertices2021}; their proof uses the same ideas, but does not work out the combinatorics of Scopes chambers and leaves some important results implicit.  For example, our Lemma \ref{lem:Chuang-Rouquier1} is roughly equivalent to the proof of the Corollary to Proposition 3.2 in \cite{amara-omariExternalVertices2021}, but it is never stated there.  
   
   This definition is heavily inspired by work of Lyle on the case of Ariki-Koike algebras \cite{lyleRouquier2022} for the parameter $q$ a root of unity with quantum characteristic $e$, with the parameters $Q_j$ all lying in the set $q^{\Z}$.  Let $w_i$ be the multiplicity of $q^i$ in the multiset of $Q_j$'s.  The modules over the Ariki-Koike algebras $\oplus_{n}AK_{n}(\K,q,\Bw)\mmod$ categorify the simple module $V(\Lambda)=V(w_0\Lambda_0+\cdots+w_{e-1}\Lambda_{e-1})$ for $\slehat$.  In \S 3 of {\it loc.\ cit.}, Lyle defines a {\bf Rouquier block} for an Ariki-Koike algebra; this is a purely combinatorial property of a weight in the support of $V(\Lambda)$, so we can just as easily apply it to any other categorical module with this support.  Note that while ``Rouquier block'' and ``RoCK block'' are usually used synonymously, here we use them to distinguish Lyle's definition from ours.  
 
\begin{itheorem}\label{th:B}
For any categorical representation $\cC$ of $\fg=\slehat$ with support $V(\Lambda)$, the Scopes equivalence classes will coincide with those for the Ariki-Koike algebra, and a Scopes equivalence class is RoCK if and only if it contains a Rouquier weight.
\end{itheorem}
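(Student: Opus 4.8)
The first assertion is immediate from \cref{th:main}. As recalled above, $\bigoplus_{n}AK_{n}(\K,q,\Bw)\mmod$ is itself a categorical $\slehat$-module with support $V(\Lambda)$; since \cref{th:main}(2) shows that the decomposition of any affine Weyl orbit $W\mu$ in $\operatorname{supp}V(\Lambda)$ into Scopes equivalence classes depends only on the support, this decomposition — and the subset of it consisting of RoCK classes (those whose associated $W_\mu$-orbit of Scopes chambers contains a RoCK chamber) — agrees for $\cC$ and for the Ariki-Koike category. Thus the second assertion is really a statement purely about the weight combinatorics of $V(\Lambda)$, and it suffices to prove ``RoCK $\iff$ contains a Rouquier weight'' in that setting.

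The plan for this is to set up the standard dictionary between the combinatorics of $\ell$-multicharges and abacus displays on one side and the weight lattice of $\slehat$ together with the Scopes hyperplane arrangement on the other, and then match the two conditions. Concretely I would: (i) fix a weight $\nu=\Lambda-\beta$ of defect $d$ in the orbit $W\mu$ and record the explicit affine-linear formula, in terms of the runner positions of an abacus display of $\nu$, for the point $p(\nu)\in\bar\fh$ that determines the Scopes chamber of $\nu$; (ii) recall from \cite{lyleRouquier2022} that $\nu$ is a \emph{Rouquier weight} exactly when, after the appropriate normalization, the positions of consecutive runners are arranged in a fixed staircase pattern with consecutive gaps at least the defect $d$; and (iii) recall that the Scopes arrangement consists of the translates $\langle\al,\,\cdot\,\rangle=j$ of the coroot hyperplanes of $\bar\fg=\sle$ for $\al$ a positive root and $j$ in an explicit finite set of integers bounded in terms of $d$, so that the RoCK chamber attached to a Weyl chamber $\bar C$ of $\bar W$ is cut out by requiring, for every positive root $\al$, that $\langle\al,\,\cdot\,\rangle$ have the same sign as on $\bar C$ and absolute value larger than every $j$ occurring in the arrangement.

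The heart of the argument is then the verification that, under the dictionary of (i), Lyle's inequalities of (ii) say precisely that $p(\nu)$ lies in one of the (at most $\#\bar W$) RoCK chambers of (iii): both conditions are governed by the same quantity, the defect; the finitely many integer translates in the arrangement are exactly those relevant to Lyle's separation condition; and the $e!$ orderings of the runners that underlie the definition of a Rouquier block correspond to the $e!=\#\bar W$ choices of Weyl chamber $\bar C$ of $\bar W=S_e$. Granting this, ``$\Leftarrow$'' is immediate: a Rouquier weight $\nu$ has $p(\nu)$ in some RoCK chamber, hence its Scopes class is RoCK. For ``$\Rightarrow$'', if a Scopes class $\mathcal{S}$ is RoCK then the corresponding $W_\mu$-orbit of chambers contains a RoCK chamber $K$; since $K$ is an unbounded cone and the points $p(\nu)$ for $\nu\in W\mu$ form a finite union of cosets of a full-rank lattice in $\bar\fh$, there are weights $\nu\in W\mu$ with $p(\nu)$ in the open chamber $K$ arbitrarily far from all walls, any such $\nu$ lies in $\mathcal{S}$ and satisfies Lyle's inequalities, so $\mathcal{S}$ contains a Rouquier weight.

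The step I expect to be the main obstacle is the term-by-term matching in the previous paragraph: reconciling Lyle's purely combinatorial formulation — multipartitions, beta-numbers, abaci, the precise staircase pattern and the exact constant multiplying the defect — with the Lie-theoretic one, keeping careful track of which coroot values, which integer translates, and which signs are involved, and of the correspondence between runner orderings and Weyl chambers. One must also be careful about the logical form of the statement, as reflected in \cref{th:B}: being Rouquier is a property of an individual weight while being RoCK is a property of the whole Scopes class, so the ``$\Rightarrow$'' direction genuinely requires producing arbitrarily deep weights in the orbit rather than merely knowing that a RoCK chamber lies in the orbit. This is also the point at which the arguments left implicit in \cite{amara-omariExternalVertices2021} should be cross-checked against Lyle's definition.
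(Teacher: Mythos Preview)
Your handling of the first assertion is exactly the paper's.  For the second, your route is genuinely different from the paper's and contains a real gap.

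The paper does not attempt the direct combinatorial matching you propose.  Instead it argues via \emph{stretching}: it invokes Lyle's results (\cite[Thm.~3.10, Lem.~3.8, Thms.~3.20--3.21]{lyleRouquier2022}) to show that stretching a Rouquier block by a vector $\mathbf{M}$ with $M_1\ll\cdots\ll M_e$ and $\sum M_i=0$ is realised by a product of affine simple reflections each of which satisfies the highest-weight hypothesis of Lemma~\ref{lem:Chuang-Rouquier}, hence preserves the Scopes class; since a generic such stretching lands in a RoCK chamber by \eqref{eq:Scopes-walls}, this gives Rouquier $\Rightarrow$ RoCK.  For the converse, the explicit stretching construction with large parameters $u_i$ produces a Rouquier representative in each RoCK chamber.

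The gap in your plan is the ``precise'' match you announce as the heart of the argument.  Lyle's Rouquier condition requires that \emph{every component} of every $\ell$-multipartition in the block be individually a level-$1$ Rouquier partition; the RoCK-chamber condition is a system of inequalities on the \emph{total} runner counts $t_i=\sum_j t_i^{(j)}$, governed not by a single ``defect'' but by the finer statistics $k_{ij}^{\pm}$ of Lemma~\ref{lem:Scopes-walls}.  These are not equivalent pointwise: the paper's own closing examples exhibit RoCK blocks, for two of the three $S_3$-chambers, that are \emph{not} Rouquier in Lyle's sense (they become Rouquier only after applying the diagram rotation $\upsilon$, which changes $\Lambda$, or after passing to a different, Scopes-equivalent block in the same chamber).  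So your ``$\Leftarrow$'' step --- Rouquier $\Rightarrow$ $p(\nu)$ lies in some RoCK chamber --- is only a one-sided implication, and it is not a tautological translation of inequalities between per-component runner gaps and total runner gaps; this is precisely where the paper falls back on Lyle's stretching theorems rather than on a dictionary.  Your ``$\Rightarrow$'' sketch is closer in spirit to what the paper does and is essentially sound, but the assertion that a weight sufficiently deep in the chamber automatically satisfies Lyle's \emph{per-component} inequalities still needs an argument (it is true, and is again the stretching construction); the abstract lattice-in-a-cone observation alone does not supply it.
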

The proof of Theorem \ref{th:main} depends on the proof of a generalization of \cite[Conj. 1]{lyleRouquier2022} in the context of categorical representations (Lemma \ref{lem:Chuang-Rouquier}).   We should note that this result is well known to many experts; it could be regarded as a special case of \cite[Th. 6.6]{chuangDerivedEquivalences2008} or \cite[Prop. 8.4]{chuangPerverseEquivalences}, so our aim here is to make sure it is clearly stated for practitioners of modular representation theory who may find it useful, and to draw out its combinatorial consequences.   

In particular, the Scopes walls, which control Scopes equivalence and RoCKness, are very conducive to computer computation.  In particular, they are found using simple linear equalities on the number of boxes of a given residue and do not involve any enumeration of partitions.  We have written a Sage program (available \href{https://cocalc.com/share/public_paths/684c1deddca6be23d9d830298743118c07c493b6}{here on CoCalc})
that tests whether blocks are RoCK and constructs examples of RoCK blocks.  As discussed above, these computations apply not just to Ariki-Koike algebras but also to other categorifications with the same support, in particular, the cyclotomic $q$-Schur algebras and category $\mathcal{O}$ for the Cherednik algebras of $G(\ell,1,n)$.

\section{Background}
\subsection{Affine Lie algebras}
Let $\fg$ be an affine Lie algebra, $\fh$ its abstract Cartan, and $W$ its Weyl group. Since there are multiple variations of this algebra, we should clarify that we take the abstract Kac-Moody algebra defined by a given affine Cartan matrix.  That is, we assume that the simple coroots $\al_1^{\vee},\dots, \al_{n}^{\vee}\in \fh$ are linearly independent, as are the simple roots $\al_1,\dots, \al_{n}\in \fh^*$.  Since the Cartan matrix of $\fg$ has corank 1, this means that the span of the coroots has codimension 1 in $\fh$ and the span of the roots has codimension 1 in $\fh^*$.  

There is a unique primitive $\Z_{>0}$-linear combination $\delta=\sum \delta_i\al_i$ which is nonzero but perpendicular to all $\al_i^{\vee}$'s and similarly a unique primitive $\Z_{>0}$-linear combination $\delta^{\vee}=\sum \delta_i^{\vee}\al_i^{\vee}$ which is nonzero but perpendicular to all $\al_i$.

Let $\bar\fh_{\R}$ be the quotient of the $\R$-span of $\al_i^{\vee}$ by the $\R$-span of $\delta^{\vee}$.  The dual $\bar\fh_{\R}^*$ is the quotient of the $\R$-span of $\al_i$ by the $\R$-span of $\delta$.  The symmetrized Cartan matrix defines an inner product on $\bar\fh_{\R}$.  For each real root $\alpha$ of $\fg$, let $\bar{\alpha}$ be its image in $\bar\fh_{\R}^*$.  These images are integer multiples of roots in a system corresponding to a finite-dimensional simple Lie algebra $\bar\fg$.     Let $\bar{W}$ be the Weyl group of this finite root system.

Since it is central, the value of $\delta^{\vee}$ is constant on the weights of any irreducible representation of $\fg$.  This invariant of a representation is called its level.  The set of elements in $\fh^*_{\R}$ where $\delta^{\vee}>0$ is precisely the Tits cone of $\fg$, the union of all the weights in the $W$-orbit of the dominant Weyl chamber.  Readers may be familiar with the actions of $W$ on a fixed level coset for $c\in \R$:  
\[\fh^*_c=\{\la\in \fh^*/\mathbb C\delta \mid  \delta^{\vee}(\la)=c, \al_i^{\vee}(\la)\in \R\}.\]  This subspace is a coset of $\bar\fh_{\R}^*$, and thus inherits a metric. The reflection in any real root $\alpha$ of $\fg$ becomes the usual geometric reflection in the hyperplane 
\begin{equation}
     H_{\alpha}=\{\la\in \fh^*_c | \alpha^{\vee}(\la)=0\}.
\end{equation}
This hyperplane is a coset of the vanishing set of $\bar\alpha^{\vee}$ in $\fh^*_{\R}$.  

The hyperplanes $H_{\alpha}$ cut $\fh^*_c$ into chambers called {\bf alcoves}.  Each alcove corresponds to a Weyl chamber in the $W$-orbit of the dominant Weyl chamber.  In particular, we have the {\bf dominant alcove} 
\[A=\{\la\in \fh^*_c \mid \alpha^{\vee}(\la)>0\text{ for all positive roots  }\alpha\}.\]  The Weyl group $W$ acts simply transitively on the set of alcoves.  For any point $o\in \fh^*_c$, we can define a $\bar{W}$-action on $\fh^*_c$ by $\bar w(o+h)=o+\bar wh$ for $h\in \bar\fh_{\R}^*$.  Following convention, we take $o$ to be the unique point satisfying $\alpha_1^{\vee}(o)=\cdots =\al_{n-1}^{\vee}(o)=0$.  In particular, this point is a vertex of $A$.  The action of any element $w\in W$ on $\fh^*_c$ can be factored uniquely as product of a translation $\tau_h$ by an element of $\bar\fh_{\R}^*$, and an element $\bar{w}$ in $\bar{W}$.  Note that $\bar{w}\tau_h=\tau_{wh}\bar{w}$, so this factorization can be taken in either order.

The most important example for us will be $\slehat$, the affine Lie algebra obtained from $\sle((t))$ by taking the unique central extension and adding a loop element $\partial$ which acts by $t\frac{\partial}{\partial t}$. The torus $\fh$ can most easily be described as  \[\fh=\{(h_0,h_1\dots,h_{e+1})\in  \R^{e+2}\mid h_1+\cdots+h_e=0\}\] with the coroots
\[\al_e^\vee=(1,-1,0,\dots, 0,1,0)\qquad \al_i^\vee=(0,\dots, 1,-1, 0,\dots, 0). \]
The full set of positive roots is thus 
$\alpha_{ij;n}$ for $i,j\in [1,e]$, the vector with $h_i=1,h_j=-1$, and last coordinate $-n$, where $n\geq 0$ if $i<j$, and $n>0$ if $i>j$. Note that $\al_{e1;1}=\al_e$.  
We can identify the same space with $\mathfrak{h}^*$ via the inner product, and take the roots to be
\[\al_e=(0,-1,0,\dots, 0,1,-1)\qquad \al_i=(0,\dots, 1,-1, 0,\dots, 0). \]
Note that \[\delta=(0,0,\dots, 0,-1)\qquad \delta^{\vee}=(1,0,\dots, 0,0)\]
The reduced space is defined by \[\bar\fh\cong \bar \fh^* \cong \{(h_1,\dots, h_{e})\in \R^e\mid h_1+\cdots+h_e=0\}\]
where again, the metric is the usual one induced by the inner product on $\R^e$.    

The action of $s_i=s_{\alpha_i}$ on $\fh$ is thus given by the matrix $I-\al_i\cdot \al_i^{\vee}$, where we view $\al_i$ as a column vector and $\al_i^{\vee}$ as a row vector.  Thus, the reflections $s_i=s_{\al_i}$ for $i>0$ act by the usual permutation matrices on the coordinates $h_i$ and $h_{i+1}$, and $s_e=s_{\alpha_e}$ acts by
\[\begin{bmatrix} 1 & 0 & 0 &\cdots  &0&0 & 0\\
1 & 0 &0& \cdots&0 & 1 & 0\\
0 & 0 & 1 & \cdots & 0 & 0 & 0\\
\vdots & \vdots & \vdots &\ddots &\vdots &\vdots &\vdots \\
0& 0& 0 & \cdots & 1 & 0&0\\
-1 & 1 & 0 &\cdots & 0 & 0&0\\
1&  -1 & 0 & \cdots & 0& 1 & 1 \\
  \end{bmatrix}\]
  Considering the quotient by $\delta$, we obtain 
   \[\fh^*/\mathbb C\delta=\{(h_0,h_1\dots,h_{e})\in  \R^{e+1}\mid h_1+\cdots+h_e=0\},\] and the induced action on this quotient is obtained by 
  simply deleting the rightmost column and bottom row from each matrix.  Restricting to the level $\fh^*_c,$ we fix the coordinate $h_0=c$, so 
  \[s_i(c,h_1,\dots, h_e)=\begin{cases} (c, h_1,\dots, h_{i+1},h_{i},\dots,h_e) & i>0\\
  (c,h_{e}+c, h_2,\dots, h_{e-1}, h_1-c) & i=0\end{cases}\]
  Thus, $s_e$ acts by reflection in the line $h_1-h_e=c$.  More generally, the different roots of $\fg$ act by reflection in the lines $h_i-h_j=cm$ for $m\in \Z$.  
  
  From this perspective, the Weyl group $W$ acts by affine transformations; this can be seen from the fact that it is generated by the copy of $S_e$ generated by $s_1,\dots, s_{e-1}$, which acts linearly, and by translation by vectors in the root lattice of $\mathfrak{sl}_e$, i.e. the vectors \[X=\{(ca_1,\dots, ca_{e}) \mid a_i\in \Z, a_1+\cdots+a_e=0\}.\]
  The fundamental alcove is the set $A=\{h_1\geq h_2 \geq \cdots \geq h_e\geq h_1-c\}$; the last inequality follows by requiring $\alpha_e^{\vee}(c,h_1,\dots, h_e,*)=c+h_e-h_1\geq 0$.  
  
  For example, if $e=2$, then $\fh^*_c\cong \R$ is 1-dimensional, parameterized by $h_1$ with $h_2=-h_1$.  We have root hyperplanes at $h_1-h_2=cm$ for $m\in \Z$, or, equivalently, $h_1=c\frac{m}{2}$.  
  Thus, the alcoves are given by the intervals $A_m=[c\frac{m}{2},c\frac{m+1}{2}]$, and the fundamental alcove is defined by $h_2+c\geq h_1\geq h_2$ or equivalently $h_1\in A_0$.  The chambers $A_m$ with $m$ even are the images of $A_0$ under translations by $cm$ (these are the even elements of the Weyl group) and those with $m$ odd are the images of $A_0$ under a reflection at $h_1=c\frac{m+1}{4}$, the midpoint between these two chambers (these are all the odd elements of the affine Weyl group).
  
  If $e=3$, then the alcoves are equilateral triangles.  Around each point in the root lattice, there are 6 triangles around it; these form a hexagon.  For $0$, this hexagon is defined by $|h_i-h_j| \leq 1$ for all $i,j$.  The translations in the root lattice act freely transitively on the set of these hexagons.  These form the Voronoi tessellation of the root lattice.   
  
  This makes visible the factorization as the composition of a translation and an element of $\bar{W}$: A unique translation sends each alcove to one in the hexagon around the origin, which is the tip of one of the finite Weyl chambers, and then a unique element of $\bar{W}$ sends this to the fundamental alcove.  In Figure \ref{fig:alcove}, the fundamental alcove is colored green, its orbit under $\bar{W}$ is colored red, and all the elements of the root lattice that fit in the picture are marked with a black dot.  The other vertex points of alcoves are the elements of the weight lattice of $\mathfrak{sl}_3$ which don't lie in the root lattice; the other vertices of the fundamental alcove are $(\frac{1}{3},\frac{1}{3},-\frac{2}{3})$ and $(\frac{2}{3},-\frac{1}{3},-\frac{1}{3})$.
  \begin{figure}[h]
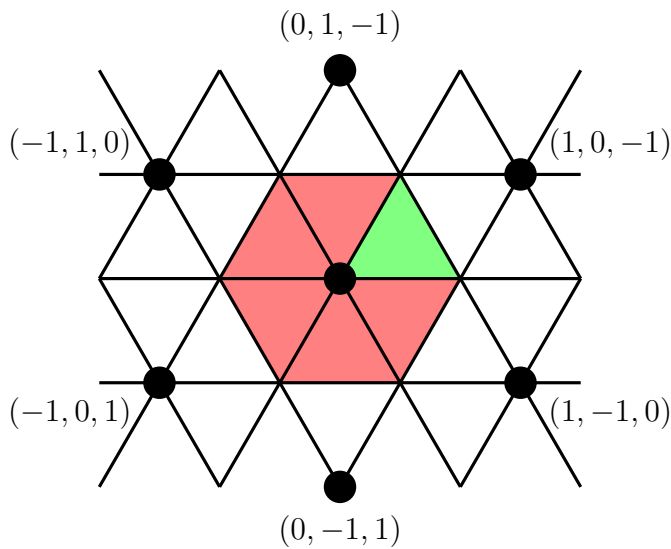

      \centering  \[\tikz[very thick,scale=1.6]{
  \fill[red!50!white] (1,0) -- (1/2,{sqrt(3)/2})--(-1/2,{sqrt(3)/2})-- (-1,0) -- (-1/2,{-sqrt(3)/2})--(1/2,{-sqrt(3)/2})-- cycle;
    \fill[green!50!white] (1,0) -- (1/2,{sqrt(3)/2})--(0,0)-- cycle;
  \draw (2,0)--(-2,0); \draw (2,{sqrt(3)/2})--(-2,{sqrt(3)/2});  \draw (2,{-sqrt(3)/2})--(-2,{-sqrt(3)/2});\draw (1,{sqrt(3)})-- (-1,-{sqrt(3)}); \draw (-1,{sqrt(3)})-- (1,-{sqrt(3)});
  \draw (2,{sqrt(3)})-- (0,-{sqrt(3)}); \draw (-2,{sqrt(3)})-- (0,-{sqrt(3)}); \draw (0,{sqrt(3)})-- (2,-{sqrt(3)}); \draw (0,{sqrt(3)})-- (-2,-{sqrt(3)});
    \draw (2,0)-- (1,-{sqrt(3)}); \draw (-2,0)-- (-1,-{sqrt(3)}); \draw (1,{sqrt(3)})-- (2,0); \draw (-1,{sqrt(3)})-- (-2,0); \node[draw=black,circle,fill,label=8:{$(1,0,-1)$}] at (3/2, {sqrt(3)/2}){};\node[draw=black,circle,fill,label=188:{$(-1,0,1)$}] at (-3/2, {-sqrt(3)/2}){};
    \node[draw=black,circle,fill,label=-8:{$(1,-1,0)$}] at (3/2, {-sqrt(3)/2}){};\node[draw=black,circle,fill,label=-188:{$(-1,1,0)$}] at (-3/2, {sqrt(3)/2}){};
      \node[draw=black,circle,fill,label=below:{$(0,-1,1)$}] at (0, {-sqrt(3)}){};\node[draw=black,circle,fill,label=above:{$(0,1,-1)$}] at (0, {sqrt(3)}){};
      \node[draw=black,circle,fill] at (0,0){};}\]
      \caption{Alcoves for $\widehat{\mathfrak{sl}}_3$.}
      \label{fig:alcove}
  \end{figure}

\subsection{Scopes chambers}
 Let $N$ denote any finite set of positive real roots.    

The hyperplanes $H_{\alpha}$ for $\alpha\in N$ cut $\mathfrak{h}^*_c$ into finitely many chambers, which we call Scopes chambers.  Every Scopes chamber is defined by choosing $\epsilon_{\alpha}\in \{\pm 1\}$ for $\alpha\in N$, and considering the inequalities $C=\{h\mid \epsilon_{\alpha}\alpha^{\vee}(h)\geq 0\}$.  

\begin{lemma}\label{lem:same-scopes} \hfill
\begin{enumerate}
    \item 
We have $w\cdot\Delta_+\cap  N=w'\cdot\Delta_+\cap  N$ 
if and only if $wA$ and $w'A$ lie in the same Scopes chamber.  
\item If $wA$ and $w'A$ lie in the same Scopes chamber then there is a sequence $i_1,\dots, i_p$ such that for $w'=ws_{i_1}\cdots s_{i_p}$, and to $k=0,\dots,p $, $ws_{i_1}\cdots s_{i_k}A$ lies in the same Scopes chamber as $wA$ and $w'A$.
\end{enumerate}    
\end{lemma}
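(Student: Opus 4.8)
The plan is to first convert part (1) into a statement about the $W$-action on real roots, and then deduce part (2) by a standard gallery argument, the whole point being to keep track of the fact that the hyperplanes one crosses along the way are never among the $H_\alpha$ with $\alpha\in N$.

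For part (1), fix a point $\la$ in the open fundamental alcove $A$. Since $W$ acts on $\fh^*_c$ by isometries that permute the hyperplanes $H_\beta$, for every real root $\alpha$ and every $w\in W$ one has $\alpha^\vee(w\la)=(w^{-1}\alpha)^\vee(\la)$ (pullback of coroots under the reflection action). Because $A$ lies strictly on the positive side of $H_\beta$ for every $\beta\in\Delta_+$ and on the negative side for $\beta\in-\Delta_+$, the value $\alpha^\vee(w\la)$ is positive exactly when $w^{-1}\alpha\in\Delta_+$, i.e. when $\alpha\in w\cdot\Delta_+$. Now the open alcove $wA$ meets no $H_\beta$, so it is contained in a unique open Scopes chamber, and the above computation shows that its sign datum $(\epsilon_\alpha)_{\alpha\in N}$ is given by $\epsilon_\alpha=+1\iff\alpha\in w\cdot\Delta_+$. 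Hence $wA$ and $w'A$ lie in the same Scopes chamber iff they have the same sign datum iff $w\cdot\Delta_+\cap N=w'\cdot\Delta_+\cap N$, which is part (1).

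For part (2) I argue by induction on the number $d$ of hyperplanes $H_\beta$, with $\beta$ running over \emph{all} positive real roots, that separate $wA$ from $w'A$; this is finite because a compact segment joining interior points of the two alcoves meets only finitely many hyperplanes of the (locally finite) alcove arrangement. If $d=0$ the two open alcoves cannot be distinct, so $w=w'$ by simple transitivity of $W$ on alcoves and the empty sequence works. If $d>0$, note that the closed alcove $\overline{wA}$ is precisely the set of points lying on the $wA$-side of each of its walls $H_{w\alpha_1},\dots,H_{w\alpha_n}$; since $w'A\neq wA$ is an open alcove, it cannot be contained in $\overline{wA}$, so $w'A$ lies strictly on the far side of some wall $H_{w\alpha_{i_1}}$ of $wA$, and that wall separates $wA$ from $w'A$. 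Set $w_1:=ws_{i_1}$, so that $w_1A$ is the reflection of $wA$ across $H_{w\alpha_{i_1}}$ and therefore lies on the same side of $H_{w\alpha_{i_1}}$ as $w'A$. The hyperplanes separating $w_1A$ from $w'A$ are exactly those separating $wA$ from $w'A$ with $H_{w\alpha_{i_1}}$ deleted, so there are $d-1$ of them, and the inductive hypothesis applied to $w_1,w'$ furnishes $i_2,\dots,i_p$ with $w'=w_1s_{i_2}\cdots s_{i_p}=ws_{i_1}s_{i_2}\cdots s_{i_p}$ and with each intermediate alcove in the same Scopes chamber as $w_1A$ and $w'A$.

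It remains only to check that the Scopes chamber is unchanged at the very first step (the rest then follows by the induction, since the set of separating hyperplanes only shrinks). By hypothesis no $H_\alpha$ with $\alpha\in N$ separates $wA$ from $w'A$, whereas $H_{w\alpha_{i_1}}$ does; a single hyperplane cannot both separate and fail to separate a given pair of alcoves, so $H_{w\alpha_{i_1}}$ is not one of the Scopes walls. Hence $w_1A$ lies on the same side as $wA$ of every Scopes wall, i.e. in the same Scopes chamber, and iterating, every alcove $ws_{i_1}\cdots s_{i_k}A$ produced by the induction lies in that chamber, with the last one equal to $w'A$. The geometry of alcoves and galleries invoked here is entirely standard; the only genuine care needed is bookkeeping — the left-versus-right action and the direction of $w$ versus $w^{-1}$ in part (1), and the verification that the wall crossed at each step is never a Scopes wall.
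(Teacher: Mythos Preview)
Your proof is correct and follows essentially the same route as the paper's: part (1) via the observation that $\alpha^\vee$ is positive on $wA$ iff $w^{-1}\alpha\in\Delta_+$, and part (2) by induction on the number of separating root hyperplanes, peeling off one wall of $wA$ at a time and noting that this wall cannot be a Scopes wall. You supply a bit more justification (why such a wall exists, why the count of separating hyperplanes drops by exactly one), but the argument is the same.
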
 
\begin{proof}\hfill
\begin{enumerate}
    \item Note that $\alpha^{\vee}$ is positive on the alcove $wA$ if and only if $w^{-1}\alpha^{\vee}$ is positive on $A$, that is, if $w^{-1}\alpha\in \Delta$.  Thus, $ w\cdot\Delta_+\cap  N$ is exactly the subset of $N$ which is positive on $wA$.  By definition, this subset is the same on another alcove if and only if they are in the same Scopes equivalence class.
\item Let us prove this by induction on the number of root hyperplanes separating $wA$ and $w'A$.  When this number is 0, $w=w'$ and the claim is tautological.  Let $H_{\alpha}$ be a hyperplane that is a facet of $wA$ and separates it from $w'A$.  Then $H_{w^{-1}\alpha} $ is a facet of $A$, and so $w^{-1}\alpha$ is $\pm 1$ times a simple root $\alpha_{i_1}$.  This means that the reflection across $H_\alpha$ can be written as $s_{\alpha}=ws_{i_1}w^{-1}$.  Thus, $s_{\alpha}wA=ws_{i_1}A$ is a chamber separated from $w'A$ by only the hyperplanes that separate $wA$ from $w'A$, excluding $H_{\alpha}$.  Note that this means that it is not separated from $  wA$ or $w'A$ by the hyperplane defined by an element of $N$, and thus is in the same Scopes chamber.

By the inductive hypothesis applied to the chambers $ws_{i_1}A$ and $w'A$, we can find $i_2,\dots, i_p$ such that $w'=ws_{i_1}\cdots s_{i_p}$ and $ws_{i_1}\cdots s_{i_k}A$ are in the same Scopes chamber.  This completes the proof.\qedhere 
\end{enumerate}
\end{proof}

Every Scopes chamber $C$ has an asymptotic cone $\bar{C}=\{h\in \mathfrak{h} \mid th\in C\text{ for }t\gg 0\}$.  

\begin{lemma}
    The asymptotic cone of a Scopes chamber is always a face of the hyperplane arrangement $\bar{\alpha}=0$ for $\alpha\in N$.  For each finite Weyl chamber, there is a unique Scopes chamber that contains it in its asymptotic cone.
\end{lemma}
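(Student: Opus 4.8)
The plan is to realize the asymptotic cone $\bar C$ of a Scopes chamber $C$ as the recession cone of the polyhedron $C\subseteq\fh^*_c$, read off its description by linear inequalities, recognize it as a face of the finite arrangement via a short convexity argument, and --- for the second assertion --- track the signs of the functionals $\al^\vee$ along a ray running out into a given finite Weyl chamber. The proof is entirely elementary; the only place where care is needed is the interface with the (slightly informal) definition of $\bar C$: one must check that, for a basepoint $o$ chosen inside $C$, the set $\{h\mid o+th\in C\text{ for }t\gg 0\}$ really is the recession cone, and be explicit that $\{\bar\al^\vee=0\}$ and $\{\bar\al=0\}$ cut out the same arrangement in $\bar\fh^*_\R$.

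First I would fix a basepoint $o$ in the interior of $C$ and use it to identify $\fh^*_c$ with its direction space $\bar\fh^*_\R$, so that $\bar C$ becomes the recession cone of $C$. As in Section~2.1, for a real root $\al$ the functional $\al^\vee$ descends to $\fh^*/\mathbb C\delta$ and restricts on $\fh^*_c$ to the affine function $\al^\vee(o+h)=\al^\vee(o)+\bar\al^\vee(h)$, where $\bar\al^\vee$ is a nonzero linear functional on $\bar\fh^*_\R$ whose kernel is the hyperplane $\{\bar\al=0\}$ (coroot and root being proportional on $\bar\fh^*_\R$). Hence, if $C$ is cut out by a sign vector $(\epsilon_\al)_{\al\in N}$, discarding the constants gives
\[
\bar C=\{h\in\bar\fh^*_\R\mid \epsilon_\al\bar\al^\vee(h)\ge 0\text{ for all }\al\in N\}.
\]

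For part~(1) I would let $S=\{\al\in N\mid \bar\al^\vee|_{\bar C}\equiv 0\}$. For each $\al\in N\setminus S$ choose $h_\al\in\bar C$ with $\epsilon_\al\bar\al^\vee(h_\al)>0$, and put $h_0=\sum_{\al\in N\setminus S}h_\al$; since $\bar C$ is a cone, $h_0\in\bar C$, and as each summand contributes a nonnegative amount to every $\epsilon_\al\bar\al^\vee$, we get $\epsilon_\al\bar\al^\vee(h_0)>0$ for all $\al\notin S$. Thus $h_0$ lies in the relatively open face $F=\{h\mid \bar\al^\vee(h)=0\ (\al\in S),\ \epsilon_\al\bar\al^\vee(h)>0\ (\al\notin S)\}$ of the arrangement $\{\bar\al=0\mid\al\in N\}$, and one checks at once that $\overline F=\bar C$ (both inclusions being immediate from the definition of $S$). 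So $\bar C$ is a face of that arrangement.

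For part~(2), recall that a finite Weyl chamber $D$ is an open chamber of the full finite root hyperplane arrangement, which refines $\{\bar\al=0\mid\al\in N\}$ (each $\bar\al$ with $\al\in N$ being proportional to a finite root); hence each $\bar\al^\vee$, $\al\in N$, is nonvanishing and of a fixed sign $\epsilon_\al$ on all of $D$. Fixing $v\in D$, for $t\gg 0$ we have $\al^\vee(o+tv)=\al^\vee(o)+t\bar\al^\vee(v)$ of sign $\epsilon_\al$ for all $\al\in N$, so $(\epsilon_\al)$ is realized --- in fact with strict inequalities, so the region it defines is a genuine (full-dimensional) Scopes chamber $C$ --- and $D\subseteq\{h\mid\epsilon_\al\bar\al^\vee(h)>0\}\subseteq\bar C$. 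Uniqueness is then immediate: if also $D\subseteq\bar{C}'$ for a Scopes chamber $C'$ cut out by $(\epsilon'_\al)$, evaluating $\epsilon'_\al\bar\al^\vee(\cdot)\ge 0$ at $v$ and using $\bar\al^\vee(v)\ne 0$ forces $\epsilon'_\al=\operatorname{sign}(\bar\al^\vee(v))=\epsilon_\al$ for every $\al$, so $C'=C$. The main (minor) obstacle, as flagged above, is the bookkeeping around the definition of the asymptotic cone and the identification of the two finite arrangements; once that is pinned down, the argument is as sketched.
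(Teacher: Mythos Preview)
Your proof is correct and follows essentially the same approach as the paper: compute $\bar C$ by linearizing the affine inequalities $\epsilon_\alpha\alpha^\vee\ge 0$ to $\epsilon_\alpha\bar\alpha^\vee\ge 0$, and for the second claim read the sign vector $(\epsilon_\alpha)$ off a point of the given Weyl chamber. Your convexity argument (constructing $h_0$ to exhibit $\bar C$ as $\overline F$) fills in a step the paper leaves implicit, and your care with the basepoint and with the identification of the arrangements $\{\bar\alpha=0\}$ and $\{\bar\alpha^\vee=0\}$ is well placed; otherwise the two proofs coincide.
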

\begin{proof}
    The asymptotic cone of this chamber is defined by $\bar{C}=\{h\mid \epsilon_{\alpha}\bar{\alpha}(h)\geq 0\}$, since $\alpha(th)\geq 0$ for $t\gg 0$ iff $\bar \alpha(h)\geq 0$.  Therefore, this asymptotic cone will contain a Weyl chamber if and only if $\epsilon_{\alpha}=\epsilon_{\beta}$ whenever $\bar{\alpha}=\bar{\beta}$.  Given a Weyl chamber, we can define a corresponding $\epsilon_{\alpha}$ to have the same sign as $\bar{\alpha}$ on the Weyl chamber.  This will define the only Scopes chamber with our fixed Weyl chamber in its asymptotic cone.
\end{proof}

We call a Scopes chamber $C$ {\bf RoCK} if $\bar{C}$ contains a finite Weyl chamber, or equivalently, $C$ contains a translate of a finite Weyl chamber.  There are at most $\# \bar{W}$ RoCK chambers, one for each finite Weyl chamber; if the cone $\bar{C}$ contains a Weyl chamber $\mathfrak{c}$, we say that $C$ or an alcove in $C$ is {\bf RoCK for $\mathfrak{c}$.}

One helpful way to think about these different RoCK chambers is to factor your Weyl group element into a translation and an element of $\bar{W}$.  As discussed above, we can write $w=\tau_h\bar{w}$.  Since $\bar{w}$ sends $A$ to one of the other chambers adjacent to $o$, the element $h$ accounts for most of the hyperplanes that separate $wA$ from $A$.  More precisely, $w\cdot o=h+o$, so $wA$ must be one of the chambers that has $h+o$ as a vertex, and $\bar{w}$ only controls which of these chambers it is.  In particular, if $h+o$ is in the interior of a Scopes chamber, then all adjacent chambers are in the same Scopes chamber.  Thus, we can conclude that when $h$ is deep inside a Weyl chamber, that is, $\alpha^{\vee}(h) \gg 0 $ for all roots $\alpha$ positive on the chamber, then the alcove $wA$ will be in the corresponding RoCK chamber.  


\subsection{Categorical actions}
While the fundamental input of this paper is a categorical action of a Lie algebra, we will not use this definition in a deep way, and thus will only give the basic facts about this notion that we need in this paper.  

A {\bf categorical action} of a Kac-Moody algebra $\fg$ is a representation of a particular 2-category $\tU(\fg)$.  This 2-category has:
\begin{enumerate}\setcounter{enumi}{-1}
    \item object set given by the weight lattice $\wela$ of $\fg$.  
    \item 1-morphisms generated by symbols $\eE_i,\eF_i$ for $i$ ranging over the simple roots of $\fg$.  These act on the weight lattice in the same way that Chevalley generators change weights:
    \[ \eE_i\colon \lambda\to \lambda+\al_i \qquad \eF_i\colon \lambda\to \lambda-\al_i\] 
    \item For our purposes, a detailed description of the 2-morphisms is not needed.  See \cite{Brundandef} for a detailed discussion of the different possible generating sets and relations.
\end{enumerate}
Thus, a categorical action is an assignment of a category $\cC_\la$ to each element of the weight lattice $\la$, a functor to each $\eE_i$ and $\eF_i$, and natural transformations to each 2-morphism. We will always consider the case where these categories $\cC_\la$ are abelian, and the functors $\eE_i,\eF_i$ are exact.   
We discuss the relevant examples of categorical actions in Section \ref{sec:examples} when we cover their RoCK blocks.  

In this paper, we will only consider actions that are {\bf integrable}.  That is, for every object $M$:
There is an integer $N$ such that $\eE_i^NM=0$ and $\eF_i^NM=0$.
 A hypothesis we assume in \cref{th:main,th:B} is that the category $\cC_{\la}$ is non-trivial only if the $\lambda$ weight space of a highest weight representation $V(\Lambda)$ is non-trivial; such a categorification is automatically integrable, since for any weight $\la$, the $\mathfrak{sl}_{2}$ root string for $\al_i$ through $\la$ is finite length, i.e. the weights $\la\pm N\al_i$ have trivial weight space for $N\gg 0$.

We will use two basic facts about integrable categorical actions, both of which follow from examination of the 2-morphisms:  \renewcommand{\theenumi}{\roman{enumi}}
\begin{enumerate}
    \item The power $\eF_i^k$ is the direct sum of $k!$ isomorphic summands, which we denote by $\eF_i^{(k)}$ and call the divided power functor.
    \item For each simple root, there is a chain complex $\Theta_i\colon \lambda\to s_i\lambda$ of 1-morphisms in $\tU$.  A version of this complex was defined in \cite[\S 6.1]{chuangDerivedEquivalences2008} but we use the definition in \cite[(3-4)]{Cauclasp} for more modern notation.  On an integrable representation, we can interpret $\Theta_i$ as a functor
$\Theta_i\colon D^b(\cC_{\la})\to D^b(\cC_{s_i\la})$, which is invertible up to homotopy and thus an equivalence of categories. 
\end{enumerate}

\section{RoCK blocks}
\subsection{Scopes equivalences from Chuang-Rouquier}
\label{sec:Scopes-CR}
Consider any integrable categorical module $\cC$ over $\fg$, and let $\nu$ be any weight of this module.  
Assume that $\al_i^{\vee}(\nu)=k>0$.  

\begin{lemma}\label{lem:Chuang-Rouquier1}
The functor $\eF_i^{(k)}\colon \cC_{\nu}\to \cC_{s_i\nu}$ is an equivalence of abelian categories (that is, a Scopes equivalence) if and only if the category $\cC_{\nu+\al_i}$ is trivial.
\end{lemma}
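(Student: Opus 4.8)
The plan is to read everything off the Rouquier complex $\Theta_i$ of fact (ii). Write $k=\al_i^\vee(\nu)>0$, so that $s_i\nu=\nu-k\al_i$. Recall that on an object $M\in\cC_\nu$ the complex $\Theta_iM$ is computed explicitly as
\[ \cdots\;\longrightarrow\;\eF_i^{(k+2)}\eE_i^{(2)}M\;\longrightarrow\;\eF_i^{(k+1)}\eE_iM\;\longrightarrow\;\eF_i^{(k)}M, \]
that is, its terms are $\eF_i^{(k+j)}\eE_i^{(j)}M$ for $j\geq 0$, and the only one not built out of $\eE_i$ is the ``extreme'' term $\eF_i^{(k)}M$; there is a natural map of complexes from $\Theta_iM$ onto this extreme term. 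So $\eF_i^{(k)}$ is, up to a shift, the ``leading part'' of the equivalence $\Theta_i$, and the lemma amounts to saying that the remaining part dies exactly when $\cC_{\nu+\al_i}=0$.

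If $\cC_{\nu+\al_i}=0$, then $\eE_iM=0$ for every $M\in\cC_\nu$, so all terms $\eF_i^{(k+j)}\eE_i^{(j)}M$ with $j\geq1$ vanish and $\Theta_iM\simeq\eF_i^{(k)}M$, concentrated in a single cohomological degree. Thus the equivalence $\Theta_i$ carries the heart $\cC_\nu$ into the heart $\cC_{s_i\nu}$; since $\cC_\nu$ and $\cC_{s_i\nu}$ generate their bounded derived categories under shifts and extensions, $\Theta_i$ then preserves both $D^{\leq 0}$ and $D^{\geq 0}$, hence is $t$-exact. Therefore $\Theta_i$ restricts to an equivalence of abelian categories between $\cC_\nu$ and $\cC_{s_i\nu}$, and this restriction is precisely $\eF_i^{(k)}$; so $\eF_i^{(k)}$ is a Scopes equivalence.

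For the converse I argue by contraposition, using one more standard feature of integrable categorical actions: $\eE_i$ and $\eF_i$ (hence $\eE_i^{(k)}$ and $\eF_i^{(k)}$) are biadjoint up to grading shift, and on a weight $\mu$ with $\al_i^\vee(\mu)=m>0$ one has $\eE_i\eF_iY\cong\eF_i\eE_iY\oplus Y^{\oplus m}$ for $Y\in\cC_\mu$, together with its divided-power refinement. Suppose $\cC_{\nu+\al_i}\neq 0$ and pick $0\neq X\in\cC_{\nu+\al_i}$. Since $\al_i^\vee(\nu+\al_i)=k+2>0$, this relation on $\cC_{\nu+\al_i}$ shows first that $\eF_i$ annihilates no nonzero object of $\cC_{\nu+\al_i}$, so $M_0:=\eF_iX\in\cC_\nu$ is nonzero; then $\eE_iM_0=\eE_i\eF_iX$ contains $X^{\oplus(k+2)}$, hence is nonzero; and then $\eF_i\eE_iM_0\neq0$ as well. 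Now assume, for contradiction, that $\eF_i^{(k)}\colon\cC_\nu\to\cC_{s_i\nu}$ is an equivalence of abelian categories. Its quasi-inverse is its right adjoint, which by biadjointness (passing to divided powers) is $\eE_i^{(k)}$ up to a shift; hence $\eE_i^{(k)}\eF_i^{(k)}$ is isomorphic, up to a shift, to $\id_{\cC_\nu}$. But the divided-power relations in $\tU(\fg)$ decompose $\eE_i^{(k)}\eF_i^{(k)}$, as an endofunctor of $\cC_\nu$, into $\id_{\cC_\nu}$ together with a direct sum of positive-multiplicity (shifted) copies of $\eF_i^{(j)}\eE_i^{(j)}$ for $1\leq j\leq k$; in particular $\eF_i\eE_i$ occurs as a summand, and it does not annihilate $M_0$. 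Evaluating on $M_0$ then shows that $\eE_i^{(k)}\eF_i^{(k)}M_0$ has a shift of $M_0$ as a \emph{proper} direct summand, contradicting $\eE_i^{(k)}\eF_i^{(k)}M_0\cong M_0\langle c\rangle$ (here we use that each $\cC_\mu$ is Krull--Schmidt, so that $A\cong A\oplus B$ forces $B=0$).

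The converse is the real content and the main obstacle, precisely because it is invisible on Grothendieck groups: $[\eF_i^{(k)}]\colon K_0(\cC_\nu)\to K_0(\cC_{s_i\nu})$ is always an isomorphism — this is just the fact that $F^{(k)}$ is an isomorphism between opposite weight spaces of any integrable $\mathfrak{sl}_2$-representation — so the failure of $\eF_i^{(k)}$ to be a categorical equivalence can only be detected via the $2$-categorical structure (equivalently, via the higher terms of $\Theta_i$ that factor through $\eE_i$). The two points needing care are the grading-shift bookkeeping in the biadjunction and the divided-power relations, and the Krull--Schmidt hypothesis on the $\cC_\mu$ invoked at the end (which holds in all the examples of interest).
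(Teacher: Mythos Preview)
Your proof is correct and, for the $\Leftarrow$ direction, identical to the paper's: both observe that $\cC_{\nu+\al_i}=0$ kills all but the degree-zero term of the Rickard complex, so the derived equivalence $\Theta_i$ agrees with the exact functor $\eF_i^{(k)}$ and is therefore $t$-exact.

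For the $\Rightarrow$ direction the underlying strategy is the same---show that $\eE_i^{(k)}\eF_i^{(k)}$ cannot be the identity on $\cC_\nu$---but the tactics differ. The paper uses integrability to locate a highest-weight object $N\in\cC_{\nu+r\al_i}$ for some $r>0$ and then invokes a clean closed formula (\cite[Lem.~4.1]{Cauclasp}) to compute $\eE_i^{(k)}\eF_i^{(k)}\eF_i^{(r)}N\cong(\eF_i^{(r)}N)^{\oplus\binom{k+r}{k}^2}$, an immediate contradiction. You instead invoke the Sto\v{s}i\'c-type decomposition $\eE_i^{(k)}\eF_i^{(k)}\cong\id\oplus\bigoplus_{j\geq 1}(\eF_i^{(j)}\eE_i^{(j)})^{\oplus\binom{k}{j}}$ on $\cC_\nu$ and exhibit an object $M_0=\eF_iX$ on which the $j=1$ summand is nonzero. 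Your route avoids passing to a highest-weight object, at the cost of appealing to a more elaborate structural relation in $\tU(\fg)$; the paper's route is numerically cleaner but leans on an external citation. Both arguments tacitly need a finiteness hypothesis to conclude (you make this explicit as Krull--Schmidt; the paper needs the same to rule out $M\cong M^{\oplus c}$ with $c>1$). The repeated ``up to grading shift'' hedging in your write-up is unnecessary here, since the paper works in the ungraded setting, but it does no harm.
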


This lemma is a restatement of one special case of \cite[Prop. 8.4]{chuangPerverseEquivalences}: the Chuang-Rouquier equivalences are perverse and will be $t$-exact if and only if the perversity function is 0 for all simples. This is equivalent to the condition that $\cC_{\nu+\al_i}$ is trivial.  We will give a more direct proof here that does not depend on the notion of a perverse equivalence. 
\begin{proof}
    $\Rightarrow$: If $\eF_i^{(k)}$ is an equivalence of categories, then $\eE_i^{(k)}$ is its left and right adjoint, and thus the inverse equivalence.  Thus, for any object $M\in \cC_{\nu}$, we have $M\cong  \eE_i^{(k)} \eF_i^{(k)}M$.  On the other hand, if $\cC_{\nu+\al_i}\neq 0$, then there is a highest weight object $N$ in $\cC_{\nu +r\al_i}$ for some $r>0$.  The object $\eF_i^{(r)}N$ is nonzero, and 
\[\eE_i^{(k)}\eF_i^{(k)}\eF_i^{(r)}N\cong \eE_i^{(k)}\eF_i^{(k+r)}N^{\oplus \binom{k+r}{k}}\cong \eF_i^{(r)}N^{\oplus\binom{k+r}{k}^2}\]
applying \cite[Lem. 4.1]{Cauclasp} with $\mu=k+2r,b=k+r,a=k$.  This contradicts the claim that $\eF_i^{(k)}$ is an equivalence.

$\Leftarrow$:  Since $\cC_{\nu+\al_i}$ is trivial, every object in $\cC_{\nu}$ is killed by $\eE_i$, that is, it is highest weight.  This implies that the only term of the Rickard complex $\Theta_i$ which acts nontrivially is $\eF_i^{(k)}$ in homological degree 0.  Thus, on $\cC_{\nu}$, the actions of $\Theta_i$ and the derived functor of $\eF_i^{(k)}$ coincide.  The former is an equivalence of derived categories, and the latter is exact in the usual $t$-structure.  This shows that $\eF_i^{(k)}$ is an equivalence of abelian categories as desired.
\end{proof}

Now, we restrict attention to the case where $\nu$ is positive level (i.e. $\delta^{\vee}(\nu)>0$). In this case, there is a unique dominant weight $\mu=w\nu$ in the orbit of $\nu$.    Consider the set $N_{\mu}=\{\al\in \Delta^{+}\mid \cC_{\mu+\alpha}\neq 0\}$.

The failure of the functor $\Theta_i$ to preserve the $t$-structure depends in a precise way on the structure of the representation of the $\mathfrak{sl}_2$ generated by $E_i,F_i$ on the root string $\nu +m\al_i$ for all $m\in \Z$.  Using the action of $W$, this is the same as the structure of the root string through $\mu$ with respect to the root $w\al_i$.  In particular:
\begin{lemma}\label{lem:Chuang-Rouquier}
  The following are equivalent:
  \begin{enumerate}
    \item The functor $\eF_i^{(k)}\colon \cC_{\nu}\to \cC_{s_i\nu}$ is an equivalence of abelian categories (that is, a Scopes equivalence).
    \item The category $\cC_{\mu+w\al_i}$ is trivial, i.e. $w\al_i\notin N_{\mu}$.
    \item The alcoves $wA$ and $ws_iA$ lie in the same Scopes chamber.
  \end{enumerate}
\end{lemma}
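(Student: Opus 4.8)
The plan is to reduce all three conditions to the single combinatorial statement that $w\al_i \notin N_\mu$. The point to settle first is that $w\al_i$ is a \emph{positive} real root: since $W$ preserves the pairing and $(w\al_i)^\vee = w(\al_i^\vee)$, we have $(w\al_i)^\vee(\mu) = \al_i^\vee(\nu) = k > 0$, so if $w\al_i$ were negative then $-w\al_i$ would be a positive root pairing to $-k < 0$ against the dominant weight $\mu$, contradicting $\mu \in \bar{A}$, where $A = \{h \mid \gamma^\vee(h) > 0 \text{ for all } \gamma \in \Delta^+\}$. Since $N_\mu \subseteq \Delta^+$, the assertion ``$\cC_{\mu + w\al_i}$ is trivial'' in (2) then coincides verbatim with the assertion $w\al_i \notin N_\mu$, and I will use the latter formulation of (2). (Finiteness of $N_\mu$, needed in order to speak of its Scopes chambers, follows from the hypothesis that the support of $\cC$ is bounded above by $\Lambda$, so that only finitely many $\al \in \Delta^+$ satisfy $\mu + \al \le \Lambda$; and $\cC_\nu \ne 0$ forces $\cC_\mu \ne 0$, so $\mu$ genuinely lies in the support.)

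For the equivalence (1) $\Leftrightarrow$ (2): by \cref{lem:Chuang-Rouquier1}, the functor $\eF_i^{(k)}\colon \cC_\nu \to \cC_{s_i\nu}$ is a Scopes equivalence if and only if $\cC_{\nu + \al_i}$ is trivial. It remains to note that vanishing of a weight category is a $W$-invariant property of the weight: for each simple $j$ the Rickard complex $\Theta_j$ is invertible up to homotopy, hence induces an equivalence $D^b(\cC_\lambda) \cong D^b(\cC_{s_j\lambda})$, so $\cC_\lambda = 0$ if and only if $\cC_{s_j\lambda} = 0$; an induction on the length of $w$ then gives $\cC_{\nu + \al_i} = 0 \iff \cC_{w(\nu + \al_i)} = 0$. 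As $w(\nu + \al_i) = \mu + w\al_i$, this is exactly (2).

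For the equivalence (2) $\Leftrightarrow$ (3): the alcoves $wA$ and $ws_iA$ are adjacent and are separated by the single wall $H_{w\al_i}$; equivalently, applying $w$ to the standard identity $s_i\Delta^+ = (\Delta^+ \setminus \{\al_i\}) \cup \{-\al_i\}$ yields $ws_i\Delta^+ = (w\Delta^+ \setminus \{w\al_i\}) \cup \{-w\al_i\}$. Intersecting with $N_\mu$, and using that $N_\mu$ contains only positive roots while $-w\al_i$ is negative (as $w\al_i \in \Delta^+$), we obtain $w\Delta^+ \cap N_\mu = ws_i\Delta^+ \cap N_\mu$ precisely when $w\al_i \notin N_\mu$. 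By \cref{lem:same-scopes}(1) applied with $N = N_\mu$, this set equality is in turn equivalent to $wA$ and $ws_iA$ lying in the same Scopes chamber. Hence (3) $\iff w\al_i \notin N_\mu \iff$ (2), closing the cycle.

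I expect the positivity of $w\al_i$ — and more generally matching the $\mathfrak{sl}_2$-root-string picture through $\nu$ with the one through $\mu$ under the $W$-action — to be the only step needing genuine care. Once $w\al_i \in \Delta^+$ is established, conditions (2) and (3) become manifestly the same statement $w\al_i \notin N_\mu$, and (1) is tied to them purely through \cref{lem:Chuang-Rouquier1} and the $W$-invariance of which weight categories vanish; the remaining points (finiteness of $N_\mu$, $\cC_\mu \ne 0$) are routine bookkeeping.
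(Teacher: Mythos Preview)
Your proof is correct and follows essentially the same route as the paper: for $(1)\Leftrightarrow(2)$ you combine \cref{lem:Chuang-Rouquier1} with the $W$-invariance of triviality coming from the Chuang--Rouquier/Rickard equivalences, exactly as the paper does; for $(2)\Leftrightarrow(3)$ you argue via \cref{lem:same-scopes}(1) and the identity $ws_i\Delta_+=(w\Delta_+\setminus\{w\al_i\})\cup\{-w\al_i\}$, which is just a root-system restatement of the paper's geometric observation that $H_{w\al_i}$ is the unique wall separating $wA$ and $ws_iA$. Your explicit check that $w\al_i\in\Delta_+$ (via $(w\al_i)^\vee(\mu)=k>0$ and dominance of $\mu$) is a useful detail the paper leaves implicit, and your remarks on finiteness of $N_\mu$ are likewise helpful bookkeeping, but none of this constitutes a genuinely different approach.
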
 
\begin{proof}
$(i)\Leftrightarrow(ii)$: By \cite{chuangDerivedEquivalences2008}, since $w(\nu+\al_i)=\mu+w\al_i$, the categories $\cC_{\mu+w\al_i}$ and $\cC_{\nu+\al_i}$ are derived equivalent. Thus, if one is trivial, the other is.  The result now follows from Lemma \ref{lem:Chuang-Rouquier1}.  

$(ii)\Leftrightarrow(iii)$: The only hyperplane separating $wA$ and $ws_iA$ is $H_{w\al_i}$, so these lie in a common Scopes chamber if and only if $w\al_i\notin N_{\mu}$.  
  \end{proof}
This equivalence will induce a bijection of simple modules, matching the action of the Kashiwara operator $f_i^k$ in the crystal structure on simples.  
  
Combining Lemma \ref{lem:same-scopes} and Lemma \ref{lem:Chuang-Rouquier}, we arrive at the main result of this paper.  Consider $w,w'\in W$ and as above, $\mu$ dominant and $\nu=w^{-1}\mu,\nu'=(w')^{-1}\mu$.
\begin{theorem}
 If $wA$ and $w'A$ lie in the same Scopes chamber for the set $N_{\mu}$, then we have an equivalence of abelian categories $\cC_{\nu'}\cong \cC_{\nu}$.  
\end{theorem}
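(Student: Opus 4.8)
The plan is to reduce the statement directly to the two preceding lemmas, using Lemma \ref{lem:same-scopes}(2) to break up the passage from $wA$ to $w'A$ into a sequence of single alcove-steps, each of which is handled by Lemma \ref{lem:Chuang-Rouquier}. The key point is that a single step $ws_{i_1}\cdots s_{i_{k-1}}A \to ws_{i_1}\cdots s_{i_k}A$ is exactly the geometric picture governed by Lemma \ref{lem:Chuang-Rouquier}, provided we keep track of which weights in the orbit of $\mu$ these alcoves correspond to, and provided all these weights are still in the support of $\cC$.

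\emph{Setup.} First I would observe that since $\mu$ is dominant and positive level, every alcove $uA$ with $u \in W$ corresponds to the weight $u^{-1}\mu$ in the orbit of $\mu$ (the stabilizer subtlety: $uA = u'A$ forces $u = u'$ by simple transitivity on alcoves, but $u^{-1}\mu = (u')^{-1}\mu$ is all we need, so there is no ambiguity in the weight). By hypothesis $wA$ and $w'A$ lie in the same Scopes chamber for $N_\mu$. Apply Lemma \ref{lem:same-scopes}(2): there is a sequence $i_1, \dots, i_p$ with $w' = w s_{i_1}\cdots s_{i_p}$ such that each intermediate alcove $w_k A := w s_{i_1}\cdots s_{i_k}A$ (for $k = 0, \dots, p$) lies in that same Scopes chamber.

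\emph{Inductive step.} Next I would show that for each $k$, the functor $\eF_{i_k}^{(m)}$ (for the appropriate $m = \alpha_{i_k}^\vee(\nu_{k-1})$, where $\nu_{k-1} = w_{k-1}^{-1}\mu$) gives a Scopes equivalence $\cC_{\nu_{k-1}} \cong \cC_{\nu_k}$ — or its inverse $\eE_{i_k}^{(m)}$ does, depending on the sign of $\alpha_{i_k}^\vee(\nu_{k-1})$. The point is that the two alcoves $w_{k-1}A$ and $w_k A = w_{k-1}s_{i_k}A$ are separated only by the hyperplane $H_{w_{k-1}\alpha_{i_k}}$, and since both lie in the same Scopes chamber, $w_{k-1}\alpha_{i_k} \notin N_\mu$. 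Setting $\nu = \nu_{k-1}$ and $w = w_{k-1}$ in Lemma \ref{lem:Chuang-Rouquier} — after possibly replacing $i_k$ by the sign conventions so that $\alpha_{i_k}^\vee(\nu_{k-1}) > 0$, and noting $s_{i_k}\nu_{k-1} = \nu_k$ — condition (iii) of that lemma holds, hence condition (i) holds, giving the single-step equivalence. If instead $\alpha_{i_k}^\vee(\nu_{k-1}) < 0$, swap the roles of $\nu_{k-1}$ and $\nu_k$ and apply the lemma in that direction; the case $\alpha_{i_k}^\vee(\nu_{k-1}) = 0$ cannot occur since $w_{k-1}A \neq w_k A$. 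Composing these $p$ equivalences in order gives $\cC_{\nu'} = \cC_{\nu_p} \cong \cC_{\nu_{p-1}} \cong \cdots \cong \cC_{\nu_0} = \cC_\nu$.

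\emph{Main obstacle.} The routine part is the bookkeeping; the one place that needs care is \textbf{making sure all the intermediate weights $\nu_k$ actually lie in the support of $\cC$}, so that Lemma \ref{lem:Chuang-Rouquier} is applicable at each step. This follows because all the $w_k A$ lie in the \emph{same} Scopes chamber as $wA$ and $w'A$, and $\nu = w^{-1}\mu$, $\nu' = (w')^{-1}\mu$ are genuine weights of $\cC$; but one should confirm that being in a common Scopes chamber for $N_\mu$ is enough to propagate nontriviality of $\cC_{\nu_k}$ — which it is, since each single step is an equivalence of the corresponding (a priori possibly trivial) categories, and an equivalence from a nonzero category has nonzero target. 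So I would phrase the induction so that nonvanishing of $\cC_{\nu_k}$ is carried along as part of the inductive hypothesis, starting from $\cC_\nu \neq 0$. The other minor subtlety is the sign convention on $s_i$ versus the requirement $\alpha_i^\vee(\nu) = k > 0$ in Lemma \ref{lem:Chuang-Rouquier1}; this is handled by the symmetry of the statement (if $\eF_i^{(k)}$ is an equivalence so is its adjoint $\eE_i^{(k)}$), and does not cause real trouble.
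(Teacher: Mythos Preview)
Your proposal is correct and follows exactly the approach the paper indicates: the paper's entire proof is the sentence ``Combining Lemma~\ref{lem:same-scopes} and Lemma~\ref{lem:Chuang-Rouquier}, we arrive at the main result'', and you have simply spelled this combination out carefully. One tiny quibble: the case $\alpha_{i_k}^\vee(\nu_{k-1}) = 0$ \emph{can} occur (precisely when $\mu$ has nontrivial stabilizer, so that $w_{k-1}A \neq w_kA$ but $\nu_{k-1} = \nu_k$), but it is harmless since then the two categories are literally equal and no functor is needed.
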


Furthermore, some important examples of interest to us, such as Schur algebras in positive characteristic, (cyclotomic) $q$-Schur algebras, and categories $\cO$ for Cherednik algebras of $G(\ell,1,n)$ are 
 {\it highest weight categorifications} in the sense of \cite{losevHighestWeight2013}.  
In particular, the category $\mathcal{C}$ has a highest weight structure with standard objects $\Delta_L$ for $L\in I=\Irr(\mathcal{C})$.  Furthermore, the indexing set $I$ has a partition $I=\cup_{a\in \mathcal{A}^{(i)}}I_a^{(i)}$ indexed by a smaller set $\mathcal{A}^{(i)}$ such that there is a bijection $I_a^{(i)}\cong\{-,+\}^{n_a}$ for some $n_a\geq 0$.  The functor $\eE_i^{(k)}$ (resp. $\eF_i^{(k)}$) sends $\Delta_L$ for $L\in I_a^{(i)}$ to module filtered by $\Delta_{L'}$ for $L'\in I_a^{(i)}$, obtained all different ways of turning a subset of size $k$ of the $+$'s to $-$'s in the sign sequence of $L$ (resp. $+$'s to $-$'s).  

For Schur algebras in characteristic $p$, the sets $I_a^{(i)}$ are the subsets of partitions related by only adding and removing boxes of residue $i$, and similarly for (cyclotomic) $q$-Schur algebras, and categories $\cO$ for Cherednik algebras of $G(\ell,1,n)$.

 Let $\mathcal{B}\subset \mathcal{C}$ be a Serre subcategory of $\mathcal{C}$ closed under the action of $\eE_i^{(k)}$ and $\eF_i^{(k)}$.  If $\mathcal{C}=A\mmod$ for a finite dimensional algebra, we can write $\mathcal{B}=\{M\in A\mmod \mid eM=0\}$ for some idempotent $e$.  The quotient $\bar{\mathcal{C}}=\mathcal{C}/\mathcal{B}=eAe\mmod$ is no longer necessarily a highest weight category, but it still has distinguished images of the standard modules, which we call  {\bf Specht modules} $S_L$ for $L\in I$.  By definition, the functors $\eE_i^{(k)}$ and $\eF_i^{(k)}$ send Specht-filtered modules to Specht-filtered modules with the same combinatorics as standards in $\mathcal{C}$.

  This includes examples such as the modules over the symmetric group, Hecke, and Ariki-Koike algebras are quotient categories (that is, modules over a corner algebra $eAe$) of a highest weight categorification.    
The multiplicities of simple modules in Specht modules in these cases, called {\bf decomposition numbers}, play a central role in modular representation theory.  We can compare these in different weight spaces in the same orbit of the Weyl group using the observation that if $\al_i^{\vee}(\nu)=k>0$ the Kashiwara operator $\tilde{f}_i^k$ induces a bijection $\Irr(\mathcal{C}_\nu)\cong \Irr(\mathcal{C}_{s_i\nu})$ even if the Chuang-Rouquier equivalence is not exact, and so any reduced word $w=s_{i_m}\cdots s_{i_1}$ induces a bijection $ \Irr(\mathcal{C}_\mu)\cong \Irr(\mathcal{C}_{w\mu})$.

\begin{proposition}
	Let $\mathcal{C}$ be a highest weight categorification with subcategory $\mathcal{B}$ as above, and assume that $\nu,i$ are chosen so that the conditions of Lemma \ref{lem:Chuang-Rouquier1} hold.  \begin{enumerate}
		\item  The functor $\mathcal{F}_i^{(k)}\colon \mathcal{C}_{\nu}\to \mathcal{C}_{s_i\nu}$ is an equivalence of highest weight categories sending $\mathcal{F}_i^{(k)}(\Delta_{L})=\Delta_{\tilde{f}_i^kL}$.  Thus, we also have $\mathcal{F}_i^{(k)}(S_{L})=S_{\tilde{f}_i^kL}$. 
		\item We have an equality of  decomposition numbers $[S_L:M]=[S_{\tilde{f}_i^kL}:\tilde{f}_i^kM]$ for all $L,M\in \Irr(\mathcal{C})$.
	\end{enumerate} 
\end{proposition}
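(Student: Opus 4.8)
The plan is to bootstrap everything from Lemma~\ref{lem:Chuang-Rouquier1} together with the combinatorial description, recalled just above, of how the divided power functors act on standard objects in a highest weight categorification.

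For part~(1), I would start from Lemma~\ref{lem:Chuang-Rouquier1}: its hypotheses are precisely $\al_i^{\vee}(\nu)=k>0$ and $\mathcal{C}_{\nu+\al_i}$ trivial, so it already gives that $\mathcal{F}_i^{(k)}\colon\mathcal{C}_{\nu}\to\mathcal{C}_{s_i\nu}$ is an equivalence of abelian categories, with quasi-inverse $\mathcal{E}_i^{(k)}$. The real work is to match this with the highest weight structure, and the key step is to pin down the sign sequences of the simples of $\mathcal{C}_{\nu}$. If $L\in\Irr(\mathcal{C}_{\nu})$ lies in the part $I_a^{(i)}\cong\{-,+\}^{n_a}$, then turning a $-$ of its sign sequence into a $+$ raises the weight by $\al_i$ (forced by $\mathcal{E}_i^{(1)}\colon\mathcal{C}_{\mu}\to\mathcal{C}_{\mu+\al_i}$ together with the combinatorial formula for $\mathcal{E}_i^{(1)}$ on standards), so triviality of $\mathcal{C}_{\nu+\al_i}$ forces $L$ to have no $-$'s at all, so $L$ is the all-$+$ element of $I_a^{(i)}$, and comparing $i$-signatures then gives $n_a=k$. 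Hence there is exactly one way to turn $k$ of the $+$'s of $L$ into $-$'s, so $\mathcal{F}_i^{(k)}\Delta_L$, which a priori is filtered by the standards $\Delta_{L'}$ indexed by such flips, is the single standard $\Delta_{L'}$ with $L'$ the all-$-$ element of $I_a^{(i)}$; that is, $L'=\tilde{f}_i^{k}L$, of weight $\nu-k\al_i=s_i\nu$. Since $\mathcal{F}_i^{(k)}$ is an equivalence and $\tilde{f}_i^{k}$ is a bijection $\Irr(\mathcal{C}_{\nu})\cong\Irr(\mathcal{C}_{s_i\nu})$, this shows $\mathcal{F}_i^{(k)}$ carries the standards of $\mathcal{C}_{\nu}$ bijectively onto those of $\mathcal{C}_{s_i\nu}$, and hence (with its quasi-inverse) is an equivalence of highest weight categories in the sense of \cite{losevHighestWeight2013}; and since an exact equivalence preserves heads, applying $\cosoc$ to $\mathcal{F}_i^{(k)}\Delta_L=\Delta_{\tilde{f}_i^{k}L}$ upgrades this to $\mathcal{F}_i^{(k)}L=\tilde{f}_i^{k}L$ on simples.

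Then I would pass to the quotient $\bar{\mathcal{C}}=\mathcal{C}/\mathcal{B}$. Since $\mathcal{B}$ is Serre and stable under \emph{both} $\mathcal{E}_i^{(k)}$ and $\mathcal{F}_i^{(k)}$, both the equivalence and its quasi-inverse descend, along with the natural isomorphisms witnessing that they are quasi-inverse, so $\mathcal{F}_i^{(k)}$ induces an equivalence $\bar{\mathcal{C}}_{\nu}\cong\bar{\mathcal{C}}_{s_i\nu}$. As $S_L$ is by definition the image of $\Delta_L$ in $\bar{\mathcal{C}}$, the identity $\mathcal{F}_i^{(k)}\Delta_L=\Delta_{\tilde{f}_i^{k}L}$ descends at once to $\mathcal{F}_i^{(k)}S_L=S_{\tilde{f}_i^{k}L}$, and likewise this equivalence carries the simple of $\bar{\mathcal{C}}_{\nu}$ labelled by $M$ to the one labelled by $\tilde{f}_i^{k}M$. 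Part~(2) is then just the fact that an equivalence of abelian categories preserves composition-factor multiplicities, applied to $S_L$:
\[
[S_L:M]=[\mathcal{F}_i^{(k)}S_L:\mathcal{F}_i^{(k)}M]=[S_{\tilde{f}_i^{k}L}:\tilde{f}_i^{k}M],
\]
with both sides vanishing when $M$ (equivalently $\tilde{f}_i^{k}M$) lies in $\mathcal{B}$.

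The step I expect to require real care is the combinatorial one in the second paragraph: extracting from the single vanishing hypothesis $\mathcal{C}_{\nu+\al_i}=0$ both that every part $I_a^{(i)}$ meeting $\Irr(\mathcal{C}_{\nu})$ has $n_a=k$ and that the simples of $\mathcal{C}_{\nu}$ are exactly its all-$+$ elements, since this is precisely what collapses the standard filtration of $\mathcal{F}_i^{(k)}\Delta_L$ to a single term. Everything afterwards is formal; the one place the hypotheses on $\mathcal{B}$ are genuinely needed is in ensuring that the functor \emph{and} its inverse both descend to $\bar{\mathcal{C}}$.
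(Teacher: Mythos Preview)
Your argument is correct and follows essentially the same route as the paper: use the vanishing of $\mathcal{C}_{\nu+\al_i}$ to force each simple of weight $\nu$ to sit at the extreme end of its family $I_a^{(i)}$ (so $n_a=k$), deduce that $\mathcal{F}_i^{(k)}\Delta_L$ collapses to the single standard $\Delta_{\tilde f_i^k L}$, and then read off the Specht and decomposition-number statements formally. The only discrepancy is a harmless sign-convention swap (you make the simples all-$+$ where the paper makes them all-$-$), which is already ambiguous in the paper's own description of the functors; your added care about why the equivalence and its inverse descend to $\bar{\mathcal{C}}$ is a genuine improvement in detail over the paper's terse ``follows immediately.''
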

\begin{proof}
	\begin{enumerate}[wide]
		\item If the conditions of Lemma \ref{lem:Chuang-Rouquier} hold, then any simple module $L$ of weight $\mu$ must correspond to the sign sequence $\{-^k\}$, since if there were a plus sign, then $\eE_iL\neq 0$, which Lemma \ref{lem:Chuang-Rouquier1} shows implies that we do not obtain a Scopes equivalence.  The highest weight conditions imply that $\eF_i^{(k)}\Delta_L=\Delta_{\tilde{f}_i^kL}$. This implies that $ \eF_i^{(k)}$ is an equivalence of abelian categories and sends standards to standards and thus is an equivalence of highest weight categories.  The connection between Specht modules follows immediately.
		\item This follows immediately from the functor sending simples to simples and Spechts to Spechts.  \qedhere
	\end{enumerate}
\end{proof}
Applying this inductively, if two weight spaces are Scopes equivalent, then their decomposition matrices are the same.   For Ariki-Koike algebras, this is proven with a roughly equivalent proof in \cite[Prop. 5.5]{dellarcipreteEquivalenceDecomposition2023}.
   
In all the cases discussed above, we can index simples by abacus diagrams, and Lemma \ref{lem:Chuang-Rouquier} will apply when for every object in our block, there is no way to push a bead from the $(i+1)$st runner to the $i$th, i.e., for every bead on the $(i+1)$st, the position to its left on the $i$th runner is occupied.  Note that a necessary condition for this to hold in the Ariki-Koike case is given in 
 \cite[Lem. 5.1]{dellarcipreteEquivalenceDecomposition2023}.

In this case, $k$ is the number of beads on the $i$th runner where the spot to the right is empty, and the Kashiwara operator $f_i^k$ acts by pushing these beads right, that is, by swapping the runners.  As usual, when $i=e$, we have to interpret all these statements as comparing the $e$th and first runners with a shift.  

For example, in the picture below, we first show the $i$ and $(i+1)$ runners of the diagram where the Kashiwara operator ${e}_i$ acts non-trivially, to give the second picture where it acts non-trivially.  So, Lemma \ref{lem:Chuang-Rouquier} applies when all abacus diagrams in your block look like the second diagram and never like the first.  As discussed above, the Kashiwara operator $f_i^2$ sends the second diagram to the third by moving the two dots that have space to move right.  This has the same effect as swapping the runners.\smallskip

\centerline{
\begin{tikzpicture}\tikzset{yscale=0.3,xscale=0.3}
\foreach \k in {0,1} {
\fill [color=brown] (\k-.1,0)--(\k-.1,7) -- (\k+0.1,7)-- (\k+.1,0)--(\k-.1,0);
\fill[color=brown](\k-.1,7.1)--(\k+.1,7.1)--(\k+.1,7.3)--(\k-.1,7.3)--(\k-.1,7.1);
\fill[color=brown](\k-.1,-0.1)--(\k+.1,-0.1)--(\k+.1,-0.3)--(\k-.1,-0.3)--(\k-.1,-0.3);
\fill[color=brown](\k-.1,-0.4)--(\k+.1,-0.4)--(\k+.1,-0.6)--(\k-.1,-0.6)--(\k-.1,-0.6);
}
\foreach \k in {4.5,5.5,6.5} {
\fill[color=bead] (0,\k) circle (10pt);}
\foreach \k in {3.5,5.5,6.5} {
\fill[color=bead] (1,\k) circle (10pt);}
\draw(-.5,7)--(1.5,7); 
\foreach \a in {0,1,} {
\foreach \k in {.5,...,6.5}{
\node at (\a,\k)[color=bead]{$-$};};};
\end{tikzpicture}\qquad 
\begin{tikzpicture}\tikzset{yscale=0.3,xscale=0.3}
\foreach \k in {0,1} {
\fill [color=brown] (\k-.1,0)--(\k-.1,7) -- (\k+0.1,7)-- (\k+.1,0)--(\k-.1,0);
\fill[color=brown](\k-.1,7.1)--(\k+.1,7.1)--(\k+.1,7.3)--(\k-.1,7.3)--(\k-.1,7.1);
\fill[color=brown](\k-.1,-0.1)--(\k+.1,-0.1)--(\k+.1,-0.3)--(\k-.1,-0.3)--(\k-.1,-0.3);
\fill[color=brown](\k-.1,-0.4)--(\k+.1,-0.4)--(\k+.1,-0.6)--(\k-.1,-0.6)--(\k-.1,-0.6);
}
\foreach \k in {3.5,4.5,5.5,6.5} {
\fill[color=bead] (0,\k) circle (10pt);}
\foreach \k in {5.5,6.5} {
\fill[color=bead] (1,\k) circle (10pt);}
\draw(-.5,7)--(1.5,7); 
\foreach \a in {0,1,} {
\foreach \k in {.5,...,6.5}{
\node at (\a,\k)[color=bead]{$-$};};};
\end{tikzpicture}
\qquad 
\begin{tikzpicture}\tikzset{yscale=0.3,xscale=0.3}
\foreach \k in {0,1} {
\fill [color=brown] (\k-.1,0)--(\k-.1,7) -- (\k+0.1,7)-- (\k+.1,0)--(\k-.1,0);
\fill[color=brown](\k-.1,7.1)--(\k+.1,7.1)--(\k+.1,7.3)--(\k-.1,7.3)--(\k-.1,7.1);
\fill[color=brown](\k-.1,-0.1)--(\k+.1,-0.1)--(\k+.1,-0.3)--(\k-.1,-0.3)--(\k-.1,-0.3);
\fill[color=brown](\k-.1,-0.4)--(\k+.1,-0.4)--(\k+.1,-0.6)--(\k-.1,-0.6)--(\k-.1,-0.6);
}
\foreach \k in {5.5,6.5} {
\fill[color=bead] (0,\k) circle (10pt);}
\foreach \k in {3.5,4.5,5.5,6.5} {
\fill[color=bead] (1,\k) circle (10pt);}
\draw(-.5,7)--(1.5,7); 
\foreach \a in {0,1,} {
\foreach \k in {.5,...,6.5}{
\node at (\a,\k)[color=bead]{$-$};};};
\end{tikzpicture}}

\begin{definition}\label{def:our-RoCK}
    We call a weight space category {\bf RoCK} or a {\bf RoCK block} if the corresponding Scopes chamber is RoCK.  By construction, all RoCK blocks for a given Weyl chamber are equivalent as abelian categories.  
\end{definition}

We do not aim here to comprehensively address the question of when weight categories are equivalent; nothing we have written above precludes the existence of an equivalence $\cC_{\nu'}\cong \cC_{\nu}$ if $wA$ and $w'A$ do not lie in the same Scopes chamber.  There are two obvious situations where this can happen: 
\begin{enumerate}
  \item If $\cC_{\mu+w\al_i}$ is nontrivial, but the representation of the root $\mathfrak{sl}_2$ for $w\al_i$ generated by the $\mu$ weight space is isotypic (all its highest weight vectors are of the same weight), then some shift of the Chuang-Rouquier functor $\Theta_i$ is exact, and thus induces an equivalence of abelian categories.  The easiest way this can happen is condition (2) of Lemma \ref{lem:Chuang-Rouquier}, but it could be that all these highest weight vectors have weight $\mu+rw\alpha$ for some $r$.  In this case, every projective object $P$ in $\cC_{\nu}$ can be written as $P=\eF_i^{(r)}P'$, and there is an equivalence of abelian categories sending $ P\mapsto \eF_i^{(k+r)}P'$.   
  
We could tighten our results a bit by defining $N'_{\mu}$ to be the set of roots which do not have this isotypic property and only considering Scopes chambers with respect to this set. This suffers from the issue of not being only determined by the support of the representation, and in the vast majority of cases, $N_{\mu}$ and $N'_{\mu}$ will be equivalent.   
  \item 
If $\mu$ has nontrivial stabilizer, then we can have $\nu'=\nu$, while $wA$ and $w'A$ do not lie in the same Scopes chamber.  This tells us that the induced autoequivalence of $\cC_{\nu'}$ will not be exact, but this does not change that the categories are exactly equal.  

Of course, the stabilizer of $\mu$ is generated by $s_{\alpha}$ for $\alpha^{\vee}(\mu)=0$, which can only happen if $\alpha$ is simple.    
Let $W_\mu$ be the stabilizer; we can simplify the determination of the different blocks that appear in this case by only considering $wA$ in a fundamental domain of $W_\mu$; the most natural choice is to consider the chamber cut out by $\alpha^{\vee}=0$ for the $\alpha$ such that $\alpha^{\vee}(\mu)=0$ which contains the fundamental alcove.  This has the effect of requiring $w$ to be a shortest right coset representative for $W_\mu$.  \end{enumerate}

\subsection{Irreducible support}
\label{sec:irreducible}

Recall that the {\bf support} of a categorical module $\cC$ for $\fg$ is the set of weights $\mu$ such that $\cC_\mu \neq 0$; by analogy, for a usual linear representation $V$ of $\fg$, we will call the set of weights with nonzero weight spaces the support of $V$.  Most of the categorical modules appearing in representation theory (of which the author knows) satisfy the following property: 
\begin{enumerate}
    \item [$(*)$] The support of $\cC$ is equal to the support of a simple highest weight module $V(\Lambda)$.  
\end{enumerate}
This will happen in many cases where the representation $K^0_{\mathbb{C}}(\cC)$ is very much not irreducible, but the support of one simple summand contains the support of all other summands.  For example, if we consider a tensor product $V_{\Lambda}\otimes V_{\Lambda'}$, this is typically not irreducible, but its support is the same as the Cartan component $V_{\Lambda+\Lambda'}$.

This is a particularly nice situation, since we can encode the support of $\cC$ in purely combinatorial terms.  
  Let $\leq$ denote the usual root order on weights, that is, the transitive closure of $\mu-\al_i\leq \mu$ for all $\mu, i$.  Let us recall one of the standard characterizations of the support of a simple highest weight module:
 \begin{lemma}[\mbox{\cite[Prop.\ 11.3(a)]{kacInfiniteDimensional1990}}]\label{lem:simple-support}
     The following are equivalent for a weight $\mu$:
     \begin{enumerate}
         \item The weight $\mu$ is in the support of $V(\Lambda)$.
         \item There is an element $w\in W$ such that $w\mu$ is dominant and $w\mu\leq \Lambda$.
         \item For all $w\in W$, we have $w\mu\leq \Lambda$.
     \end{enumerate}
 \end{lemma}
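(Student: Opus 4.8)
The plan is to prove the cycle of implications $(1)\Rightarrow(3)\Rightarrow(2)\Rightarrow(1)$; two of the three arrows are formal and the content sits in $(2)\Rightarrow(1)$. Write $Q=\bigoplus_i\Z\al_i$ and $Q_+=\bigoplus_i\Znn\al_i$, so that $\mu\leq\mu'$ means $\mu'-\mu\in Q_+$, and let $\operatorname{ht}\colon Q_+\to\Znn$ be the height function. For $(1)\Rightarrow(3)$: integrability means $V(\La)$ is a direct sum of finite-dimensional modules over the $\mathfrak{sl}_2$ attached to each simple root $\al_i$, so $s_i$ preserves every weight-space dimension and therefore the support of $V(\La)$ is stable under all of $W$; since $V(\La)$ is a highest weight module, that support also lies in $\La-Q_+$, and combining the two facts gives $w\mu\leq\La$ for all $w$ whenever $\mu\in\operatorname{supp}V(\La)$. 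For $(3)\Rightarrow(2)$: assuming $w\mu\leq\La$ for all $w$, the set $\{\,\La-w\mu\mid w\in W\,\}$ lies in $Q_+$, so pick $w$ minimizing $\operatorname{ht}(\La-w\mu)$ and set $\nu=w\mu$. If $\nu$ were not dominant, some $\al_i^\vee(\nu)<0$, and then $\La-s_i\nu=(\La-\nu)-\al_i^\vee(\nu)\,\al_i$ would still lie in $Q_+$ but have strictly smaller height than $\La-\nu$ — a contradiction. So $\nu$ is dominant and $\nu\leq\La$, which is $(2)$.

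For $(2)\Rightarrow(1)$, fix $w$ with $\nu=w\mu$ dominant and $\nu\leq\La$. By the $W$-stability of $\operatorname{supp}V(\La)$ just used, it suffices to prove $\nu\in\operatorname{supp}V(\La)$; then $\mu=w^{-1}\nu$ lies there too. Thus everything reduces to the classical saturation statement: \emph{every dominant weight $\leq\La$ is a weight of $V(\La)$}. I would argue by induction on $\operatorname{ht}(\La-\nu)$, the case $\nu=\La$ being immediate. For $\nu<\La$ dominant, one seeks a positive real root $\gamma$ with $\nu+\gamma$ dominant and $\nu+\gamma\leq\La$; then $\nu+\gamma\in\operatorname{supp}V(\La)$ by induction, and since $V(\La)$ is integrable for the $\mathfrak{sl}_2$ attached to $\gamma$ while $\gamma^\vee(\nu+\gamma)=\gamma^\vee(\nu)+2\geq2$, the $\mathfrak{sl}_2$-string through $\nu+\gamma$ in $V(\La)$ is a full symmetric string and descends at least one step, so $\nu=(\nu+\gamma)-\gamma\in\operatorname{supp}V(\La)$.

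The step I expect to be the real obstacle is producing this $\gamma$ when $\nu$ lies on walls of the dominant cone: in affine type $\La-\nu$ can be a multiple of the imaginary root $\delta$, and adding a single simple (or real) root to $\nu$ need not stay dominant, so the naive descent can stall. The clean way to bypass this is to cite the convexity description $\operatorname{supp}V(\La)=(\La+Q)\cap\operatorname{conv}(W\La)$ from \cite[Ch.\ 11]{kacInfiniteDimensional1990} (equivalently, the Weyl--Kac character formula): a dominant $\nu$ with $\nu\leq\La$ automatically satisfies $w\nu\leq\nu\leq\La$ for every $w$ — because $\nu-w\nu\in Q_+$ for dominant $\nu$, by induction on $\ell(w)$ — hence $\nu$ lies in $\operatorname{conv}(W\La)$, while $\nu\in\La+Q$ is obvious, so $\nu\in\operatorname{supp}V(\La)$. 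This convexity/character-formula input is the one genuinely nontrivial ingredient, which is why the lemma is credited to \cite{kacInfiniteDimensional1990}; the remaining manipulations are the elementary bookkeeping above.
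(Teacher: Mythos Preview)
The paper does not give its own proof of this lemma; it is stated with a citation to \cite[Prop.\ 11.3(a)]{kacInfiniteDimensional1990} and then used as a black box. Your sketch therefore already goes further than the paper, and the overall strategy is sound: $(1)\Rightarrow(3)$ and $(3)\Rightarrow(2)$ are indeed the formal steps, and you correctly isolate $(2)\Rightarrow(1)$ as the place where genuine input from Kac is required.

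One small slip in $(3)\Rightarrow(2)$: the identity should read $\La-s_i\nu=(\La-\nu)+\al_i^\vee(\nu)\,\al_i$, not with a minus sign; since $\al_i^\vee(\nu)<0$ this still has strictly smaller height, so your conclusion is unaffected. Your acknowledgment that the naive inductive descent for $(2)\Rightarrow(1)$ can stall in affine type when $\La-\nu\in\Z_{\geq 0}\,\delta$, and that one must invoke the convexity description of $\operatorname{supp}V(\La)$ (equivalently the Weyl--Kac character formula) from Kac, is exactly the point: that is precisely the content being cited, and the paper makes no attempt to reprove it.
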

Therefore, each $\mu$ in the support of $V(\Lambda)$ is of the form
\[\mu=\Lambda-\sum_{i=1}^{e} b_i\al_i\] and any dominant $\mu$ with this form is in the support by $(2)\Rightarrow(1)$.  Thus, given a simple root $\alpha$, we wish to determine if $\mu+\alpha$ is in the support of $\cC$, i.e. if $\alpha\in N_{\mu}$.  This can only happen if $\mu+\al\leq \Lambda$, and in some corner cases, $\mu+\al$ will not be dominant, and we need to check the dominant weight in its orbit also satisfies $w(\mu+\al)\leq \Lambda$.  Note that this implies that $N_{\mu}$ is finite in this case.  

This is easy to check for one orbit, but the reader will correctly note that the set of positive roots is infinite, making it hard to check by hand whether this holds for each of them.  However, we can exploit the fact that addition by $\delta$ commutes with every element of $W$.  
Fix a set $\{\beta_1,\dots, \beta_r\}\subset \Delta^+$ of positive roots with the property that every affine root is of the form $\pm \beta_i+k\delta$ for a unique choice of sign, $i\in [1,r]$ and $k\in \Z$.    Note that in twisted cases, not every element of this form is a root. So, we have that $w\cdot (\mu\pm \beta_i+k\delta)$ is dominant if and only if $w\cdot (\mu\pm \beta_i)$ is.  For each $i\in [1,r]$, we therefore have integers $k_i^{\pm}$ defined as the largest integers such that
\[w(\mu\pm \beta_i)+k_i^{\pm}\delta\leq \Lambda \qquad \forall w\in W.\]

Of course, checking this for all $w$ is equivalent to checking it only when $w(\mu\pm \beta_i)$ is dominant by Lemma \ref{lem:simple-support}.  This implies that a positive root $\pm \beta_i+k\delta$ lies in $N_\mu$ if and only if $k\leq k_i^{\pm}$.  However, as we noted earlier, there might be values of $k$ where this is not a positive root.

  If we assume our affine Lie algebra is simply laced, that is, it is of type $\widehat{A},\widehat{D}$ or $\widehat{E}$, then we can simplify by choosing our $\beta_i$ to the positive roots in the finite type subalgebra, so the set of positive roots can be described as
  \[\Delta^+=\bigcup_{\substack{i\in [1,r]\\ m\in \Z_{\geq 0}}}\{\beta_i+m\delta,-\beta_i+(m+1)\delta\} \]
and the resulting description of the Scopes chambers has a particularly nice form:
\begin{lemma}\label{lem:Scopes-walls}
  We have an equality $N_{\mu}=\{\pm \beta_i+m\delta\mid m\leq k_i^{\pm}\}$.  The walls of the Scopes chambers are defined by 
  \[\beta_i^{\vee}(\lambda)=m\delta^{\vee}(\lambda) \text{ for all } m\in [-k_i^+,k_i^-].\]
\end{lemma}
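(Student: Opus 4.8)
The plan is to deduce both assertions from the enumeration of $\Delta^+$ in the simply-laced case together with the bookkeeping set up in the paragraph preceding the lemma. Recall that under hypothesis $(*)$ and Lemma~\ref{lem:simple-support}, a positive root $\alpha$ lies in $N_\mu$ exactly when $w(\mu+\alpha)\le\Lambda$ for every $w\in W$; since $\delta$ is $W$-fixed, for $\alpha=\pm\beta_i+m\delta$ this depends only on $\mu\pm\beta_i$ and on $m$, and because $\delta$ is a strictly positive integral combination of the simple roots, the condition is monotone in $m$: it fails for $m$ large and holds for $m$ small, with cutoff $k_i^\pm$. This is exactly the statement, recalled just before the lemma, that $\pm\beta_i+m\delta\in N_\mu$ iff $m\le k_i^\pm$. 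Intersecting with the simply-laced enumeration $\Delta^+=\bigcup_i\big(\{\beta_i+m\delta\mid m\ge 0\}\cup\{-\beta_i+m\delta\mid m\ge 1\}\big)$ then yields the stated formula for $N_\mu$.

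For the walls I would first compute coroots. In an untwisted simply-laced affine algebra all real roots have the same squared length, $\delta$ is isotropic and orthogonal to every finite root, and $\delta^\vee$ is the element corresponding to $\delta$ under the invariant form; hence $(\pm\beta_i+m\delta)^\vee=\pm\beta_i^\vee+m\delta^\vee$. (In the coordinates of the previous subsection $\delta^\vee$ is simply the constant $c$ on $\mathfrak{h}^*_c$, so these are the parallel translates $\beta_i^\vee=mc$ of $\beta_i^\vee=0$, which recovers the lines $h_i-h_j=mc$ found there.) Therefore the reflection hyperplane $H_{\beta_i+m\delta}$ is $\{\lambda\mid\beta_i^\vee(\lambda)=-m\,\delta^\vee(\lambda)\}$ and $H_{-\beta_i+m\delta}$ is $\{\lambda\mid\beta_i^\vee(\lambda)=m\,\delta^\vee(\lambda)\}$.

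Finally I would assemble: as $\beta_i+m\delta$ runs over the roots of $N_\mu$ with $0\le m\le k_i^+$ we get the hyperplanes $\beta_i^\vee(\lambda)=m'\delta^\vee(\lambda)$ for $m'\in\{-k_i^+,\dots,0\}$, and as $-\beta_i+m\delta$ runs over those with $1\le m\le k_i^-$ we get them for $m'\in\{1,\dots,k_i^-\}$; the union over $i$ and over both families is the claimed list. The step needing care is that, for each $i$, these two index ranges abut without a gap, so that their union is $\{-k_i^+,\dots,k_i^-\}$. I expect this to come out of the ``irreducible support'' hypothesis: for $\mu$ dominant, the $\mathfrak{sl}_2$-string for $\beta_i$ through any $\mu+j\delta$ is an interval about its centre, which forces $\mu+\beta_i\in\mathrm{supp}\,V(\Lambda)\Rightarrow\mu-\beta_i\in\mathrm{supp}\,V(\Lambda)$, and since $\mathrm{supp}\,V(\Lambda)$ is closed under subtracting $\delta$ from dominant weights (Lemma~\ref{lem:simple-support}) the widths of these strings are monotone in $j$; together these should keep $k_i^+$ and $k_i^-$ close enough that no gap opens. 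Nailing down this contiguity in the degenerate cases is the part of the argument I would expect to require the most attention; everything else is routine manipulation of the root datum.
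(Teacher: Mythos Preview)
The paper gives no proof of this lemma: it is stated immediately after the sentence ``This implies that a positive root $\pm\beta_i+k\delta$ lies in $N_\mu$ if and only if $k\le k_i^{\pm}$,'' and is meant to follow at once from that sentence together with the displayed description of $\Delta^+$. Your argument does exactly this, and the coroot computation for the second assertion is correct.

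Your one stated worry---that the two index ranges might fail to abut---is not a genuine issue, and the $\mathfrak{sl}_2$-string argument you sketch is unnecessary. The $+\beta_i$ family contributes hyperplanes with parameter $m'=-m$ for $m\ge 0$, hence $m'\le 0$; the $-\beta_i$ family contributes $m'=m$ for $m\ge 1$, hence $m'\ge 1$. These are consecutive integers, so no gap can open between the two families regardless of the values of $k_i^{\pm}$. The only thing to check is that the endpoints of the interval $[-k_i^+,k_i^-]$ match the actual sets, i.e.\ that one family being empty is correctly recorded. This holds because $k_i^+\ge -1$ (so that $-k_i^+\le 1$ when the $+\beta_i$ family is empty) and $k_i^-\ge 0$ (so that $k_i^-\ge 0$ when the $-\beta_i$ family is empty); both bounds follow since $\mu$ is dominant and $\mu\le\Lambda$, so $\mu-\beta_i\le\Lambda$ automatically and $\mu+\beta_i$ differs from a weight $\le\Lambda$ by a single positive root, giving coefficients bounded below by $-1$.
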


Combining Lemmata \ref{lem:Chuang-Rouquier} and \ref{lem:Scopes-walls}, we arrive at an algorithm to test whether a block is RoCK and to construct blocks that are RoCK for each Weyl chamber.  Assume that the property $(*)$ holds and fix a weight $\mu$; the steps of our algorithm are:
\begin{enumerate}[wide]
  \item {\bf Find the dominant element of the orbit:} Let $w\in W$ be the element of minimal length such that $\mu'=w\mu$ is dominant.  Also, find the alcove $A'=wA$. Algorithmically, we can do this by checking the sign of $\al_i^{\vee}(\mu)$ for all $i$.  If all of these are $\geq 0$, then $\mu$ is dominant, and we are done.  Otherwise,  replace $\mu$ by $s_i\mu$ for any $i$ with $\al_i^{\vee}(\mu)<0$.  Note that this terminates since $\mu<s_i\mu\leq \Lambda$.  
  \item {\bf Find the set $N_{\mu'}$:} For each $\beta_i$, we consider $\mu\pm\beta_i$, find the dominant element of its $W$-orbit, and use this to find the bounds $k_i^{\pm}$, which specify all the Scopes walls.
  \item {\bf Check the sign on $A'$}: Check the value of the ratio $\rho=\frac{\beta_i^{\vee}}{\delta^{\vee}}$ on any element of $A'$.  If $-k_i^+\leq \rho \leq k_i^-$ for any $i$ then the block is not RoCK, and if this inequality does not hold for any $i$, the block is RoCK.    
\end{enumerate}
This algorithm sounds laborious, and indeed it is if it is done by hand; however, it is extremely efficient to when done by a computer, especially if compared to any algorithm that requires enumeration of the partitions in a block.   In particular, its complexity does not increase significantly as we add $e$-hooks to a block, whereas enumerating partitions will get much worse.  This program is publicly available for Sage \href{https://cocalc.com/share/public_paths/684c1deddca6be23d9d830298743118c07c493b6}{here on CoCalc}, and some examples are shown later in the paper.    

Let us now consider an example with $e=3$.  Let $\Lambda=2\Lambda_1+\Lambda_2+\Lambda_3$ and $\mu=\Lambda-2\al_1-2\al_2-3\al_3$.  Note that in this case, we have $W_{\mu}=\{1,s_3\}$.  
In this case, $k_{12}^-=2,k_{13}^-=3,k_{23}^-=2,k_{12}^+=1,k_{13}^+=1,k_{23}^+=1$.  Thus, the Scopes walls are given by the red hyperplanes below; the fundamental alcove is colored green, and the block closest to the fundamental in each RoCK chamber is colored red if they are in the positive Weyl chamber for $W_{\mu}$ (the one containing the fundamental alcove), and blue if they are in the negative Weyl chamber.  Thus, there are three RoCK equivalence classes of blocks under Scopes equivalence, corresponding to the three red chambers.
\begin{equation}\label{eq:sl3}
\tikz[very thick,scale=1.1,baseline]{
  \fill[blue!50!white] (3,{sqrt(3)}) -- (4,{sqrt(3)}) -- (3.5, {3/2*sqrt(3)})-- cycle;
    \fill[blue!50!white] (.5,{5/2*sqrt(3)}) -- (1.5,{5/2*sqrt(3)}) -- (1, {2*sqrt(3)})-- cycle;
  \fill[red!50!white] (0,{-2*sqrt(3)}) -- (1,{-2*sqrt(3)}) -- (.5, {-3/2*sqrt(3)})-- cycle;
  \fill[blue!50!white] (3.5,{-sqrt(3)/2}) -- (4.5,{-sqrt(3)/2}) -- (4, {-sqrt(3)})-- cycle;
   \fill[red!50!white] (-1.5,{-sqrt(3)/2}) -- (-2.5,{-sqrt(3)/2}) -- (-2, {-sqrt(3)})-- cycle;
  \fill[red!50!white] (-3,{sqrt(3)}) -- (-2,{sqrt(3)}) -- (-2.5, {3/2*sqrt(3)})-- cycle;
    
    \fill[green!50!white] (1,0) -- (1/2,{sqrt(3)/2})--(0,0)-- cycle;
\foreach \k in {-1,0,1,2} {
  \draw[red] (5,{\k*sqrt(3)/2})--(-4.5,{\k*sqrt(3)/2});  }
\foreach \k in {-5,-4,-3,-2,3,4,5} {
  \draw[gray!50!white] (5,{\k*sqrt(3)/2})--(-4.5,{\k*sqrt(3)/2});  }
  \foreach \k in {-1,0,1,2} {
 \draw[red] (2.5+\k,{5/2*sqrt(3)})-- (-2.5+\k,-{5/2*sqrt(3)});  }
  \draw[gray!50!white] (.5,{5/2*sqrt(3)})-- (-4.5,-{5/2*sqrt(3)});  
  \draw[gray!50!white] (-.5,{5/2*sqrt(3)})-- (-4.5,-{1.5*sqrt(3)});  
  \draw[gray!50!white] (-1.5,{5/2*sqrt(3)})-- (-4.5,-{.5*sqrt(3)});  
  \draw[gray!50!white] (-2.5,{5/2*sqrt(3)})-- (-4.5,{.5*sqrt(3)});  
  \draw[gray!50!white] (-3.5,{5/2*sqrt(3)})-- (-4.5,{1.5*sqrt(3)});  
  \draw[gray!50!white] (.5,{-5/2*sqrt(3)})-- (5,{2*sqrt(3)}); 
  \draw[gray!50!white] (1.5,{-5/2*sqrt(3)})-- (5,{sqrt(3)});  
  \draw[gray!50!white] (2.5,{-5/2*sqrt(3)})-- (5,0);  
  \draw[gray!50!white] (3.5,{-5/2*sqrt(3)})-- (5,{-sqrt(3)});  
  \draw[gray!50!white] (4.5,{-5/2*sqrt(3)})-- (5,{-2*sqrt(3)}); 
   \foreach \k in {-1,0,1,2} {  \draw[red] (-2.5+\k,{5/2*sqrt(3)})-- (2.5+\k,-{5/2*sqrt(3)});}
   \draw[red] (.5,{5/2*sqrt(3)})-- (5,-{2*sqrt(3)});  
 \draw[gray!50!white] (.5,-{5/2*sqrt(3)})-- (-4.5,{5/2*sqrt(3)});
  \draw[gray!50!white] (-.5,{-5/2*sqrt(3)})-- (-4.5,{1.5*sqrt(3)});  
  \draw[gray!50!white] (-1.5,{-5/2*sqrt(3)})-- (-4.5,{.5*sqrt(3)});  
  \draw[gray!50!white] (-2.5,{-5/2*sqrt(3)})-- (-4.5,{-.5*sqrt(3)});  
  \draw[gray!50!white] (-3.5,{-5/2*sqrt(3)})-- (-4.5,{-1.5*sqrt(3)});   
  \draw[gray!50!white] (1.5,{5/2*sqrt(3)})-- (5,{-sqrt(3)});  
  \draw[gray!50!white] (2.5,{5/2*sqrt(3)})-- (5,0);  
  \draw[gray!50!white] (3.5,{5/2*sqrt(3)})-- (5,{sqrt(3)});  
  \draw[gray!50!white] (4.5,{5/2*sqrt(3)})-- (5,{2*sqrt(3)});
  \draw[blue,line width=.3em] (-1.5,{5/2*sqrt(3)})-- (3.5,-{5/2*sqrt(3)});
 }  
\end{equation}  

\section{Examples}\label{sec:examples}
\subsection{Level 1}

Let $H_n(\K,q)$ be the Hecke algebra of $S_n$ (i.e. type A) over the field $\K$ for a scalar $q\in \K$, and let $e$ be the quantum characteristic, that is, the smallest integer such that $q^{e-1}+\cdots + 1=0$.  Let us emphasize that we include the case of $q=1$, in which case the quantum characteristic is the usual characteristic of the field $\K$.  In this case, we will explain how we recover the original definition of RoCK blocks (those that satisfy the conditions of \cite[Th. 2]{chuangSymmetric2002}

Let $\fg=\slehat$.  
The category $\bigoplus_{n\geq 0}H_n(\K,q)\mmod$ is a categorical $\fg$-module, categorifying the simple module with the highest weight $\Lambda_e=(1,0,\dots, 0)$.   The action of $\eE_i$ is by $i$-restriction and $\eF_i$ by $i$-induction.   We can assign a weight to each partition $\nu$ by letting $b_i$ be the number of boxes of content $i\pmod e$, and considering 
\[\mu_\nu=\Lambda_e -\sum_{i=1}^{e} b_i\al_i=(1, b_{e}-b_1, b_1-b_2,\dots, b_{e-1}-b_{e},b_e).\] In particular, $\mu_{\emptyset}=(1,0,\dots, 0)$ is the highest weight appearing. Note that the addition of an $e$-hook only changes this weight by adding $\delta$, so the image of $\mu_{\nu}$ in $\fh^*_1$ is determined just by the core of $\mu$, and there will be a unique $e$-core for each integral point of $\fh^*_1$. For example, if $e=2$, then $\fh^*_1$ is parameterized by the value of $h_1$ as before, and the $2$-cores index the points in the $W$-orbit of $h_1=0$.
\[\tikz[very thick,scale=1.6]{\draw (3,0)-- node[pos=.33,above,scale=.6,yshift=30pt]{$\ydiagram{1}$} node[circle, fill, pos=.33,scale=.6]{} node[pos=.5,above,yshift=18pt]{$\emptyset$}  node[circle, fill, pos=.5,scale=.6]{} node[pos=.66,above,scale=.6,yshift=20pt]{$\ydiagram{2,1}$} node[pos=.17,above,scale=.6,yshift=10pt]{$\ydiagram{3,2,1}$} node[circle, fill, pos=.17,scale=.6]{} node[circle, fill, pos=.66,scale=.6]{} node[circle, fill, pos=.83,scale=.6]{} node[pos=.83,above,scale=.6]{$\ydiagram{4,3,2,1}$} 
node[pos=.33,below,yshift=-5pt]{$h_1=1$} 
node[pos=.5,below,yshift=-5pt]{$h_1=0$} 
node[pos=.17,below,yshift=-5pt]{$h_1=2$} 
node[pos=.66,below,yshift=-5pt]{$h_1=-1$} 
node[pos=.83,below,yshift=-5pt]{$h_1=-2$} 
(-3,0);
}\]  

\ytableausetup{boxsize=.8em}
Similarly, 3-cores index the integral points in $\fh_1^*$ for $\widehat{\mathfrak{sl}}_3$.  We mark the point $(1,a,b,c,*)$ with $(a,b,c)$.  
  \[\tikz[very thick,scale=1.6]{
  \draw[black!50!white,dotted] (2,0)--(-2,0); \draw[black!50!white,dotted] (2,{sqrt(3)/2})--(-2,{sqrt(3)/2});  \draw[black!50!white,dotted] (2,{-sqrt(3)/2})--(-2,{-sqrt(3)/2});\draw[black!50!white,dotted] (1,{sqrt(3)})-- (-1,-{sqrt(3)}); \draw[black!50!white,dotted] (-1,{sqrt(3)})-- (1,-{sqrt(3)});
  \draw[black!50!white,dotted] (2,{sqrt(3)})-- (0,-{sqrt(3)}); \draw[black!50!white,dotted] (-2,{sqrt(3)})-- (0,-{sqrt(3)}); \draw[black!50!white,dotted] (0,{sqrt(3)})-- (2,-{sqrt(3)}); \draw[black!50!white,dotted] (0,{sqrt(3)})-- (-2,-{sqrt(3)});
    \draw[black!50!white,dotted] (2,0)-- (1,-{sqrt(3)}); \draw[black!50!white,dotted] (-2,0)-- (-1,-{sqrt(3)}); \draw[black!50!white,dotted] (1,{sqrt(3)})-- (2,0); \draw[black!50!white,dotted] (-1,{sqrt(3)})-- (-2,0); \node[draw=black,circle,fill,label=8:{$\ydiagram{3,1,1}$}] at (3/2, {sqrt(3)/2}){}; \node[scale=.7] at ({3/2+.5}, {sqrt(3)/2-.15}){$(1,0,-1)$};\node[draw=black,circle,fill,label=188:{$\ydiagram{1}$}] at (-3/2, {-sqrt(3)/2}){}; \node[scale=.7] at ({-3/2+.5}, {-sqrt(3)/2-.15}){$(-1,0,1)$};
    \node[draw=black,circle,fill,label=-8:{$\ydiagram{2,1,1}$}] at (3/2, {-sqrt(3)/2}){}; \node[scale=.7] at ({3/2+.5}, {-sqrt(3)/2+.15}){$(1,-1,0)$};\node[draw=black,circle,fill,label=-188:{$\ydiagram{1,1}$}] at (-3/2, {sqrt(3)/2}){};\node[scale=.7] at ({-3/2+.5}, {sqrt(3)/2-.15}){$(-1,1,0)$};
      \node[draw=black,circle,fill,label=below:{$\ydiagram{2}$}] at (0, {-sqrt(3)}){};     \node[scale=.7] at (.55, {-sqrt(3)}){$(0,-1,1)$};\node[draw=black,circle,fill,label=above:{$\ydiagram{3,1}$}] at (0, {sqrt(3)}){};\node[scale=.7] at (.55, {sqrt(3)}){$(0,1,-1)$};
      \node[draw=black,circle,fill,label=8:{$\emptyset$}] at (0,0){};\node[scale=.7] at (.5,-.15) {$(0,0,0)$};}\]
The weight $\mu_{\nu}$ is dominant if and only if the core is empty since no other integral weight is in the fundamental alcove.  That is, if $\mu_{\nu}=\Lambda_e-k\delta=(1,0,\dots, 0,k)$ for $k\geq 0$, the weight of the corresponding block.  The corresponding category is the principal block of $H_{ke}(\K,q)$.  

The other prominent example of another categorical action with the same set of nonzero weights is the module categories over the $q$-Schur algebra appearing endomorphisms of the permutation modules over $H_n(\K,q)$.  The blocks of this algebra are in obvious bijection with those of $H_n(\K,q)$, and the category of modules has an induced categorical action.  Thus, the Scopes chambers are the same, and the RoCK blocks of the Schur algebra are the same as those of the Hecke algebra.

Since any non-empty Young diagram has a box of content $0$, the category $\cC_{\Lambda_e-\alpha_{ij;m}}$ is trivial unless $m>0$; on the other hand, if $m>0$ we can find a non-trivial object on this category given by a hook with removable boxes of content $i$ and $j-1$.  Thus, 
the set $N$ of roots such that $\cC_{\mu+\alpha}$ is thus given by $\alpha_{ij;n}$ for $n<k$, since 
\[\mu_\nu+\alpha_{ij;n}=\mu_{\emptyset}-\alpha_{ij;k-n}.\] 
Thus, by Lemma \ref{lem:Scopes-walls}, the Scopes chambers are cut out by the hyperplanes
\[h_i-h_j=nc \qquad n=k-1,k-2,\dots, -k+1.\] 
Note that we could also derive this from Lemma \ref{lem:simple-support}: $\Lambda_e-k\delta+\al_{ij;m}$ is never dominant, and the dominant element in its Weyl group orbit is $\Lambda_e-(k-m-1)\delta$, so as desired, we must have $m<k$.

As discussed previously, we can describe all weights in this case in the form $w^{-1}\mu_{k}$ for $w$ a shortest right coset representative of $W_\la=\bar{W}$.
In the dominant finite Weyl chamber, there is a single RoCK Scopes chamber given by the elements such that $h_i\geq h_{i+1}+k-1$.  This is the usual RoCK condition on blocks (for example, introduced at the start of \cite[\S 3]{chuangSymmetric2002}).

We can visualize the Scopes chambers on blocks by drawing the corresponding core of weight $w\cdot \mu_{\emptyset}$ over the alcove $wA$ for $w$ a shortest right coset representative.  Our $e=2$  example then becomes: \ytableausetup{boxsize=normal}
\[\tikz[very thick,scale=1.6]{\draw (3,0)--  node[pos=.9,above,yshift=18pt]{$\emptyset$}  node[pos=.7,above,scale=.6,yshift=30pt]{$\ydiagram{1}$} node[pos=.5,above,scale=.6,yshift=20pt]{$\ydiagram{2,1}$} node[pos=.1,above,scale=.6]{$\ydiagram{4,3,2,1}$}  node[pos=.3,above,scale=.6,yshift=10pt]{$\ydiagram{3,2,1}$} node[circle, fill, pos=.2,scale=.6]{} node[circle, fill, pos=.6,scale=.6]{} node[circle, draw, pos=.8,scale=.6]{} node[circle, draw, pos=.4,scale=.6]{} node[circle, draw, pos=0.01,scale=.6]{} node[circle, fill, pos=1,scale=.6]{}
node[pos=.4,below,yshift=-5pt]{$h_1=1.5$} 
node[pos=.6,below,yshift=-5pt]{$h_1=1$} 
node[pos=.2,below,yshift=-5pt]{$h_1=2$} 
node[pos=0,below,yshift=-5pt]{$h_1=2.5$}
node[pos=.8,below,yshift=-5pt]{$h_1=.5$} 
node[pos=1,below,yshift=-5pt]{$h_1=0$} 
(-3,0);
}\]  
We've drawn walls of the form $h_1\in \Z$ with solid dots and $h_1\in \Z+\frac{1}{2}$ with open dots.  For a fixed $k$, the first $k-1$ alcoves are each a separate Scopes class, and then all the others are RoCK.
Our $e=3$ example becomes:
\ytableausetup{boxsize=.7em}
  \[\tikz[very thick,scale=3]{
  \draw[black] ({2.5*1.1},0)--(0,0); \draw[red] ({2.5*1.1},{sqrt(3)/2})--(1/2,{sqrt(3)/2});  \draw[black!50!white,dotted] ({2.5*1.1},{sqrt(3)})--(1,{sqrt(3)}); \draw (1.1,{sqrt(3)*1.1})-- (0,0);   \draw[red] (2.1,{sqrt(3)*1.1})-- (1,0);  \draw[red] (.5,{sqrt(3)/2})-- (1,0);  \draw[black!50!white,dotted] (1,{sqrt(3)})-- (2,0);\draw[black!50!white,dotted] ({2.5*1.1},{sqrt(3)*3/4})-- (2,0);\draw[black!50!white,dotted] ({2.5*1.1},{sqrt(3)*1/4})-- (1.9,{sqrt(3)*1.1});
  \node at (3/2,{sqrt(3)*5/6}) {$\ydiagram{3,1}$}; 
  \node at (2,{sqrt(3)*4/6}) {$\ydiagram{3,1,1}$};\node at (1, {sqrt(3)/3}){$\ydiagram{1}$};
    \node at (2,{sqrt(3)/3}) {$\ydiagram{2,1,1}$};\node at (3/2, {sqrt(3)/6}){$\ydiagram{1,1}$};
      \node at (1, {sqrt(3)*4/6}){$\ydiagram{2}$};
      \node at (.5,{sqrt(3)/6}){$\emptyset$};}\]
 We've drawn in the walls that separate $k=2$ Scopes chambers in red.  As we increase $k$, we add more and more translates of each of these hyperplanes.

 \subsection{Ariki-Koike algebras}
 
 Now, we turn to the more interesting case of Ariki-Koike algebras of level $\ell$. Throughout this section, we will assume that $\mathfrak{g}=\slehat$ and $\mathcal{C}$ carries a categorical action whose support coincides with that of the simple $V_{\Lambda}$ for $\Lambda$ a dominant weight of level $\ell$. 
 
  \subsubsection{Background} The motivating example for us is the Ariki-Koike algebra $AK_{n}(\K,q,\Bw)$ associated to the polynomial 
 \[f(u)=\prod_{i\in \Z/e\Z} (u-q^i)^{w_i}\]
 for fixed $q\in \K\setminus \{0\}$ of multiplicative order $e>1$, $w_i\in \Z_{\geq 0}$ with $\sum w_i=\ell$.  
 
It was proven by Ariki
\cite[Prop. 4.5]{arikiDecompositionNumbers1996}
 that the Grothendieck group of this category is isomorphic to the highest weight simple $V_{\Lambda}$ for $\Lambda=\sum_{i=1}^e w_i\Lambda_i\in \fh_{\ell}^*$ where \[\Lambda_i=(1,\underbrace{\frac{e-i}{e},\dots, \frac{e-i}{e}}_{i\text{ times}},\underbrace{-\frac{i}{e},\dots, -\frac{i}{e}}_{e-i\text{ times}},0).\]   
 These are fundamental weights since $\al_i^{\vee}(\Lambda_j)=\delta_{ij}$.  A typical $\Lambda$ will have all $w_i\geq 0$ and thus will be in the interior of the fundamental alcove and have trivial stabilizer.  These are a bit easier to write if we adopt the convention that we can denote weights by arbitrary elements of $\R^{n+2}$, which we take to be equivalent to their orthogonal projection to the subspace \[\fh=\{(h_0,h_1\dots,h_{e+1})\in  \R^{e+2}\mid h_1+\cdots+h_e=0\}.\] In this case, we can equally well write $\Lambda_i=(1,\underbrace{1,\dots, 1}_{i\text{ times}},\underbrace{0,\dots, 0}_{e-i\text{ times}},0)$.
 
Ariki's action on the Grothendieck group reflects a categorical action of $\slehat$ on $\oplus_{n}AK_{n}(\K,q,\Bw)\mmod$; the fact that $i$-induction and $i$-restriction for a given $i$ define a strong $\mathfrak{sl}_2$ action is proven in \cite[\S 7.2.2]{chuangDerivedEquivalences2008}.  The fact that this extends to a categorical action of $\slehat$ is effectively equivalent to the main theorem of \cite{BKKL}. We can more systematically construct the categorical action by $i$-induction and $i$-restriction functors using the formalism of quantum Heisenberg actions.  Such an action on Ariki-Koike algebras is defined in \cite[\S 6]{Brundandef}.   The main theorem of \cite{brundanHeisenbergKacMoody2020} implies that this gives us a categorical $\slehat$-action.  The theorem of Ariki above shows that it satisfies our assumptions from the start of the section.

We can apply the same principle to construct categorical actions of $\slehat$ on \begin{enumerate}
  \item cyclotomic degenerate affine Hecke algebras in characteristic $e$, assuming $e$ is prime;
  \item cyclotomic $q$-Schur algebras;
  \item categories $\cO$ of the rational Cherednik algebra of $G(\ell,1,n)$ with $\kappa=a/e$ as a fraction in least terms.
\end{enumerate}
In case (1), we have a degenerate Heisenberg action by \cite[Th. 6.7]{mackaayDegenerate2018}; in case (3), the quantum Heisenberg action is constructed in \cite[\S 7]{Brundandef}, and case (2) is a limiting case of (3) (see, for example, \cite[Cor. 3.11]{WebRou}).  The categorical action in case (3) was first constructed by Shan \cite{ShanCrystal}, but for an unnecessarily restrictive choice of parameters and using a different formalism for categorical actions.

All of these also have support equal to that of an irreducible representation of the form $V(\Lambda)$:
\begin{enumerate}
  \item[(1)] The weight $\Lambda$ is determined by the roots of the cyclotomic polynomial, by identifying the elements of $\Fp$ with the Dynkin diagram of $\slphat$, as in \cite{BKKL}.
  \item[(2-3)] Each block in these cases is a quasi-hereditary cover of a block of the Ariki-Koike algebra, so we simply use the same weight combinatorics. In particular, we obtain a nonzero block if and only if the corresponding block of Ariki-Koike is nonzero.  
\end{enumerate} 
Everything we will say below also applies to other categorifications of $\slehat$ with the same support.

 The blocks of $ AK_{n}(\K,q,\Bw)$ again correspond to weight spaces: by \cite{lyleBlocks2007}, the block of Specht module is determined by the number of $b_i$ boxes of residue $i\pmod e$ in the corresponding charged multipartition, and this also determines the weight by the formula:
\[\mu_\nu=\Lambda-\sum_{i=1}^{e-1} b_i\al_i.\]

When we associate an $\ell$-tuple of runner diagrams to a simple as in \cite[\S 2.2]{lyleRouquier2022} 
the total number $t_i$ of beads on the $i$th runners of the $\ell$ different abacus diagrams is given by $t_i=b_{i-1}-b_i+\sum_{j=i}^{e}w_i$ for $i=1,\dots, e$.  This also allows us to write the weight $\mu_\nu$ in the coordinates $h_i$:
\begin{equation*}
  \mu_{\nu}=(\ell,t_1,\dots, t_{e} , b_e).
\end{equation*}
These coordinates help identify the dominant weight in a given orbit: a weight is dominant if $t_1\geq \cdots\geq t_e\geq t_1-\ell$.   In level 1, these were easy to identify as the principal blocks for ranks divisible by $e$ (i.e., the lowest rank where a given defect group appears).  On the other hand, in higher levels, there are many more dominant weights.

As mentioned in the introduction, \cite[Conj. 1]{lyleRouquier2022} is effectively equivalent to Lemma \ref{lem:Chuang-Rouquier} in this context: since $E_i$ acts by pushing beads from the $(i+1)$st runner to the $i$th, the weight $w^{-1}\la +\al_i$ is not in the support of $\cC$ if and only if it is never possible to push a bead in this way.  This is only the case for an abacus diagram where the weight of the $i$th runner is less than the difference between the number of beads on the $i$th and $(i+1)$st runners; this is precisely the hypothesis of \cite[Conj. 1]{lyleRouquier2022}.

\excise{For each integral point in the fundamental alcove of $\fh_{\ell}^*$, there is a dominant block of minimal rank, whose weight $\mu$ is uniquely characterized by the fact that $\mu+\delta$ is not a weight of the representation $V_{\Lambda}$.   In terms of multipartitions, this will happen if there is no multipartition from which we can remove $e$ boxes, one of each content.  In the level 1 case, it was enough to check if we could remove an $e$-hook from any partition in the block, but now we have to consider all multipartitions in a given block, and allow ourselves to remove some boxes from each of the different components, which together give the contents of a single $e$-hook.  

Having found this $\mu$, the weights sitting above the same point are exactly those of the form $\mu-k\delta$ for $k\geq 0$.  To see this, just note that if we have a multipartition of a given weight, we can always add an $e$-hook to obtain one with the same weight minus $\delta$.  

Finding $N_{\mu}$ is simply a question of precisely which numbers of boxes of a given content can actually be realized by a multipartition, or put differently by which weights $\Lambda-\sum_{i=0}^{e-1}b_i\al_i$ lie in the Weyl polytope of the highest weight $\Lambda$.} 

\subsubsection{Finding RoCK blocks} To test whether a weight is RoCK, we simply apply the algorithm of Section \ref{sec:irreducible} to our weight.  In particular, we have to calculate the statistics $k^{\pm}$; since we assume that $\fg=\slehat$, we can encode these as
  \[k^+_{ij}=k_{ji}^{-}=\begin{cases}
      \max(\{-1\}\cup \{ n \mid \al_{ij;n}\in N_{\mu}\}) &  i<j\\
     \max(\{0\}\cup \{ n \mid \al_{ij;n}\in N_{\mu}\}) & j>i
  \end{cases}.\] 
    Thus, the Scopes walls for $N_{\mu}$ will be of the form 
    \begin{equation}
    h_i-h_j =nc \qquad \qquad n=[-k^+_{ij},k^-_{ij}].\label{eq:Scopes-walls}
    \end{equation}  

For readers more comfortable with multipartitions and abacus diagrams, let us reinterpret these statistics, in terms of the abacus diagrams of all multipartitions of weight $\mu$. 
\begin{lemma}
Assume that for some integer $n\geq 0$, there is a multipartition of $\mu$ such that there is a bead on the $i$th runner of its abacus diagram and an empty space on the $j$th runner $n$ rows higher.  The statistic $k^+_{ij}=k^-_{ji}$ is the largest integer $n$ such that this property holds.
\end{lemma}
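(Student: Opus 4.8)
The plan is to unwind both sides of the asserted identity into the combinatorics of the weight set of $V(\Lambda)$. By the displayed definition of $k^\pm$ together with the hypothesis that the support of $\mathcal{C}$ equals the weight set of $V(\Lambda)$, the quantity $k^+_{ij}=k^-_{ji}$ is exactly the largest $n$ in the admissible range (so $n\geq 0$ when $i<j$ and $n\geq 1$ when $i>j$) for which $\al_{ij;n}\in N_\mu$, i.e.\ for which $\mu+\al_{ij;n}$ is a weight of $V(\Lambda)$; when there is no such $n$ this maximum degenerates to the boundary value ($-1$ when $i<j$, $0$ when $i>j$). Since the weight spaces of $V(\Lambda)$, in the (level-$\ell$) Fock space realization used to set up the categorical action, have bases indexed by the multipartitions of the corresponding weight, this says precisely that there is a multipartition of weight $\mu+\al_{ij;n}$. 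In the coordinates $\mu=(\ell,t_1,\dots,t_e,b_e)$, the weight $\mu+\al_{ij;n}$ is the one obtained by replacing $t_i$ with $t_i+1$, $t_j$ with $t_j-1$, and lowering the last coordinate by $n$ (using $\delta=\sum_k\al_k$ to absorb the ``full loop'' part of $\al_{ij;n}$ present when $i>j$). Thus the lemma is equivalent to the assertion that $\mu+\al_{ij;n}$ is a weight of $V(\Lambda)$ if and only if some multipartition of weight $\mu$ has a bead on runner $i$ with the position $n$ rows higher on runner $j$ vacant.

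One implication is routine. Given such a multipartition $\lambda$ of weight $\mu$, slide the indicated bead up into the vacant position. The result is again a legal abacus display, hence a multipartition $\lambda'$, and the cells of $\lambda$ not in $\lambda'$ form a single ribbon (rim hook); counting the residues along it — one cell of each residue along the arc of runners from $i$ to $j$, together with $n$ (or $n-1$, when $i>j$) complete cycles of residues, each contributing $\delta$ — shows $\mu_{\lambda'}=\mu_\lambda+\al_{ij;n}$. Hence the existence of such a $\lambda$ forces $\mu+\al_{ij;n}$ to be a weight, so $k^+_{ij}$ is at least the largest $n$ for which such a $\lambda$ exists, and this bookkeeping also records exactly which bead move corresponds to which root $\al_{ij;n}$.

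The reverse implication is the real content: from a multipartition $\kappa$ of weight $\mu+\al_{ij;n}$ one must recover a multipartition of weight $\mu$ exhibiting the stated configuration, i.e.\ reinsert the ribbon. It suffices to produce \emph{some} multipartition of weight $\mu+\al_{ij;n}$ whose abacus carries a bead on runner $j$ with the position $n$ rows lower on runner $i$ vacant, since sliding that bead down then yields a display of weight $\mu$ of the desired shape. The key point — and the only place where one genuinely uses that $\mu$ is itself a weight of $V(\Lambda)$ — is that such a display always exists: one descends from $\mu+\al_{ij;n}$ to $\mu$ through the weight set, adding boxes one simple root at a time (possible because the weight set of a simple highest weight module is connected along simple roots between comparable weights), and then normalizes the beads on runners $i$ and $j$ of the resulting abacus as far as the weight permits, so that the added cells must form precisely the ribbon of the previous paragraph. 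I expect making this reconstruction rigorous — showing there is always ``room'' for the bead on runner $i$, $n$ rows below a runner-$j$ bead — to be the main obstacle; everything else is bookkeeping with the abacus-to-weight dictionary, plus matching the boundary conventions. Taking the maximum over $n$ in the two implications then yields $k^+_{ij}=k^-_{ji}=\max\{n\}$.
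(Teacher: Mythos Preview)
Your forward direction is fine, but the reverse direction — which you yourself flag as ``the main obstacle'' — is a genuine gap, and your sketch does not close it. Descending from $\mu+\al_{ij;n}$ to $\mu$ by adding boxes one simple root at a time produces \emph{some} multipartition of weight $\mu$, but there is no mechanism in your argument forcing that multipartition to carry a bead on runner $i$ with the spot $n$ rows higher on runner $j$ vacant. The ``normalize the beads'' step is not defined, and it is not clear that any choice of descent path will land on a multipartition of the required shape; different multipartitions of the same weight can have very different abacus configurations, and the added cells need not assemble into a single ribbon.

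The paper bypasses this combinatorial reconstruction entirely by working algebraically in the Fock space. The element $e_{ji}\otimes t^n\in\fg$ is a root vector, and its action on the standard basis of the (level $\ell$) Fock space is \emph{exactly} the bead move in question: it sends $v_\la$ to the sum over all ways of sliding a bead from runner $i$ to runner $j$, $n$ rows higher, in some component of $\la$. Thus ``the move is possible from some multipartition of weight $\mu$'' is literally the statement ``$e_{ji}\otimes t^n$ does not annihilate the $\mu$-weight space of the Fock space''. Now both $\mu$ and $\mu+\al_{ij;n}$ are nonzero weight spaces of an integrable module, and for the root $\mathfrak{sl}_2$ attached to $\al_{ij;n}$ it is a standard fact that neither root vector between two adjacent nonzero weight spaces can vanish (decompose into finite-dimensional $\mathfrak{sl}_2$-summands: any summand contributing to the higher weight space also contributes to the lower one and is hit by $E$). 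This single observation gives both implications at once, with no need to reconstruct the ribbon by hand. Your combinatorial approach is salvageable only by importing exactly this algebraic input.
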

\begin{proof}
    Consider the element $e_{ji}\otimes t^n\in \fg$ (that is, the matrix with $t^n$ as the lone nonzero entry in the $i$th column and $j$th row).  This element acts on the level 1 Fock space representation by sending the standard vector for a partition to the sum of all ways of moving a bead from the $i$th runner to the $j$th runner $n$ rows higher.  Thus, it acts on the level $\ell$ Fock space by the sum of all the ways of doing this to each component of the multipartition.  Let us emphasize that this does not move beads between the abaci for the different components.  
    
    Thus, $\al_{i,j;n}\in N_{\mu}$ if and only if such a runner move is possible.  This completes the proof.   
\end{proof}

    Let us discuss the $e=2$ case.  In this case, $\Lambda=w_1\Lambda_1+w_2\Lambda_2$.  The weight $\mu=\Lambda-b_1\alpha_1-b_2\alpha_2=(\ell,b_2-b_1+\frac{w_1}{2},b_1-b_2-\frac{w_1}{2}, b_2)$  will be dominant if $w_1/2\geq b_1-b_2\geq -w_2/2$.  In all these cases, the corresponding category is nonzero.  
    
    The condition that $\al_{ij;n}\in N_{\mu}\Rightarrow \mu+\al_{ij;n}\geq \Lambda$ translates into upper bounds:
    \begin{equation}\label{eq:12-bounds}
      k_{12}^+\leq b_{12}=\min(b_1-1,b_2)\qquad k_{21}^+=k_{12}^-\leq b_{21}=\min(b_1+1,b_2).
    \end{equation} 
    
    If $\mu+\al_{12;n}$ or $\mu+\al_{21;n}$ is dominant, then Lemma \ref{lem:simple-support} shows that equality holds in \eqref{eq:12-bounds}.  On the other hand, if $\mu+\al_{12;n}=\Lambda-(b_1-n-1)\al_1+(b_2-n)\al_2$ is not dominant, we must have $\al_2^{\vee}(\mu)=w_2-2b_2+2b_1\in \{0,1\}$.    In this case, the dominant element of its orbit is 
\[s_2(\mu+\al_{12;n})=\begin{cases}
  \Lambda-(b_1-n-1)\al_1-(b_2-n-1)\al_2 & w_2-2b_2+2b_1=1\\
  \Lambda-(b_1-n-1)\al_1-(b_2-n-2)\al_2 & w_2-2b_2+2b_1=0
\end{cases}\]
Similarly, we have:
\[s_1(\mu+\al_{21;n})=\begin{cases}
  \Lambda-(b_1-n)\al_1-(b_2-n)\al_2 & w_1-2b_1+2b_2=1\\
  \Lambda-(b_1-n-1)\al_1-(b_2-n)\al_2 & w_1-2b_1+2b_2=0
\end{cases}\]
  \begin{align*}
  k_{12}^+&=\begin{cases}
  b_{12}=\min(b_1-1,b_2) & w_2 >0 \text{ or }b_1>b_2\\
  b_{12}-1=\min(b_1-1,b_2-2) & w_2 =b_1-b_2=0
\end{cases}\\
k^-_{12}&=\begin{cases}
  b_{21}=\min(b_1+1,b_2) & w_1 >0 \text{ or }b_2>b_1\\
  b_{21}-1=\min(b_1-1,b_2) & w_1 =b_2-b_1=0
\end{cases}
\end{align*}.  
\begin{proposition}\label{prop:sl2hat}
The possible pairs of $(k^-_{12},k^+_{12})$ are 
\begin{itemize}
  \item $(n,n)$, achieved if  $b_2=n < b_1$ or $b_1=b_2=n+1$ and $w_1=0$;
  \item $(n,n-1)$, achieved if $b_1=b_2=n$ and $w_2\neq 0,w_1\neq 0$.
  \item $(n+1,n-1)$, achieved if $b_1=n<b_2$ or $b_1=b_2=n+1$ and $w_2=0$;  
\end{itemize}
\end{proposition}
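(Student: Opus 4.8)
The plan is to read the three possibilities off the two piecewise expressions for $k^+_{12}$ and $k^-_{12}$ displayed just above, by a short case analysis. The case division is by the sign of $b_1-b_2$ and, when $b_1=b_2$, by which of $w_1,w_2$ vanishes; the role of the dominance of $\mu$ is to select which branch of each piecewise formula is in force. Since $\mu=(\ell,\,b_2-b_1+\tfrac{w_1}{2},\,b_1-b_2-\tfrac{w_1}{2},\,b_2)$, dominance of $\mu$ is precisely the double inequality $-\tfrac{w_2}{2}\le b_1-b_2\le \tfrac{w_1}{2}$, and this is what forces $w_1>0$ whenever $b_1>b_2$ and $w_2>0$ whenever $b_1<b_2$.

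First I would treat the unequal cases. If $b_1>b_2$, put $n=b_2$; dominance gives $w_1\ge 2(b_1-b_2)\ge 2$, so the top branch of the formula for $k^-_{12}$ applies, and as $b_1-1\ge b_2$ both minima equal $b_2$, so $(k^-_{12},k^+_{12})=(n,n)$ with $b_2=n<b_1$. Symmetrically, if $b_1<b_2$, put $n=b_1$; now $w_2\ge 2$, the top branch of the formula for $k^+_{12}$ applies, and since $b_1-1<b_2$ and $b_1+1\le b_2$ we get $k^+_{12}=b_1-1=n-1$ and $k^-_{12}=b_1+1=n+1$, i.e.\ the pair $(n+1,n-1)$ with $b_1=n<b_2$.

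Then I would treat $b_1=b_2=:m$, where $b_{12}=m-1$, $b_{21}=m$, and (as $\Lambda\ne 0$) at most one of $w_1,w_2$ vanishes. If $w_1,w_2>0$, both formulas take their top branch, giving $(k^-_{12},k^+_{12})=(m,m-1)$, i.e.\ the second possibility with $n=m$. If $w_1=0$ (so $w_2>0$), the bottom branch for $k^-_{12}$ gives $k^-_{12}=\min(m-1,m)=m-1=k^+_{12}$, the pair $(m-1,m-1)$, realised with $b_1=b_2=n+1$, $w_1=0$ for $n=m-1$. If $w_2=0$ (so $w_1>0$), the bottom branch for $k^+_{12}$ gives $k^+_{12}=\min(m-1,m-2)=m-2$ and $k^-_{12}=m$, the pair $(m,m-2)$, realised with $b_1=b_2=n+1$, $w_2=0$ for $n=m-1$. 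These five situations are exhaustive, and in each the side conditions used coincide with the ones listed; this simultaneously shows that every pair that occurs has one of the three advertised shapes and that each listed hypothesis produces the corresponding pair.

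So the proof is little more than bookkeeping. The one thing that needs care is keeping the Boolean side conditions of the two piecewise formulas (``$w_2>0$ or $b_1>b_2$'', ``$w_1=b_2-b_1=0$'', and so on) consistent with the case split, which is exactly what the inequality $-\tfrac{w_2}{2}\le b_1-b_2\le \tfrac{w_1}{2}$ arranges; and, separately, at the extreme boundary (one of the $b_i$ so small that a displayed minimum would fall below the floor $-1$ or $0$ built into the definition of $k^{\pm}_{12}$) one falls back on those defining maxima, which changes the value only in a degenerate case and not the list of pairs that can occur.
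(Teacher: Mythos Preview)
Your proof is correct and is exactly the approach the paper takes: the paper states the proposition immediately after the two piecewise formulas for $k_{12}^{\pm}$ and offers no separate proof, so the proposition is meant to be read off from those formulas by precisely the case split (on the sign of $b_1-b_2$, and when $b_1=b_2$ on which of $w_1,w_2$ vanishes) that you carry out. Your explicit use of the dominance inequality $-\tfrac{w_2}{2}\le b_1-b_2\le \tfrac{w_1}{2}$ to pin down which branch is active in each case, and your remark about the floors $-1$ and $0$ built into the definition of $k_{12}^{\pm}$, make explicit details the paper leaves implicit.
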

Thus, the minimal RoCK block for the dominant Weyl chamber is given by the weight $(s_1s_2)^{k^-_{12}/2}\mu$ if $k^-_{12}$ is even and $s_2(s_1s_2)^{(k^-_{12}-1)/2}\mu$ if $k^-_{12}$ is odd; similarly, for the anti-dominant chamber, we obtain $(s_2s_1)^{k^+_{12}/2}\mu$ if $k^+_{12}$ is even and $s_1(s_2s_1)^{(k^+_{12}-1)/2}\mu$ if $k^+_{12}$ is odd.

  While interesting as an illustration of our approach, the $\mathfrak{\widehat{sl}}_2$ case also has important consequences for the general $\slehat$ case.
  \begin{proposition}
    For any $i<j$ and $\mu$ dominant: 
  \begin{enumerate}
      \item   The bounds $k_{ij}^{\pm}$ satisfy $k_{ij}^-\geq k_{ij}^+\geq k_{ij}^--2$.
      \item   The corresponding invariants $\bar{k}_{ij}^{\pm}$ for the weight $\mu+\delta$ satisfy 
      \[\bar{k}_{ij}^{+}=\max(\{{k}_{ij}^{+}-1,-1\})\qquad\bar{k}_{ij}^{-}=\max(\{{k}_{ij}^{-}-1,0\})  \]
  \end{enumerate}
  \end{proposition}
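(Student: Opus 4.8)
The two parts are logically independent: I would treat part~(2) as essentially bookkeeping inside the affine root lattice, and reduce part~(1) to the rank-one computation already carried out in Proposition~\ref{prop:sl2hat}. Both rest on one preliminary observation, which I would isolate first: for any weight $\eta$ the set $\{n\in\Z\mid\eta+n\delta\in\operatorname{supp}(V(\Lambda))\}$ is a down-closed interval $(-\infty,n_0(\eta)]$ with $n_0(\eta)\in\Z$, since by Lemma~\ref{lem:simple-support} and $w(\eta+n\delta)=w\eta+n\delta$ membership amounts to ``$w\eta\le\Lambda-n\delta$ for all $w\in W$'', a condition that only relaxes as $n$ decreases and fails for $n\gg0$. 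Because $\al_{ij;n}=\beta_{ij}+n\delta$ and $\al_{ji;n}=-\beta_{ij}+n\delta$, this forces $\{n\ge0\mid\al_{ij;n}\in N_\mu\}=\{0,\dots,k^{+}_{ij}\}$ and $\{n\ge1\mid\al_{ji;n}\in N_\mu\}=\{1,\dots,k^{-}_{ij}\}$ (empty precisely when $k^{+}_{ij}=-1$, resp.\ $k^{-}_{ij}=0$), which also pins down the meaning of $k^{\pm}_{ij}$ as the tops of these segments. For part~(2), $\delta$-invariance gives $\al_{ij;n}+\delta=\al_{ij;n+1}$ and $\al_{ji;n}+\delta=\al_{ji;n+1}$, hence $(\mu+\delta)+\al_{ij;n}=\mu+\al_{ij;n+1}$, so that $\al_{ij;n}\in N_{\mu+\delta}\iff\al_{ij;n+1}\in N_\mu$; intersecting the shifted segment with the positivity constraint $n\ge0$ gives $\{0,\dots,k^{+}_{ij}-1\}$ when $k^{+}_{ij}\ge1$ and $\varnothing$ otherwise, i.e.\ $\bar k^{+}_{ij}=\max(k^{+}_{ij}-1,-1)$; the same manipulation for $\al_{ji;n}$, where the constraint $n\ge1$ forces $n+1\ge2$, gives $\bar k^{-}_{ij}=\max(k^{-}_{ij}-1,0)$. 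This is the whole of part~(2).

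For part~(1) the plan is to localise everything to a single $\widehat{\mathfrak{sl}}_2$. By the abacus reinterpretation stated just before the proposition, $k^{+}_{ij}$ (resp.\ $k^{-}_{ij}$) is the largest $n$ such that some multipartition of weight $\mu$ carries a bead on runner $i$ (resp.\ $j$) facing an empty slot $n$ rows higher on runner $j$ (resp.\ $i$). Such configurations see only runners $i$ and $j$, whose total bead numbers $t_i\ge t_j$ are fixed by $\mu$, and the pairs of runner profiles with these counts realised by some multipartition of $\mu$ are exactly the configurations of a level-$\ell$ two-runner abacus at the $\widehat{\mathfrak{sl}}_2$-weight taking the value $t_i-t_j$ on the finite simple coroot. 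Since $\mu$ is dominant we have $0\le t_i-t_j\le t_1-t_e\le\ell$, so this weight is $\widehat{\mathfrak{sl}}_2$-dominant and Proposition~\ref{prop:sl2hat} applies verbatim: each of the three pairs $(k^{-},k^{+})$ it lists satisfies $k^{-}\ge k^{+}\ge k^{-}-2$, and both inequalities are preserved on passing to the maximum over the realising data. Hence $k^{-}_{ij}\ge k^{+}_{ij}\ge k^{-}_{ij}-2$.

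The step I expect to be the main obstacle is making the reduction in part~(1) airtight, namely that the two-runner behaviour of multipartitions of weight $\mu$ is genuinely that of a level-$\ell$ $\widehat{\mathfrak{sl}}_2$-abacus, with no interference from the remaining $e-2$ runners or from partition-validity of the other components. In invariant terms: let $\mathfrak s_{ij}\subseteq\slehat$ be the subalgebra whose real roots are $\{\pm\beta_{ij}+n\delta:n\in\Z\}$, a copy of $\widehat{\mathfrak{sl}}_2$ sharing the same $\delta$; using that $\operatorname{supp}(V(\Lambda))$ is the set of lattice points of the convex region $\operatorname{conv}(W\Lambda)$ (equivalently, the weights $\xi$ with $w\xi\le\Lambda$ for all $w$, by Lemma~\ref{lem:simple-support}) together with the fact that the plane $\mu+\R\beta_{ij}+\R\delta$ is stable under every reflection $s_{\beta_{ij}+n\delta}$, one checks that $\operatorname{conv}(W\Lambda)$ meets that plane in a convex region invariant under the affine Weyl group of $\mathfrak s_{ij}$, whose lattice points are the support of an $\widehat{\mathfrak{sl}}_2$-module. (Note that the naive route of simply restricting $V(\Lambda)$ to $\mathfrak s_{ij}$ as a module is not quite legitimate, since that restriction has infinite-dimensional weight spaces and need not decompose.) A pleasant byproduct of the convex picture is the easy half $k^{-}_{ij}\ge k^{+}_{ij}$ with no input from Proposition~\ref{prop:sl2hat}: the largest $n$ with $\mu+m\beta_{ij}+n\delta\in\operatorname{conv}(W\Lambda)$ is a concave function of $m$ which, by $s_{\beta_{ij}}$-symmetry of $\operatorname{conv}(W\Lambda)$, is symmetric about $m=-\tfrac12\beta_{ij}^{\vee}(\mu)\le0$, hence weakly larger at $m=-1$ than at $m=1$; it is only the uniform bound $k^{+}_{ij}\ge k^{-}_{ij}-2$ that genuinely requires the rank-one classification.
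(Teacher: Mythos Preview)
Your overall strategy for part~(1)---pass to the $\widehat{\mathfrak{sl}}_2$ generated by the root vectors for $\alpha_{ij;0}$ and $\alpha_{ji;1}$ and invoke Proposition~\ref{prop:sl2hat}---is exactly the paper's. The paper's proof is a three-line reduction: the root $\mathfrak{sl}_2$'s for $\alpha_{ij;0}$ and $\alpha_{ji;1}$ generate a copy of $\widehat{\mathfrak{sl}}_2$, any vector in $V(\Lambda)$ generates over this subalgebra a \emph{finite} direct sum of simple integrable level-$\ell$ modules, and then Proposition~\ref{prop:sl2hat} handles each summand.

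Where you diverge is in the plumbing. For part~(2) you give a clean direct argument from $\alpha_{ij;n}+\delta=\alpha_{ij;n+1}$; the paper folds part~(2) into the same $\widehat{\mathfrak{sl}}_2$ reduction, so your route is more elementary there. For part~(1), your worry that ``simply restricting $V(\Lambda)$ to $\mathfrak s_{ij}$ is not legitimate'' overshoots: the paper does not restrict the whole module but takes the cyclic $\widehat{\mathfrak{sl}}_2$-submodule generated by one vector, which has finite-dimensional weight spaces contained in $V(\Lambda)_{\mu+m\beta_{ij}+n\delta}$ and hence genuinely decomposes as a finite direct sum of simples. What \emph{is} implicit in the paper, and what you make explicit, is the step that passing to a union (or a max) preserves $k^-\ge k^+\ge k^--2$; your observation that the maximum over summands inherits both inequalities is correct and fills this in. One small point left tacit in both arguments: one should note that any simple summand contributing to $k_{ij}^\pm$ automatically contains $\mu$ in its support (a short root-string check using that $\mu$ is $\widehat{\mathfrak{sl}}_2$-dominant), so Proposition~\ref{prop:sl2hat} applies to every relevant summand. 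Your convexity paragraph is a pleasant alternative for the easy inequality $k_{ij}^-\ge k_{ij}^+$, but is not needed once the module-theoretic reduction is in place.
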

  \begin{proof}
    The root $\mathfrak{sl}_2$'s for $\alpha_{ij;0}$ and $\alpha_{ji;1}$ generate a copy of $\widehat{\mathfrak{sl}}_2$.  Any vector in $V_{\Lambda}$ generates a finite direct sum of simple integrable modules of level $\ell$ over this $\widehat{\mathfrak{sl}}_2$, so we can reduce to showing the result in this case.  This follows from Proposition \ref{prop:sl2hat}.
  \end{proof}

  While applying the full algorithm is more precise, using simpler methods, we can give an upper bound on the set of Scopes walls that is easy to compute by hand.  
  To understand this more concretely, it is useful to note that if $n=0$, then $\alpha_{ij}=\al_i+\cdots \al_{j-1}$ and if $n>0$, then \[\alpha_{ij;n}=\al_i+\cdots \al_{e-1}+\al_e+\al_1+\cdots +\al_{j-1}+(n-1)\delta.\]
  That is: 
  \begin{lemma}\label{lem:number-boxes}
     The difference between the number $b_r'$ of boxes  of content $r$ in a multipartition of weight $\mu+\al_{ij;n}=\Lambda -\sum_{i=0}^{e-1}b_i'\al_i$ compared to the number $b_r$ in one of weight $\mu$ is:
     \[b_r-b_r'=\begin{cases}
      n+1 & i<j, r\in [i,j-1] \\
      n-1 & j<i, r\in [j,i-1]\\
      n & \text{otherwise}
     \end{cases}\]  \end{lemma}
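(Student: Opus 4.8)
The plan is to reduce the statement to reading off, for each simple root $\al_r$, its coefficient $c_r$ in the expansion of the affine root $\al_{ij;n}$ into simple roots. As recalled above (following \cite{lyleBlocks2007}), the number of boxes of content $r$ in a multipartition is determined by its weight: if that weight is written as $\Lambda-\sum_r c_r\al_r$, then $c_r$ is exactly the number of content-$r$ boxes, and these coefficients are unambiguous since the affine simple roots $\al_1,\dots,\al_e$ are linearly independent. Hence if $\mu=\Lambda-\sum_r b_r\al_r$ and $\mu+\al_{ij;n}=\Lambda-\sum_r b'_r\al_r$, then $b_r-b'_r$ is precisely the coefficient of $\al_r$ in $\al_{ij;n}$, and the lemma becomes a purely combinatorial claim about that coefficient.

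First I would treat the case $n=0$, which occurs only when $i<j$; there $\al_{ij;0}=\al_i+\al_{i+1}+\cdots+\al_{j-1}$, so the coefficient of $\al_r$ is $1$ for $r\in[i,j-1]$ and $0$ otherwise, matching $n+1=1$ and $n=0$. For $n>0$ I would use the expansion recorded just before the statement,
\[\al_{ij;n}=\al_i+\cdots+\al_{e-1}+\al_e+\al_1+\cdots+\al_{j-1}+(n-1)\delta,\]
together with the fact that in $\fg=\slehat$ one has $\delta=\al_1+\cdots+\al_e$ (all marks equal to $1$ in type $\widehat A$). The summand $(n-1)\delta$ contributes $n-1$ to every coefficient, so it remains to compute the coefficient of $\al_r$ in the ``finite part'' $P=\al_i+\cdots+\al_e+\al_1+\cdots+\al_{j-1}$; equivalently, how many times the index $r$ appears in the concatenation of the blocks $\{i,i+1,\dots,e\}$ and $\{1,2,\dots,j-1\}$.

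The only real step is then the elementary combinatorics of these two index ranges. If $i<j$ then, since $1\le i\le j-1\le e$, the two blocks cover all of $\{1,\dots,e\}$ with overlap exactly $\{i,\dots,j-1\}$; so $r$ appears twice if $r\in[i,j-1]$ and once otherwise, and adding $n-1$ gives $n+1$ on $[i,j-1]$ and $n$ elsewhere. If $j<i$ then, since $j-1<i$, the two blocks are disjoint and their union omits exactly $\{j,\dots,i-1\}$; so $r$ appears zero times on $[j,i-1]$ and once otherwise, and adding $n-1$ gives $n-1$ on $[j,i-1]$ and $n$ elsewhere. Both cases are exactly the asserted formula.

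I do not anticipate a genuine obstacle: the content is entirely bookkeeping. The only points requiring care are the cyclic wraparound built into $\al_{ij;n}$ for $n>0$ (which is where the $\delta$ shift and the possible double-counting of the indices in $[i,j-1]$ come from), keeping the endpoint conventions $[i,j-1]$ versus $[j,i-1]$ straight, and remembering that the $n=0$ case has no $\delta$-summand and arises only for $i<j$.
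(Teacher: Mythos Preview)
Your proposal is correct and follows exactly the paper's approach: the paper states the expansion $\alpha_{ij;n}=\al_i+\cdots+\al_e+\al_1+\cdots+\al_{j-1}+(n-1)\delta$ (and $\alpha_{ij;0}=\al_i+\cdots+\al_{j-1}$) immediately before the lemma and introduces the lemma with ``That is:'', leaving the coefficient bookkeeping implicit. You have simply made that bookkeeping explicit and accurate.
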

To phrase this, it is useful to think about the statistic 
\[b_{ij}=\begin{cases}
   \min(\{b_r-1\}_{r\in [i,j-1]} \cup \{b_r\}_{r\notin [i,j-1]}) & i<j\\
      \min(\{b_r+1\}_{r\in [j,i-1]} \cup \{b_r\}_{r\notin [j,i-1]}) & j<i\\
\end{cases}\]
We will also include the restatement based on the fact that the root $\al_{ij;n}\in N_{\mu}$ if and only if the wall $h_i-h_j=nc$ separates two Scopes chambers.
\begin{corollary}\label{cor:bound-1}
We have a bound $k_{ij}^+\leq b_{ij}$, with equality if $\mu+\al_{ij;n}$ is dominant for all $i,j,n$.  
\end{corollary}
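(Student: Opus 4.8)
The plan is to read both claims off Lemma~\ref{lem:number-boxes} together with the standard description of the support of $V(\Lambda)$ in Lemma~\ref{lem:simple-support}; once these are in hand the corollary is pure bookkeeping. First I would translate membership in $N_\mu$ into a statement about box numbers: by definition $\al_{ij;n}\in N_\mu$ means $\cC_{\mu+\al_{ij;n}}\neq 0$, which, since the support of $\cC$ is assumed to equal that of $V(\Lambda)$, holds precisely when $\mu+\al_{ij;n}$ lies in the support of $V(\Lambda)$. By Lemma~\ref{lem:simple-support} this in particular forces $\mu+\al_{ij;n}\leq\Lambda$, and writing $\mu+\al_{ij;n}=\Lambda-\sum_r b_r'\al_r$ the latter is exactly the condition that $b_r'\geq 0$ for every residue $r$.

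Next I would plug in Lemma~\ref{lem:number-boxes}. For $i<j$ it gives $b_r'=b_r-(n+1)$ when $r\in[i,j-1]$ and $b_r'=b_r-n$ otherwise, so the requirement that $b_r'\geq 0$ for all $r$ amounts to $n+1\leq b_r$ for $r\in[i,j-1]$ and $n\leq b_r$ for $r\notin[i,j-1]$, whose conjunction is exactly $n\leq b_{ij}$ by the definition of $b_{ij}$; the case $i>j$ is identical with $n-1$ in place of $n+1$ and the interval $[j,i-1]$. Since $\mu$ is itself in the support we have $b_r\geq 0$ for all $r$, so $b_{ij}\geq -1$ when $i<j$ and $b_{ij}\geq 0$ when $i>j$, matching the floor value ($-1$, resp.\ $0$) built into the definition of $k_{ij}^+$. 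Taking the maximum over all $n$ with $\al_{ij;n}\in N_\mu$ then yields $k_{ij}^+\leq b_{ij}$; combined with the symmetric bound $k_{ij}^-=k_{ji}^+\leq b_{ji}$ and \eqref{eq:Scopes-walls}, this also bounds which hyperplanes $h_i-h_j=nc$ can be Scopes walls.

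For the equality statement I would run the argument in reverse. If $\mu+\al_{ij;n}$ is dominant, then by Lemma~\ref{lem:simple-support} it already lies in the support of $V(\Lambda)$ as soon as $\mu+\al_{ij;n}\leq\Lambda$, so the computation above becomes an equivalence: $\al_{ij;n}\in N_\mu$ if and only if $n\leq b_{ij}$. Hence, under the hypothesis that the relevant weights $\mu+\al_{ij;n}$ are dominant, whenever $b_{ij}$ is large enough that $\al_{ij;n}$ with $n=b_{ij}$ is genuinely a root (that is, $b_{ij}\geq 0$ when $i<j$ and $b_{ij}\geq 1$ when $i>j$), that root belongs to $N_\mu$, so $k_{ij}^+\geq b_{ij}$; in the leftover degenerate cases $b_{ij}=-1$ (for $i<j$) or $b_{ij}=0$ (for $i>j$) no $\al_{ij;n}$ lies in $N_\mu$ at all, so $k_{ij}^+$ equals its floor, which is again $b_{ij}$. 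Either way $k_{ij}^+=b_{ij}$. I do not expect a genuine obstacle here: the only care needed is in keeping the cases $i<j$ and $i>j$ (with their floors $-1$ and $0$) apart, and in noticing that it is precisely the non-strict inequality $\mu+\al_{ij;n}\leq\Lambda$ that makes the whole family of box inequalities collapse to the single bound $n\leq b_{ij}$.
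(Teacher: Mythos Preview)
Your proof is correct and follows essentially the same approach as the paper: both arguments reduce the bound to the equivalence $n>b_{ij}\Leftrightarrow\mu+\al_{ij;n}\not\leq\Lambda$ via Lemma~\ref{lem:number-boxes}, and then invoke Lemma~\ref{lem:simple-support} for the converse under the dominance hypothesis. Your version is simply more explicit, in particular in tracking the floor values $-1$ and $0$ in the degenerate cases where no $\al_{ij;n}$ lies in $N_\mu$, which the paper leaves implicit.
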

\begin{proof}
  We have $n>b_{ij}$ if and only if $\mu+\al_{ij;n}\not \leq \Lambda$, so the corresponding weight space is 0.  If $\mu+\al_{ij;n}$ is dominant, we conversely have that the weight space is not 0 if and only if $\mu+\al_{ij;n}\leq \Lambda$.  
\end{proof}
The dominance condition is needed, as the level 1 case and Proposition \ref{prop:sl2hat} show.  


\subsubsection{Stretching and comparison with Rouquier blocks}
The description \eqref{eq:Scopes-walls} allows us to easily identify a RoCK alcove for each Weyl chamber: fix a dominant weight $\mu$  and a permutation $\sigma$, and consider the alcove where 
\[h_{\sigma^{-1}(i)}-h_{\sigma^{-1}(i+1)}\geq u_i \qquad \qquad i=1,\dots e-1\]
\[h_{\sigma^{-1}(e)}-h_{\sigma^{-1}(1)}\geq 1- \sum_{i=1}^{i-1} u_i \]
where $u_i=k^-_{\sigma^{-1}(i),\sigma^{-1}(i+1)}$.  
This is the image of the fundamental alcove under the action of translation by the vector 
\[v_{\sigma}=(\ell \sum_{i=1}^{e-1}u_i, \ell\sum_{i=2}^{e-1}u_i, \dots, 0)\]
followed by the permutation $\sigma^{-1}$.  Note that this might not be an element of the affine Weyl group, since the sum of the entries of $v_{\sigma}$ may not be divisible by $e$; one can correct this by precomposing with an appropriate power of the element of the extended affine Weyl group $\upsilon(h_1,\dots, h_e)=(h_e+1,h_1,\dots, h_{e-1})$.  Note that since we are considering the projection of these vectors onto the subspace $h_1+\dots+ h_e=0$, the $e$th power of this transformation is trivial.  This gives us a possibly different permutation $\sigma'$ and $v_{\sigma'}$.  We can avoid this if we only require that $u_i\geq k^-_{\sigma^{-1}(i),\sigma^{-1}(i+1)}$ and choose the $u_i$ so that the entries of $v_{\sigma}$ have sum divisible by $e$, though in this case we will not find the closest RoCK block to the fundamental alcove.  

Some care is needed here, because only the usual affine Weyl group, not the extended one, acts on the weights of an arbitrary representation, so $w^{-1}\mu$ would not make sense for $w$ in the extended affine Weyl group.  That is, if we act by an element of the extended affine Weyl group, the result will correspond to a block of a potentially different Ariki-Koike algebra where we have multiplied the roots by $q^s$ for some $s$.  

Assume that we choose $\mathbf{u}$ so that the sum of the entries of $v_{\sigma}$ is divisible by $e$.  In this case, the block of weight $w^{-1}\mu$  is a  ``stretching'' of the block for $\sigma\mu$. Stretching further can then be achieved by simply increasing the values of $u_i$; for sufficiently large values, this will clearly give a Rouquier block.  It is easy to get mixed up by the fact that we invert $w$: the alcoves $wA$ we consider above are translates of the tip of a Weyl chamber in a direction deep in that chamber, as shown in \eqref{eq:sl3}.  After inverting, this means that we have translated far in one fixed direction to obtain $t_1'<t_2'<\dots< t_e'$ in $(t_1',\dots, t_e')=w^{-1}(t_1,\dots, t_e)$, independent of which Weyl chamber we chose.  
 
 The Weyl chamber is encoded by the permutation $\sigma$, or put differently, by how we must permute the residues of $t_i\pmod\ell$ to match those of $t_i'$.  Note that the stabilizer $W_{\la}$ exactly reflects how many of these residues are the same, that is, the failure of this permutation to be unique.  In Lyle's indexing set, this is reflected in the choice of the vector $\mathbf{a}$ in \cite[\S 3.4]{lyleRouquier2022}.

Finally, let us compare with the definition of Rouquier blocks in the sense of \cite[\S 3.1]{lyleRouquier2022}.  According to this definition, a block is {\bf Rouquier} if for every $\ell$-partition corresponding to a module in the block consists of Rouquier partitions (i.e., those that correspond to an $\ell=1$ RoCK block).  
\begin{proposition}
  Every Rouquier block is a RoCK block in the sense of Definition \ref{def:our-RoCK}, and every RoCK block is Scopes equivalent to a Rouquier block.
\end{proposition}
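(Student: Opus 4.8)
The plan is to split the statement into two independent claims and then recombine them with the results of Section~\ref{sec:Scopes-CR}. Write $\nu=w^{-1}\mu$ with $\mu$ dominant and $w$ a shortest right coset representative for $W_\mu$, so that $\cC_\nu$ corresponds to the alcove $wA$ and, since $\fg=\slehat$, the Scopes walls are the hyperplanes of \eqref{eq:Scopes-walls}. The two claims are: \textbf{(I)} if $\cC_\nu$ is a Rouquier block in the sense of \cite[\S 3.1]{lyleRouquier2022}, then $wA$ lies in a RoCK Scopes chamber, so $\cC_\nu$ is RoCK in the sense of Definition~\ref{def:our-RoCK}; and \textbf{(II)} for every finite Weyl chamber $\mathfrak{c}$ for which $\cC_\nu$ is RoCK, the RoCK chamber for $\mathfrak{c}$ also contains an alcove $w'A$ whose block $\cC_{(w')^{-1}\mu}$ is Rouquier. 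Granting these, the first assertion of the Proposition is exactly (I); and for the second, if $\cC_\nu$ is RoCK then it is RoCK for some $\mathfrak{c}$, the block $\cC_{(w')^{-1}\mu}$ of (II) lies in the same (namely the RoCK-for-$\mathfrak{c}$) Scopes chamber as $wA$, so Lemma~\ref{lem:same-scopes}(2) produces a sequence $w'=ws_{i_1}\cdots s_{i_p}$ keeping every intermediate alcove in that chamber, and Lemma~\ref{lem:Chuang-Rouquier} turns each step into a $t$-exact Chuang--Rouquier equivalence; composing them shows $\cC_\nu$ and $\cC_{(w')^{-1}\mu}$ are Scopes equivalent.

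For claim (I) I would unwind Lyle's definition through the abacus combinatorics of \cite[\S 2.2]{lyleRouquier2022}. Requiring that every $\ell$-partition in the block have all $\ell$ components equal to Rouquier partitions forces, for each component, that the runners of its $e$-runner abacus are sorted in a fixed cyclic order with vertical gaps at least the level-$1$ RoCK threshold attached to that component's $e$-weight. Since the $\slehat$-action never moves beads between the abaci of different components, the lemma expressing $k^{\pm}_{ij}$ as the largest vertical gap realised between a bead on the $i$th runner and an empty slot on the $j$th runner among multipartitions of $\mu$ shows that this Rouquier hypothesis bounds each $k^{\pm}_{ij}$ by the corresponding per-component threshold; simultaneously it forces the total bead counts $t'_r=(w^{-1}(t_1,\dots,t_e))_r$ to be monotone for the sorting permutation $\sigma$, with the successive differences $t'_{\sigma^{-1}(i)}-t'_{\sigma^{-1}(i+1)}$ dominating those bounds. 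Comparing with \eqref{eq:Scopes-walls}, these are precisely the inequalities cutting out the RoCK chamber for the Weyl chamber $\mathfrak{c}_\sigma$, so $wA$ is RoCK for $\mathfrak{c}_\sigma$.

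For claim (II) I would invoke the stretching construction of the preceding subsection: given $\mathfrak{c}$, let $\sigma$ be the associated permutation and choose $u_i\geq k^{-}_{\sigma^{-1}(i),\sigma^{-1}(i+1)}$, adjusting them by multiples of $\ell$ so that the entries of $v_\sigma$ sum to a multiple of $e$ (so $v_\sigma$ genuinely comes from the affine Weyl group); the resulting $w'$ has $w'A$ in the RoCK chamber for $\mathfrak{c}$ by \eqref{eq:Scopes-walls}. It then remains to check that, for $u_i$ large enough, the block $\cC_{(w')^{-1}\mu}$ is Rouquier: translating $\mu$ by $v_\sigma$ spreads the runners of each component abacus of every multipartition in the block apart by amounts that grow with the $u_i$, and once these exceed the relevant level-$1$ RoCK thresholds — a finite set of bounds, controlled uniformly over the block by Corollary~\ref{cor:bound-1} and Lemma~\ref{lem:number-boxes} — every component is a Rouquier partition. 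Since arbitrarily large admissible $u_i$ exist, (II) follows.

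I expect the main obstacle to be the bookkeeping in claim (I): one must carefully translate Lyle's per-component Rouquier condition on the multipartitions of a block into the global inequalities on $\mu$ and $w$, matching the per-component level-$1$ RoCK bounds against the statistics $k^{\pm}_{ij}$ and verifying the monotonicity of the $t'_r$. The corresponding step in (II) — that sufficiently large stretching makes every component of every multipartition in the block Rouquier — is easier, but still requires extracting a uniform threshold valid across the finitely many $e$-weights occurring in a fixed block.
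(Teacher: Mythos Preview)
Your decomposition into (I) and (II) is sensible, and your treatment of (II) essentially matches the paper's. The gap is in claim (I), where your sketch conflates two things the paper explicitly warns are easy to confuse. First, you invoke the lemma describing $k^{\pm}_{ij}$ as the maximal vertical gap ``among multipartitions of $\mu$'' and then claim the Rouquier hypothesis bounds these numbers; but the Rouquier hypothesis is a condition on multipartitions of weight $\nu=w^{-1}\mu$, a different set entirely. The invariants $k^{\pm}_{ij}$ are fixed by the $W$-orbit (computed at the dominant $\mu$) and are not something a hypothesis on one particular block in that orbit can ``bound''; what you must control is the position of $wA$ relative to the walls they define. Second, you check inequalities on the bead counts $t'_r$, which are the $h$-coordinates of $\nu\in w^{-1}A$, and then assert these are ``precisely the inequalities cutting out the RoCK chamber''; but RoCKness is a condition on the alcove $wA$, not $w^{-1}A$. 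The paper flags exactly this pitfall in the stretching subsection (``It is easy to get mixed up by the fact that we invert $w$''): all RoCK blocks, regardless of Weyl chamber, have $t'_1<\cdots<t'_e$, so the permutation $\sigma$ does not sort the $t'_r$ as your sketch suggests.

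The paper sidesteps both issues by not attempting to verify the RoCK inequalities directly from the Rouquier condition. Instead it shows, citing \cite[Thms.~3.10, 3.20, 3.21]{lyleRouquier2022}, that stretching a Rouquier block by $M_1\ll\cdots\ll M_e$ with $\sum M_i=0$ is realised as a product of simple reflections each of which satisfies the bead-pushing hypothesis of Lemma~\ref{lem:Chuang-Rouquier}; hence stretching preserves the Scopes class. Since a sufficiently stretched block is RoCK by~\eqref{eq:Scopes-walls}, the original Rouquier block already lies in a RoCK Scopes chamber. This is really your argument for (II) run in the opposite direction, and it never requires comparing abacus data at $\nu$ against wall invariants computed at $\mu$.
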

\begin{proof}
{\bf Stretching a Rouquier block does not change its Scopes class}:  If we stretch a Rouquier block by a vector $\mathbf{M}$ satisfying $M_1\ll M_2\ll \cdots \ll M_e$ with $\sum M_i=0$, then \cite[Th. 3.20 \& 3.21]{lyleRouquier2022} constructs a decomposition equivalence between the original block and the shifted block (which is Rouquier).  This is achieved by studying an element of the extended affine Weyl group that sends the original block to the stretched one, following the recipe of \cite[Lem. 3.8]{lyleRouquier2022}.
For a general choice of $M_i$, this would be the action of an element of the extended affine Weyl group, but the fact that $\sum_iM_i=0$ guarantees that it is a product of reflections in the usual affine Weyl group, i.e. the total number of times we have applied $\upsilon$ sums to 0, and so we can commute them past the reflections in the finite Weyl group to get a reduced word in the affine Weyl group.  Each of these affine reflections satisfies the hypotheses of \cite[Thm. 3.10]{lyleRouquier2022}.  That is, when we compare the two runners we will switch, the left one has so many more beads than the right one that it is impossible to shift a bead leftward, i.e. the corresponding simple is highest weight for this root $\mathfrak{sl}_2$.

Since this holds for all simples in the block, hypothesis (2) of Lemma \ref{lem:Chuang-Rouquier} holds, and we can conclude that we never change the Scopes class of this block.  

{\bf A generic stretching is RoCK}:
If we take any block and stretch it by $M_1\ll M_2\ll \cdots \ll M_e$ for sufficiently large parameters, then it will be RoCK by \eqref{eq:Scopes-walls}.  

{\bf Rouquier $\Rightarrow$ RoCK}: Consider any Rouquier block.  As discussed, we can stretch it by $M_1\ll M_2\ll \cdots \ll M_e$  and preserve the Scopes equivalence class.  This means that its Scopes equivalence class contains a generic stretching and is RoCK.  

{\bf RoCK $\Rightarrow$ Rouquier}: We discussed above how with appropriate choice of $u_i$, we can construct a RoCK block for any Weyl chamber which is Rouquier.  
\end{proof}

\subsubsection{Computer calculations} For higher rank, let us illustrate with an example of how to use \href{https://cocalc.com/share/public_paths/684c1deddca6be23d9d830298743118c07c493b6}{our Sage program}.  We will reproduce the example of \cite[\S 3.1]{lyleRouquier2022}.  In this case, the quantum characteristic is $e=4$ and the multicharge $(2,0)$, and one can calculate that the 2-partition $\boldsymbol{\la}$ for this example is \[((10, 7, 6, 5, 5, 3, 3, 1, 1, 1), (16, 13, 10, 7, 7, 5, 5, 3, 3, 3, 2, 2, 2, 1, 1, 1)).\] We enter this into Sage, and use the existing functionality to find the block for this 2-partition: 
\begin{lstlisting}
sage: e=4  
sage: multicharge=(2,0) 
sage: lam=PartitionTuple([[10, 7, 6, 5, 5, 3, 3, 1, 1, 1], [16, 13, 10, 7, 7, 5, 5, 3, 3, 3, 2, 2, 2, 1, 1, 1]])
sage: lam.block(e,multicharge)                                                                                                                                                        
{2: 34, 3: 32, 0: 25, 1: 32}
\end{lstlisting}
This tells us that $\mu=\Lambda_4+\Lambda_2-32\al_1-34\al_2-32\al_3-25\al_4$ (though note that the values are out of order).  We can now set the block equal to this value and find the corresponding dominant weight.  The first sequence returned is a reduced word of a minimal length $w\in W$ such that $w\mu$ is dominant, so in this case, \[w=s_1s_2s_1s_3s_4s_1s_2s_1s_4s_3s_4s_1s_2s_1s_4s_3s_4s_2s_1s_4s_3s_4s_2.\]
\begin{lstlisting}
sage: al={2: 34, 3: 32, 0: 25, 1: 32}                                                                                                                                                 
sage: find_dominant(al,e,multicharge)                                                                                                                                                 
[[1, 2, 1, 3, 0, 1, 2, 1, 0, 3, 0, 1, 2, 1, 0, 3, 0, 2, 1, 0, 3, 0, 2],
 {2: 3, 3: 3, 0: 3, 1: 3}]
\end{lstlisting} 
Alternatively, we can do step (1) of the algorithm from Section \ref{sec:irreducible} directly using the function  \verb|find_dom_w_chamber|, which returns the $h$-coordinates of a point in $wA$ in addition to the weight $w\mu$:  
\begin{lstlisting}
sage: al={2: 34, 3: 32, 0: 25, 1: 32}                                                                                                                                                 
sage: find_dom_w_chamber(al,e,multicharge)                                                                                                                                            
[[3/2, -3, 15/4, -3/4], {2: 3, 3: 3, 0: 3, 1: 3}]
\end{lstlisting}
The computer can now find the set $N_{\mu}$ using the function \verb|findN|:
\begin{lstlisting}
sage: dom=find_dom_w_chamber(al,e,multicharge)[1]                                                                                                                                     
sage: dom                                                                                                                                                                             
{2: 3, 3: 3, 0: 3, 1: 3}
sage: findN(dom,e,multicharge)                                                                                                                                                        
{(2, 1): 2, (3, 1): 3, (4, 1): 3, (1, 2): 2, (3, 2): 3, 
 (4, 2): 3, (1, 3): 2, (2, 3): 2, (4, 3): 2, (1, 4): 2,
 (2, 4): 2, (3, 4): 2}
\end{lstlisting}
The output here is a dictionary, associating the value $k_{ij}^+=k_{ji}^-$ to the key \verb|(i,j)|.  To find the Scopes chamber of this block, one checks the sign of all the corresponding coroots on the weight $\gamma=(1,3/2, -3, 15/4, -3/4,0)$, or generally on the alcove containing this point, which is defined by the inequalities $h_1\geq h_4+2\geq h_2+4\geq h_3-2\geq h_1-1$.  As we calculated above, we have $\{\al_{41;3},\al_{41;2},\al_{41;1},\al_{14;0},\al_{14;1},\al_{14;2}\}\subset N_{\mu}$, and so we have Scopes walls at 
\[h_1-h_4=m \qquad m=-2,-1,0,1,2,3\]
 This tells us that in the Scopes chamber containing the point $\gamma$, we have $2<h_1-h_4<3$, and this block is not RoCK.  

It is not necessary to perform these computations by hand; the full algorithm is automated by the function \verb|test_RoCK|:
\begin{lstlisting}
sage: al={2: 34, 3: 32, 0: 25, 1: 32}                                                                                                                                                 
sage: test_RoCK(al,e,multicharge)
False
\end{lstlisting}
There is also a version of this function with a verbose output that points out which Scopes walls witness the failure to be RoCK:
\begin{lstlisting}
sage: test_RoCK_verbose(al,e,multicharge)                                                                                                                                             
The pair [1, 2]  is OK since  9/2  is not between  2 and -2.
The pair [1, 3]  is OK since  -9/4  is not between  3 and -2.
The pair [2, 3]  is OK since  -27/4  is not between  3 and -2.
There's a problem with  [1, 4]  since  3 > 9/4 > -2.
The pair [2, 4]  is OK since  -9/4  is not between  3 and -2.
The pair [3, 4]  is OK since  9/2  is not between  2 and -2.
False
\end{lstlisting}
If you want to construct RoCK blocks, then the function \verb|RoCK_weight| takes in an alcove (in the form of a point in the alcove) and a fixed block and outputs the closest alcove in the same Weyl chamber which is RoCK for the orbit of the block we fixed.
\begin{lstlisting}
sage: Ral=RoCK_weight([3/2, -3, 15/4, -3/4],al,e,multicharge)                                                                                                                         
sage: Ral                                                                                                                                                                             
{0: 45, 1: 42, 2: 34, 3: 42}
sage: test_RoCK_verbose(Ral,e,multicharge)                                                                                                                                            
The pair [1, 2]  is OK since  11/2  is not between  2 and -2.
The pair [1, 3]  is OK since  -9/4  is not between  3 and -2.
The pair [2, 3]  is OK since  -31/4  is not between  3 and -2.
The pair [1, 4]  is OK since  13/4  is not between  3 and -2.
The pair [2, 4]  is OK since  -9/4  is not between  3 and -2.
The pair [3, 4]  is OK since  11/2  is not between  2 and -2.
True
sage: weight_from_block(Ral,e,multicharge, 9)                                                                                                                                         
[[13, 18, 1, 6], [10, 10, 9, 9]]
\end{lstlisting}
Note that we used the function \verb|weight_from_block| to find the value of $(t_1,t_2,t_3,t_4)$ for this block.  Finally, we can find an example of a RoCK block for each Weyl chamber:
\begin{lstlisting}
sage: all_RoCKs(al,e,multicharge)                                                                                                                                                     
{(3/4, 1/2, 1/4, 0): {0: 37, 1: 34, 2: 27, 3: 34},
 (3/4, 1/2, 0, 1/4): {0: 37, 1: 34, 2: 27, 3: 34},
 (3/4, 1/4, 1/2, 0): {0: 34, 1: 42, 2: 45, 3: 42},
 (3/4, 1/4, 0, 1/2): {0: 34, 1: 42, 2: 45, 3: 42},
 (3/4, 0, 1/2, 1/4): {0: 37, 1: 29, 2: 37, 3: 39},
 (3/4, 0, 1/4, 1/2): {0: 37, 1: 29, 2: 37, 3: 39},
 (1/2, 3/4, 1/4, 0): {0: 37, 1: 34, 2: 27, 3: 34},
 (1/2, 3/4, 0, 1/4): {0: 37, 1: 34, 2: 27, 3: 34},
 (1/2, 1/4, 3/4, 0): {0: 37, 1: 39, 2: 37, 3: 29},
 (1/2, 1/4, 0, 3/4): {0: 37, 1: 39, 2: 37, 3: 29},
 (1/2, 0, 3/4, 1/4): {0: 45, 1: 42, 2: 34, 3: 42},
 (1/2, 0, 1/4, 3/4): {0: 45, 1: 42, 2: 34, 3: 42},
 (1/4, 3/4, 1/2, 0): {0: 34, 1: 42, 2: 45, 3: 42},
 (1/4, 3/4, 0, 1/2): {0: 34, 1: 42, 2: 45, 3: 42},
 (1/4, 1/2, 3/4, 0): {0: 37, 1: 39, 2: 37, 3: 29},
 (1/4, 1/2, 0, 3/4): {0: 37, 1: 39, 2: 37, 3: 29},
 (1/4, 0, 3/4, 1/2): {0: 27, 1: 34, 2: 37, 3: 34},
 (1/4, 0, 1/2, 3/4): {0: 27, 1: 34, 2: 37, 3: 34},
 (0, 3/4, 1/2, 1/4): {0: 37, 1: 29, 2: 37, 3: 39},
 (0, 3/4, 1/4, 1/2): {0: 37, 1: 29, 2: 37, 3: 39},
 (0, 1/2, 3/4, 1/4): {0: 45, 1: 42, 2: 34, 3: 42},
 (0, 1/2, 1/4, 3/4): {0: 45, 1: 42, 2: 34, 3: 42},
 (0, 1/4, 3/4, 1/2): {0: 27, 1: 34, 2: 37, 3: 34},
 (0, 1/4, 1/2, 3/4): {0: 27, 1: 34, 2: 37, 3: 34}}
\end{lstlisting}
Note that we see each block repeated 4 times; these correspond to the orbits under the order 4 stabilizer of the highest weight $\Lambda$ generated by $s_1$ and $s_3$.

Now, let us consider the example of \eqref{eq:sl3}:
\begin{lstlisting}
sage: e=3 
sage: multicharge=(0,0,1,2) 
sage: al={0:3,1:2,2:2}
sage: all_RoCKs(al,e,multicharge)
{(2/3, 1/3, 0): {0: 11, 1: 18, 2: 18},
 (2/3, 0, 1/3): {0: 23, 1: 22, 2: 14},
 (1/3, 2/3, 0): {0: 23, 1: 14, 2: 22},
 (1/3, 0, 2/3): {0: 23, 1: 22, 2: 14},
 (0, 2/3, 1/3): {0: 23, 1: 14, 2: 22},
 (0, 1/3, 2/3): {0: 11, 1: 18, 2: 18}}
\end{lstlisting}
Again, note that the blocks appear in pairs due to the symmetry with respect to $s_3$.  The chambers in the dominant Weyl chamber for $W_{\mu}$ are those with $h_3\geq h_1$, so the last three entries.  These can be identified with the red triangles of \eqref{eq:sl3} by considering which simple roots are negative on each chamber: $\al_2$ on the first chamber, $\al_1$ on the second, and both on the third.  

To find the actual underlying blocks, we apply to each block the function \verb|weight_from_block| which tells us the values of $t_i$ for each block.  Note that the last input simply shifts all the values simultaneously, which we have done to keep the charge of each runner positive.  
\begin{lstlisting}
sage: weight_from_block({0: 3, 1: 2, 2: 2},e,multicharge,11)
[[14, 12, 10], [13, 12, 11]]
sage: weight_from_block({0: 11, 1: 18, 2: 18},e,multicharge,11)
[[6, 12, 18], [13, 12, 11]]
sage: weight_from_block({0: 23, 1: 22, 2: 14},e,multicharge,11)
[[14, 20, 2], [13, 12, 11]]
sage: weight_from_block({0: 23, 1: 14, 2: 22},e,multicharge,11)
[[22, 4, 10], [13, 12, 11]]
\end{lstlisting}
Thus, one example of a multipartition in the dominant block is given by $((1,1),(2,1),(1,1),\emptyset)$, whose abacus we show below:

\begin{center}
\begin{tikzpicture}\tikzset{yscale=0.3,xscale=0.3}
\foreach \k in {0,1,2} {
\fill [color=brown] (\k-.1,0)--(\k-.1,7) -- (\k+0.1,7)-- (\k+.1,-0)--(\k-.1,-0);
\fill[color=brown](\k-.1,7.1)--(\k+.1,7.1)--(\k+.1,7.3)--(\k-.1,7.3)--(\k-.1,7.1);
\fill[color=brown](\k-.1,-0.1)--(\k+.1,-0.1)--(\k+.1,-0.3)--(\k-.1,-0.3)--(\k-.1,-0.3);
\fill[color=brown](\k-.1,-0.4)--(\k+.1,-0.4)--(\k+.1,-0.6)--(\k-.1,-0.6)--(\k-.1,-0.6);
}
\foreach \k in {3.5,4.5,5.5,6.5} {
\fill[color=bead] (0,\k) circle (10pt);}
\foreach \k in {4.5,5.5,6.5} {
\fill[color=bead] (2,\k) circle (10pt);}
\foreach \k in {5.5,6.5} {
\fill[color=bead] (1,\k) circle (10pt);}
\draw(-.5,7)--(2.5,7); 
\foreach \a in {0,1,2} {
\foreach \k in {.5,...,6.5}{
\node at (\a,\k)[color=bead]{$-$};};};
\end{tikzpicture}
\; \,
\begin{tikzpicture}\tikzset{yscale=0.3,xscale=0.3}
\foreach \k in {0,1,2} {
\fill [color=brown] (\k-.1,0)--(\k-.1,7) -- (\k+0.1,7)-- (\k+.1,0)--(\k-.1,0);
\fill[color=brown](\k-.1,7.1)--(\k+.1,7.1)--(\k+.1,7.3)--(\k-.1,7.3)--(\k-.1,7.1);
\fill[color=brown](\k-.1,-0.1)--(\k+.1,-0.1)--(\k+.1,-0.3)--(\k-.1,-0.3)--(\k-.1,-0.3);
\fill[color=brown](\k-.1,-0.4)--(\k+.1,-0.4)--(\k+.1,-0.6)--(\k-.1,-0.6)--(\k-.1,-0.6);
}
\foreach \k in {4.5,5.5,6.5} {
\fill[color=bead] (0,\k) circle (10pt);}
\foreach \k in {4.5,5.5,6.5} {
\fill[color=bead] (2,\k) circle (10pt);}
\foreach \k in {3.5,5.5,6.5} {
\fill[color=bead] (1,\k) circle (10pt);}
\draw(-.5,7)--(2.5,7); 
\foreach \a in {0,1,2} {
\foreach \k in {.5,...,6.5}{
\node at (\a,\k)[color=bead]{$-$};};};
\end{tikzpicture}\; \,
\begin{tikzpicture}\tikzset{yscale=0.3,xscale=0.3}
\foreach \k in {0,1,2} {
\fill [color=brown] (\k-.1,0)--(\k-.1,7) -- (\k+0.1,7)-- (\k+.1,0)--(\k-.1,0);
\fill[color=brown](\k-.1,7.1)--(\k+.1,7.1)--(\k+.1,7.3)--(\k-.1,7.3)--(\k-.1,7.1);
\fill[color=brown](\k-.1,-0.1)--(\k+.1,-0.1)--(\k+.1,-0.3)--(\k-.1,-0.3)--(\k-.1,-0.3);
\fill[color=brown](\k-.1,-0.4)--(\k+.1,-0.4)--(\k+.1,-0.6)--(\k-.1,-0.6)--(\k-.1,-0.6);
}
\foreach \k in {3.5,4.5,5.5,6.5} {
\fill[color=bead] (0,\k) circle (10pt);}
\foreach \k in {5.5,6.5} {
\fill[color=bead] (2,\k) circle (10pt);}
\foreach \k in {3.5,4.5, 5.5,6.5} {
\fill[color=bead] (1,\k) circle (10pt);}
\draw(-.5,7)--(2.5,7); 
\foreach \a in {0,1,2} {
\foreach \k in {.5,...,6.5}{
\node at (\a,\k)[color=bead]{$-$};};};
\end{tikzpicture}
\; \,
\begin{tikzpicture}\tikzset{yscale=0.3,xscale=0.3}
\foreach \k in {0,1,2} {
\fill [color=brown] (\k-.1,0)--(\k-.1,7) -- (\k+0.1,7)-- (\k+.1,0)--(\k-.1,0);
\fill[color=brown](\k-.1,7.1)--(\k+.1,7.1)--(\k+.1,7.3)--(\k-.1,7.3)--(\k-.1,7.1);
\fill[color=brown](\k-.1,-0.1)--(\k+.1,-0.1)--(\k+.1,-0.3)--(\k-.1,-0.3)--(\k-.1,-0.3);
\fill[color=brown](\k-.1,-0.4)--(\k+.1,-0.4)--(\k+.1,-0.6)--(\k-.1,-0.6)--(\k-.1,-0.6);
}
\foreach \k in {4.5,5.5,6.5} {
\fill[color=bead] (0,\k) circle (10pt);}
\foreach \k in {5.5,6.5} {
\fill[color=bead] (2,\k) circle (10pt);}
\foreach \k in {5.5,6.5} {
\fill[color=bead] (1,\k) circle (10pt);}
\draw(-.5,7)--(2.5,7); 
\foreach \a in {0,1,2} {
\foreach \k in {.5,...,6.5}{
\node at (\a,\k)[color=bead]{$-$};};};
\end{tikzpicture}
\end{center}
The corresponding objects in the different RoCK blocks above are given by 
\definecolor{bead}{gray}{0.2}
\begin{center}
\begin{tikzpicture}\tikzset{yscale=0.3,xscale=0.3}
\foreach \k in {0,1,2} {
\fill [color=brown] (\k-.1,0)--(\k-.1,7) -- (\k+0.1,7)-- (\k+.1,0)--(\k-.1,0);
\fill[color=brown](\k-.1,7.1)--(\k+.1,7.1)--(\k+.1,7.3)--(\k-.1,7.3)--(\k-.1,7.1);
\fill[color=brown](\k-.1,-0.1)--(\k+.1,-0.1)--(\k+.1,-0.3)--(\k-.1,-0.3)--(\k-.1,-0.3);
\fill[color=brown](\k-.1,-0.4)--(\k+.1,-0.4)--(\k+.1,-0.6)--(\k-.1,-0.6)--(\k-.1,-0.6);
}
\foreach \k in {2.5,3.5,4.5,5.5,6.5} {
\fill[color=bead] (2,\k) circle (10pt);}
\foreach \k in {5.5,6.5} {
\fill[color=bead] (0,\k) circle (10pt);}
\foreach \k in {5.5,6.5} {
\fill[color=bead] (1,\k) circle (10pt);}
\draw(-.5,7)--(2.5,7); 
\foreach \a in {0,1,2} {
\foreach \k in {.5,...,6.5}{
\node at (\a,\k)[color=bead]{$-$};};};
\end{tikzpicture}
\; \,
\begin{tikzpicture}\tikzset{yscale=0.3,xscale=0.3}
\foreach \k in {0,1,2} {
\fill [color=brown] (\k-.1,0)--(\k-.1,7) -- (\k+0.1,7)-- (\k+.1,0)--(\k-.1,0);
\fill[color=brown](\k-.1,7.1)--(\k+.1,7.1)--(\k+.1,7.3)--(\k-.1,7.3)--(\k-.1,7.1);
\fill[color=brown](\k-.1,-0.1)--(\k+.1,-0.1)--(\k+.1,-0.3)--(\k-.1,-0.3)--(\k-.1,-0.3);
\fill[color=brown](\k-.1,-0.4)--(\k+.1,-0.4)--(\k+.1,-0.6)--(\k-.1,-0.6)--(\k-.1,-0.6);
}
\foreach \k in {5.5,6.5} {
\fill[color=bead] (0,\k) circle (10pt);}
\foreach \k in {3.5,4.5,5.5,6.5} {
\fill[color=bead] (2,\k) circle (10pt);}
\foreach \k in {3.5,5.5,6.5} {
\fill[color=bead] (1,\k) circle (10pt);}
\draw(-.5,7)--(2.5,7); 
\foreach \a in {0,1,2} {
\foreach \k in {.5,...,6.5}{
\node at (\a,\k)[color=bead]{$-$};};};
\end{tikzpicture}\; \,
\begin{tikzpicture}\tikzset{yscale=0.3,xscale=0.3}
\foreach \k in {0,1,2} {
\fill [color=brown] (\k-.1,0)--(\k-.1,7) -- (\k+0.1,7)-- (\k+.1,0)--(\k-.1,0);
\fill[color=brown](\k-.1,7.1)--(\k+.1,7.1)--(\k+.1,7.3)--(\k-.1,7.3)--(\k-.1,7.1);
\fill[color=brown](\k-.1,-0.1)--(\k+.1,-0.1)--(\k+.1,-0.3)--(\k-.1,-0.3)--(\k-.1,-0.3);
\fill[color=brown](\k-.1,-0.4)--(\k+.1,-0.4)--(\k+.1,-0.6)--(\k-.1,-0.6)--(\k-.1,-0.6);
}
\foreach \k in {5.5,6.5,2.5,3.5,4.5} {
\fill[color=bead] (2,\k) circle (10pt);}
\foreach \k in {3.5,4.5, 5.5,6.5} {
\fill[color=bead] (1,\k) circle (10pt);}
\foreach \k in {6.5} {
\fill[color=bead] (0,\k) circle (10pt);}
\draw(-.5,7)--(2.5,7); 
\foreach \a in {0,1,2} {
\foreach \k in {.5,...,6.5}{
\node at (\a,\k)[color=bead]{$-$};};};
\end{tikzpicture}
\; \,
\begin{tikzpicture}\tikzset{yscale=0.3,xscale=0.3}
\foreach \k in {0,1,2} {
\fill [color=brown] (\k-.1,0)--(\k-.1,7) -- (\k+0.1,7)-- (\k+.1,0)--(\k-.1,0);
\fill[color=brown](\k-.1,7.1)--(\k+.1,7.1)--(\k+.1,7.3)--(\k-.1,7.3)--(\k-.1,7.1);
\fill[color=brown](\k-.1,-0.1)--(\k+.1,-0.1)--(\k+.1,-0.3)--(\k-.1,-0.3)--(\k-.1,-0.3);
\fill[color=brown](\k-.1,-0.4)--(\k+.1,-0.4)--(\k+.1,-0.6)--(\k-.1,-0.6)--(\k-.1,-0.6);
}
\foreach \k in {3.5,4.5,5.5,6.5} {
\fill[color=bead] (2,\k) circle (10pt);}
\foreach \k in {5.5,6.5} {
\fill[color=bead] (1,\k) circle (10pt);}
\draw(-.5,7)--(2.5,7); 
\foreach \k in {6.5} {
\fill[color=bead] (0,\k) circle (10pt);}
\foreach \a in {0,1,2} {
\foreach \k in {.5,...,6.5}{
\node at (\a,\k)[color=bead]{$-$};};};
\end{tikzpicture}
\end{center}
\begin{center}
\begin{tikzpicture}\tikzset{yscale=0.3,xscale=0.3}
\foreach \k in {0,1,2} {
\fill [color=brown] (\k-.1,0)--(\k-.1,7) -- (\k+0.1,7)-- (\k+.1,0)--(\k-.1,0);
\fill[color=brown](\k-.1,7.1)--(\k+.1,7.1)--(\k+.1,7.3)--(\k-.1,7.3)--(\k-.1,7.1);
\fill[color=brown](\k-.1,-0.1)--(\k+.1,-0.1)--(\k+.1,-0.3)--(\k-.1,-0.3)--(\k-.1,-0.3);
\fill[color=brown](\k-.1,-0.4)--(\k+.1,-0.4)--(\k+.1,-0.6)--(\k-.1,-0.6)--(\k-.1,-0.6);
}
\foreach \k in {3.5,4.5,5.5,6.5} {
\fill[color=bead] (0,\k) circle (10pt);}
\foreach \k in {6.5} {
\fill[color=bead] (2,\k) circle (10pt);}
\foreach \k in {3.5,4.5,5.5,6.5} {
\fill[color=bead] (1,\k) circle (10pt);}
\draw(-.5,7)--(2.5,7); 
\foreach \a in {0,1,2} {
\foreach \k in {.5,...,6.5}{
\node at (\a,\k)[color=bead]{$-$};};};
\end{tikzpicture}
\; \,
\begin{tikzpicture}\tikzset{yscale=0.3,xscale=0.3}
\foreach \k in {0,1,2} {
\fill [color=brown] (\k-.1,0)--(\k-.1,7) -- (\k+0.1,7)-- (\k+.1,0)--(\k-.1,0);
\fill[color=brown](\k-.1,7.1)--(\k+.1,7.1)--(\k+.1,7.3)--(\k-.1,7.3)--(\k-.1,7.1);
\fill[color=brown](\k-.1,-0.1)--(\k+.1,-0.1)--(\k+.1,-0.3)--(\k-.1,-0.3)--(\k-.1,-0.3);
\fill[color=brown](\k-.1,-0.4)--(\k+.1,-0.4)--(\k+.1,-0.6)--(\k-.1,-0.6)--(\k-.1,-0.6);
}
\foreach \k in {5.5,6.5,4.5} {
\fill[color=bead] (0,\k) circle (10pt);}
\foreach \k in {6.5} {
\fill[color=bead] (2,\k) circle (10pt);}
\foreach \k in {1.5,3.5,4.5,5.5,6.5} {
\fill[color=bead] (1,\k) circle (10pt);}
\draw(-.5,7)--(2.5,7); 
\foreach \a in {0,1,2} {
\foreach \k in {.5,...,6.5}{
\node at (\a,\k)[color=bead]{$-$};};};
\end{tikzpicture}\; \,
\begin{tikzpicture}\tikzset{yscale=0.3,xscale=0.3}
\foreach \k in {0,1,2} {
\fill [color=brown] (\k-.1,0)--(\k-.1,7) -- (\k+0.1,7)-- (\k+.1,0)--(\k-.1,0);
\fill[color=brown](\k-.1,7.1)--(\k+.1,7.1)--(\k+.1,7.3)--(\k-.1,7.3)--(\k-.1,7.1);
\fill[color=brown](\k-.1,-0.1)--(\k+.1,-0.1)--(\k+.1,-0.3)--(\k-.1,-0.3)--(\k-.1,-0.3);
\fill[color=brown](\k-.1,-0.4)--(\k+.1,-0.4)--(\k+.1,-0.6)--(\k-.1,-0.6)--(\k-.1,-0.6);
}
\foreach \k in {5.5,6.5,3.5,4.5} {
\fill[color=bead] (0,\k) circle (10pt);}
\foreach \k in {5.5,6.5} {
\fill[color=bead] (2,\k) circle (10pt);}
\foreach \k in {3.5,4.5, 5.5,6.5} {
\fill[color=bead] (1,\k) circle (10pt);}
\draw(-.5,7)--(2.5,7); 
\foreach \a in {0,1,2} {
\foreach \k in {.5,...,6.5}{
\node at (\a,\k)[color=bead]{$-$};};};
\end{tikzpicture}
\; \,
\begin{tikzpicture}\tikzset{yscale=0.3,xscale=0.3}
\foreach \k in {0,1,2} {
\fill [color=brown] (\k-.1,0)--(\k-.1,7) -- (\k+0.1,7)-- (\k+.1,0)--(\k-.1,0);
\fill[color=brown](\k-.1,7.1)--(\k+.1,7.1)--(\k+.1,7.3)--(\k-.1,7.3)--(\k-.1,7.1);
\fill[color=brown](\k-.1,-0.1)--(\k+.1,-0.1)--(\k+.1,-0.3)--(\k-.1,-0.3)--(\k-.1,-0.3);
\fill[color=brown](\k-.1,-0.4)--(\k+.1,-0.4)--(\k+.1,-0.6)--(\k-.1,-0.6)--(\k-.1,-0.6);
}
\foreach \k in {5.5,6.5} {
\fill[color=bead] (0,\k) circle (10pt);}
\foreach \k in {} {
\fill[color=bead] (2,\k) circle (10pt);}
\foreach \k in {3.5,4.5,5.5,6.5} {
\fill[color=bead] (1,\k) circle (10pt);}
\draw(-.5,7)--(2.5,7); 
\foreach \a in {0,1,2} {
\foreach \k in {.5,...,6.5}{
\node at (\a,\k)[color=bead]{$-$};};};
\end{tikzpicture}
\end{center}
\begin{center}
\begin{tikzpicture}\tikzset{yscale=0.3,xscale=0.3}
\foreach \k in {0,1,2} {
\fill [color=brown] (\k-.1,0)--(\k-.1,7) -- (\k+0.1,7)-- (\k+.1,0)--(\k-.1,0);
\fill[color=brown](\k-.1,7.1)--(\k+.1,7.1)--(\k+.1,7.3)--(\k-.1,7.3)--(\k-.1,7.1);
\fill[color=brown](\k-.1,-0.1)--(\k+.1,-0.1)--(\k+.1,-0.3)--(\k-.1,-0.3)--(\k-.1,-0.3);
\fill[color=brown](\k-.1,-0.4)--(\k+.1,-0.4)--(\k+.1,-0.6)--(\k-.1,-0.6)--(\k-.1,-0.6);
}
\foreach \k in {1.5,2.5,3.5,4.5,5.5,6.5} {
\fill[color=bead] (0,\k) circle (10pt);}
\foreach \k in {4.5,5.5,6.5} {
\fill[color=bead] (2,\k) circle (10pt);}
\foreach \k in {} {
\fill[color=bead] (1,\k) circle (10pt);}
\draw(-.5,7)--(2.5,7); 
\foreach \a in {0,1,2} {
\foreach \k in {.5,...,6.5}{
\node at (\a,\k)[color=bead]{$-$};};};
\end{tikzpicture}
\; \,
\begin{tikzpicture}\tikzset{yscale=0.3,xscale=0.3}
\foreach \k in {0,1,2} {
\fill [color=brown] (\k-.1,0)--(\k-.1,7) -- (\k+0.1,7)-- (\k+.1,0)--(\k-.1,0);
\fill[color=brown](\k-.1,7.1)--(\k+.1,7.1)--(\k+.1,7.3)--(\k-.1,7.3)--(\k-.1,7.1);
\fill[color=brown](\k-.1,-0.1)--(\k+.1,-0.1)--(\k+.1,-0.3)--(\k-.1,-0.3)--(\k-.1,-0.3);
\fill[color=brown](\k-.1,-0.4)--(\k+.1,-0.4)--(\k+.1,-0.6)--(\k-.1,-0.6)--(\k-.1,-0.6);
}
\foreach \k in {2.5,3.5,4.5,5.5,6.5} {
\fill[color=bead] (0,\k) circle (10pt);}
\foreach \k in {4.5,5.5,6.5} {
\fill[color=bead] (2,\k) circle (10pt);}
\foreach \k in {5.5} {
\fill[color=bead] (1,\k) circle (10pt);}
\draw(-.5,7)--(2.5,7); 
\foreach \a in {0,1,2} {
\foreach \k in {.5,...,6.5}{
\node at (\a,\k)[color=bead]{$-$};};};
\end{tikzpicture}\; \,
\begin{tikzpicture}\tikzset{yscale=0.3,xscale=0.3}
\foreach \k in {0,1,2} {
\fill [color=brown] (\k-.1,0)--(\k-.1,7) -- (\k+0.1,7)-- (\k+.1,0)--(\k-.1,0);
\fill[color=brown](\k-.1,7.1)--(\k+.1,7.1)--(\k+.1,7.3)--(\k-.1,7.3)--(\k-.1,7.1);
\fill[color=brown](\k-.1,-0.1)--(\k+.1,-0.1)--(\k+.1,-0.3)--(\k-.1,-0.3)--(\k-.1,-0.3);
\fill[color=brown](\k-.1,-0.4)--(\k+.1,-0.4)--(\k+.1,-0.6)--(\k-.1,-0.6)--(\k-.1,-0.6);
}
\foreach \k in {1.5,2.5,3.5,4.5,5.5,6.5} {
\fill[color=bead] (0,\k) circle (10pt);}
\foreach \k in {5.5,6.5} {
\fill[color=bead] (2,\k) circle (10pt);}
\foreach \k in { 5.5,6.5} {
\fill[color=bead] (1,\k) circle (10pt);}
\draw(-.5,7)--(2.5,7); 
\foreach \a in {0,1,2} {
\foreach \k in {.5,...,6.5}{
\node at (\a,\k)[color=bead]{$-$};};};
\end{tikzpicture}
\; \,
\begin{tikzpicture}\tikzset{yscale=0.3,xscale=0.3}
\foreach \k in {0,1,2} {
\fill [color=brown] (\k-.1,0)--(\k-.1,7) -- (\k+0.1,7)-- (\k+.1,0)--(\k-.1,0);
\fill[color=brown](\k-.1,7.1)--(\k+.1,7.1)--(\k+.1,7.3)--(\k-.1,7.3)--(\k-.1,7.1);
\fill[color=brown](\k-.1,-0.1)--(\k+.1,-0.1)--(\k+.1,-0.3)--(\k-.1,-0.3)--(\k-.1,-0.3);
\fill[color=brown](\k-.1,-0.4)--(\k+.1,-0.4)--(\k+.1,-0.6)--(\k-.1,-0.6)--(\k-.1,-0.6);
}
\foreach \k in {2.5,3.5,4.5,5.5,6.5} {
\fill[color=bead] (0,\k) circle (10pt);}
\foreach \k in {5.5,6.5} {
\fill[color=bead] (2,\k) circle (10pt);}
\foreach \k in {} {
\fill[color=bead] (1,\k) circle (10pt);}
\draw(-.5,7)--(2.5,7); 
\foreach \a in {0,1,2} {
\foreach \k in {.5,...,6.5}{
\node at (\a,\k)[color=bead]{$-$};};};
\end{tikzpicture}
\end{center}
One can check by hand that the first of these blocks is Rouquier in the sense of \cite{lyleRouquier2022}, while the second and third become Rouquier after applying $\upsilon$ or $\upsilon^2$; note that this changes the highest weight.  Alternatively, we can find a Rouquier block by applying Lemma \ref{lem:Chuang-Rouquier} for transformations in the finite Weyl group to go to the blocks with $(t_1,t_2,t_3)=(2,14,20), (4,10,22)$, respectively.  
{\renewcommand{\markboth}[2]{}
\printbibliography}
\end{document}